\documentclass{article}
\usepackage[utf8]{inputenc}      		% Required for simple use of ö,ä,ü
\usepackage{graphicx}					% Required for use of figures
\usepackage{textcomp}					% Required to write ° with \textdegree
\usepackage{amsmath}% Required for advanced formulas
\usepackage{amssymb}
\usepackage{siunitx}% Required for scientific notation of numbers (and more)
\usepackage{amsthm}
\usepackage{subcaption}					% Required for subfigures
\usepackage{lettrine}					% Required for large letters at beginning of chapter
\usepackage{sectsty}					% Required for centered section captions 
\usepackage{fancyhdr}					% Required for modified title page
\usepackage[export]{adjustbox}
\usepackage{xcolor}
\usepackage{geometry}
\usepackage{tcolorbox}

\usepackage{tabularx}
\usepackage{algorithm}
\usepackage{algpseudocode}
\usepackage[obeyspaces]{url}
\usepackage{hyperref}

\newcommand{\F}{\mathbb F}
\newcommand{\N}{\mathbb N}

\newcommand{\R}{\mathbb R}

\newcommand{\cA}{\mathcal{A}}

\newcommand{\cM}{\mathcal{M}}
\newcommand{\cN}{\mathcal{N}}
\newcommand{\cO}{\mathcal{O}}
\newcommand{\cP}{\mathcal{P}}
\newcommand{\cS}{\mathcal{S}}

\newcommand{\transpose}{{\rm{T}}}

\DeclareMathOperator*{\argmin}{arg\,min}
\DeclareMathOperator{\spn}{span}

\DeclareMathOperator{\sym}{\text{sym}}

\DeclareMathOperator{\St}{\text{St}}

\DeclareMathOperator*{\esssup}{ess\,sup}

\numberwithin{equation}{section} %Für ordentliche Numerierung der Gleichungen

\newcommand{\propnumber}{} % initialize
\newtheorem*{cor}{Corollary \propnumber}

\newtheorem{theorem}{Theorem}[section]
\newtheorem{definition}[theorem]{Definition}
\newtheorem{lemma}[theorem]{Lemma}

\newtheorem{proposition}[theorem]{Proposition}
\newtheorem{corollary}[theorem]{Corollary}

\theoremstyle{definition}

\newcommand{\vertiii}[1]{{\left\vert\kern-0.25ex\left\vert\kern-0.25ex\left\vert #1 
    \right\vert\kern-0.25ex\right\vert\kern-0.25ex\right\vert}}

\title{Kernel-based learning of manifold-to-manifold maps from
  scattered data}
\author{Daniel Fischer\\ Department of Mathematics\\ University of
  Bayreuth\\ 95440 Bayreuth \\ Germany\\daniel.fischer@uni-bayreuth.de
\and Holger Wendland\thanks{The work of HW has been supported by the DFG
  under  project no. 514588180.} \\ Department of Mathematics\\ University of
  Bayreuth\\ 95440 Bayreuth \\ Germany\\holger.wendland@uni-bayreuth.de}

\begin{document}

\maketitle

\begin{abstract}
   We describe and analyze new methods for approximating
   manifold-to-manifold maps using only scattered data information. 
   To this end, we first study kernel-based approximation
    methods for scalar-valued functions defined on a manifold and
    derive a new sampling inequality and error estimates for functions
    from Sobolev spaces. After that, these methods are combined with 
     a closest point projection to reconstruct manifold-to-manifold
     maps. The new methods are analyzed and error estimates are derived.
     Finally, numerical examples are given to verify the theoretical
     findings.

     \noindent{\bf Keywords:} kernel-based learning, scattered data, manifold-valued functions

\end{abstract}

%%%%%%%%%%%%%%%%%%%%%%%%%%%%%%%%%%%%%%%%%%%%%%%
\section{Introduction}
Interpolation and approximation  with positive definite kernels provides a powerful
framework for reconstructing a multivariate function 
$f : \Omega \to \R$
from known function values at a finite set of data sites scattered
throughout the domain of definition $\Omega\subseteq \R^d$. Since these
methods were first proposed, they have been generalized in various
directions.  
On the one hand, numerous techniques have been developed for defining
positive definite kernels on more specific domains of definition, such
as smooth manifolds $\cM$, thereby enabling the reconstruction
of functions $f : \cM \to \R$ (see, for example,
\cite{FW20, FW29, FW24, FW14}).  
On the other hand, the theory of positive definite kernels and the
associated reproducing kernel Hilbert spaces has been extended from
the scalar-valued setting to vector- and, more generally, Hilbert
space-valued functions, thus allowing the approximation of mappings $f
: \Omega \to W$, where $W$ is a Hilbert space \cite{ VVK1, VVK2, VVK3,
  GieslWendland}.  

Motivated by applications in numerous scientific and engineering
domains, recent years have seen a growing interest in approximation
methods for functions $f : \Omega \to \cN$, whose codomain
$\cN$ possesses the structure of a smooth manifold rather than
that of a vector space.  
Such mappings arise, for instance, in parametric model order reduction
\cite{ZimmermannOverview}, where the codomain typically is the Stiefel
manifold $\St(n,r)$ of column orthogonal matrices in $\R^{n\times r}$
or the Grassmann manifold $\text{Gr}(k,n)$ of all $k$-dimensional
subspaces of an $n$ dimensional linear space, in interpolation of
diffusion tensor images created from magnetic resonance imaging
\cite{PennecUebersichtSPD}, where the codomain is the manifold of
symmetric positive definite matrices, in the context of Cosserat-type
material models \cite{Neff_Cosserat}, where
$\cN=\R^3\times\text{SO}(3)$, or in crystallographic texture
analysis \cite{HielscherLippert}, where the codomain can be seen as
quotient $\text{SO}(3)/\cS$ with some finite symmetry group
$\cS$.  

Many of the proposed manifold-valued approximation techniques have
already been analyzed mathematically and can be broadly categorized
into three classes: 
\begin{enumerate}
    \item Projection-based methods \cite{GawlikLeokI, ProjectionFE,
      HielscherLippert}, where first an interpolant is constructed in
      the ambient space of an embedded submanifold, and the resulting
      values are subsequently projected onto the manifold by means of
      a suitable mapping; 
    \item Tangent space methods (Push-Interpolate-Pull methods)
      \cite{Wendland-??-2,ZimmermannDataProc, ZimmermannMultivariate}, where the
      given data are first mapped to a linear space - typically the
      tangent space at a chosen base point - followed by classical
      vector-valued interpolation and finally mapping back the
      reconstructed values to the manifold via an inverse
      transformation; 
    \item Riemannian mean-based methods
      \cite{OptimalAPriori_GeodesicFE, Grohs_SDA}, where the
      evaluation of the interpolant involves computing a weighted mean
      of function values in $\cN$ through nonlinear
      optimization. 
\end{enumerate}
There also exist several techniques that do not fall directly into any
of the aforementioned categories (e.g., \cite{Hardering_Splines}), or
that combine multiple of the above approaches
\cite{MultipleTangentSpaces}. 

With the exception of \cite{Grohs_SDA}, however, these methods require
the sample locations to have a certain structure. In most cases, it is
assumed that the function values are known on some type of regular
grid. Without such a grid structure, these methods cannot be
applied. Fortunately, in this situation the ideas from kernel-based
interpolation of scalar-valued functions on manifolds, vector-valued
kernel-based interpolation and manifold-valued approaches can be
naturally combined into a method for reconstructing functions between
manifolds. In this work, we derive such  methods, i.e., we address the
problem of reconstructing a manifold-valued function 
$f : \cM \to \cN,$ 
where $\cM$ and $\cN$ are compact smooth manifolds,
from known values $f(x_1), \dots, f(x_N) \in \cN$ at a set of
scattered data sites $X = \{x_1, \dots, x_N\} \subseteq \cM$. To
our knowledge, this problem has not been investigated in this form
until now. In principle, various kernel-based approximation techniques
could be combined with any of the aforementioned methods for
manifold-valued interpolation. However, we restrict our attention to
kernel-based interpolation (see \cite{Fuselier-Wright-12-1}) and
penalized least-squares approximation on embedded submanifolds $\cM$.
In both cases,  the kernel on $\cM$ is obtained by
restricting a positive definite kernel defined on the ambient space to
the submanifold. Moreover, we employ a projection approach to ensure
that the resulting interpolant indeed takes values in the target
manifold $\cN$. 
%For a comprehensive study of the combination
%of kernel-based approximation schemes with other techniques for
%manifold-valued interpolation (e.g., tangent space approaches and
%approximation via Riemannian means) we refer the interested reader to
%the forthcoming dissertation \cite{Dissertation}. 

The remainder of this article is organized as follows: In the next
section, we discuss kernel-based approximation methods for
scalar-valued functions, defined on some domain $\Omega\subseteq\R^d$.
 In the third section, we extend these results to
kernel-based approximations for scalar-valued functions defined on embedded
submanifolds. To this end, we first collect some necessary material on
embedded submanifolds in Subsection \ref{Submanifolds} and then state and prove
results on the norm-minimal interpolation and penalized least-squares
processes. While we can rely on previous work for the interpolation
operator, the results on the penalized least-squares approximation are
new in this context. Particularly, we discuss their approximation
properties if the measured data contain noise.
Section \ref{Together_Section} then contains the core of this paper,
where the kernel-based approximation methods are combined with a
closest point projection to define  manifold-to-manifold
approximations. Finally, in Section \ref{Numerics},
we numerically illustrate these results with two academic examples and a
real-world application from crystallography.

%%%%%%%%%%%%%%%%%%%%%%%%%%%%%%%%%%%%%%%%%%%%%%%%
\section{Kernel-based approximation}\label{Klassisch_Skalar}

Kernel-based learning or approximation is a well-established tool in
approximation theory and statistics, it comes in the form of scattered
data approximation with radial basis functions (see for example
\cite{Buhmann-00-1, Fasshauer-07-1, Wendland-05-1}), Gaussian processes (see for example
\cite{Rasmussen-Williams-06-1}) and support vector machines (see for
example \cite{Schoelkopf-Smola-02-1, Steinwart-Christmann-08-1}).

Usually, in this context, functions $f:\Omega\to\R$ that are defined
on some domain $\Omega\subseteq\R^d$ are learned or approximated using
only information $f(x_1),\ldots,f(x_N)$ measured at scattered data
locations $X=\{x_1,\ldots,x_N\}\subseteq\Omega$. In this section, we
will shortly review the material 
relevant to us. Here, we follow mainly \cite{Wendland-05-1}.
Kernel-based learning of functions $f:\Omega\to\R$ usually assumes
that the unknown function comes from a reproducing kernel Hilbert
space $H$. This is a Hilbert space of real-valued functions defined on
$\Omega$, possessing a kernel $K:\Omega\times\Omega\to\R$ satisfying
$K(\cdot,x)\in H$ for all $x\in\Omega$ and $f(x)=\langle f, K(\cdot,x)\rangle_H$
for all $x\in \Omega$ and  all $f\in H$. Obviously, this means that
point evaluations $\delta_x:H\to \R$, $x\mapsto f(x)$ are continuous
mappings and their Riesz representer is given by the kernel
$K(\cdot,x)$. The reproducing kernel of a Hilbert space is uniquely
determined by the Hilbert space and is positive (semi-)definite in the
sense that all the matrices $K(X,X) = (K(x_i,x_j))_{ij}\in\R^{N\times
  N}$ are positive (semi-)definite for any set $X=\{x_1,\ldots,x_N\}\subseteq\Omega$ of $N\in\N$
distinct points. 

\begin{definition}\label{approxOperators} 
 Let $H$ be a reproducing kernel Hilbert space of functions
 $f:\Omega\to\R$. Let $X=\{x_1,\ldots,x_N\}\subseteq\Omega$ be a set
 of pairwise distinct points.
 \begin{enumerate}
 \item The optimal recovery or  norm-minimal interpolant $s_0$
   is the solution of
   \[
   \min \left\{ \|s\|_H : s\in H \mbox{ with } s(x_i)=f(x_i), 1\le i\le N\right\}.
   \]
 \item Given a parameter $\lambda>0$, the penalized least-squares approximation
   $s_\lambda$ is the solution of
   \[
  \min\left\{ \sum_{i=1}^N |f(x_i)-s(x_i)|^2 + \lambda\|s\|_H^2 : s\in
  H \right\}.
  \]
 \end{enumerate}
\end{definition}
In the context of support vector machines, the least-squares loss
function $L(w,y)=|w-y|^2$, which is used in the definition of the
penalized least-squares problem is often replaced by other loss
functions. This is, in principle, also possible in this context, but
for simplicity, we will only deal with the  least-squares loss.

It is well-known that the second minimization problem always has a
unique solution, while the first one has a unique solution if the
reproducing kernel $K:\Omega\times\Omega\to\R$ is positive
definite.

\begin{lemma}\label{lem:kernelsolution}
Let $H$ be a reproducing kernel Hilbert space of functions
$f:\Omega\to\R$ with positive definite kernel
$K:\Omega\times\Omega\to\R$. Then the problems from
Definition \ref{approxOperators} have unique solutions $s_\lambda$,
$\lambda\ge 0$. They can be written as
\[
s_\lambda = \sum_{j=1}^N \alpha_j K(\cdot,x_j), \qquad \lambda\ge 0,
\]
where the coefficients $\alpha=(\alpha_1,\ldots,\alpha_N)^\transpose$
are the solution of the linear system
$
(K(X,X) + \lambda I) \alpha = f(X)
$
with  $f(X) = (f(x_1),\ldots, f(x_N))^\transpose$ . This defines
linear mappings $I_X, Q_{X,\lambda}: C(\Omega) \to
V_X:=\spn\{K(\cdot,x_1),\ldots, K(\cdot,x_N)\}$ by setting $I_Xf:=s_0$
and $Q_{X,\lambda}f:=s_\lambda$.
\end{lemma}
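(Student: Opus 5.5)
The plan is the standard representer-theorem argument, carried out with the orthogonal decomposition $H = V_X \oplus V_X^\perp$. First, by the reproducing property, $s \in V_X^\perp$ if and only if $s(x_i) = \langle s, K(\cdot,x_i)\rangle_H = 0$ for all $i$. So every $s\in H$ can be written as $s = s_X + s^\perp$ with $s_X\in V_X$ and $s^\perp$ vanishing on $X$. Then $s(x_i)=s_X(x_i)$ for all $i$, and $\|s\|_H^2=\|s_X\|_H^2+\|s^\perp\|_H^2$.

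For $\lambda>0$, the functional $J_\lambda(s)=\sum_i |f(x_i)-s(x_i)|^2+\lambda\|s\|_H^2$ therefore satisfies $J_\lambda(s)=J_\lambda(s_X)+\lambda\|s^\perp\|_H^2$. Hence any minimizer lies in $V_X$, and the problem reduces to minimizing over $s=\sum_j\alpha_jK(\cdot,x_j)$. Writing $A=K(X,X)$, we have $s(X)=A\alpha$ and $\|s\|_H^2=\alpha^\transpose A\alpha$. The reduced functional is
\[
g(\alpha)=\|f(X)-A\alpha\|_2^2+\lambda\,\alpha^\transpose A\alpha .
\]
Since $A$ is positive definite by assumption, $g$ is strictly convex, with Hessian $2(A^2+\lambda A)$ positive definite. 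Setting the gradient to zero gives $A\big((A+\lambda I)\alpha-f(X)\big)=0$. Because $A$ is invertible, this is equivalent to $(A+\lambda I)\alpha=f(X)$, and $A+\lambda I$ is invertible. Existence follows from strict convexity and coercivity of $g$, and uniqueness in $H$ follows since the minimizer must have $s^\perp=0$ and a unique $\alpha$. Alternatively, $J_\lambda$ is strictly convex on $H$ directly, because $\lambda\|\cdot\|_H^2$ is.

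For $\lambda=0$, existence of an interpolant comes from $A$ being invertible: $\alpha=A^{-1}f(X)$ gives $s_0\in V_X$ with $s_0(x_i)=f(x_i)$. For any other interpolant $s$, the difference $s-s_0$ vanishes on $X$, so $s-s_0\in V_X^\perp$ and $\|s\|_H^2=\|s_0\|_H^2+\|s-s_0\|_H^2$. Thus $s_0$ is the unique norm-minimal interpolant.

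Linearity of $I_X$ and $Q_{X,\lambda}$ on $C(\Omega)$ is then immediate. The map $f\mapsto f(X)$ is linear and only uses point values, so it is well defined on $C(\Omega)$ even though $f$ need not lie in $H$. It is followed by the fixed linear solve $(A+\lambda I)^{-1}$ and the linear map $\alpha\mapsto\sum_j\alpha_jK(\cdot,x_j)$.

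There is no real obstacle. The only point that needs care is the reduction to $V_X$ via the reproducing-property characterization of $V_X^\perp$, and noting that the data only enter through $f(X)$, so the minimization problems make sense for arbitrary continuous $f$.
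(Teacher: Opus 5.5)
Your proof is correct and complete. The orthogonal splitting $H=V_X\oplus V_X^\perp$ via the reproducing property, the reduction of both problems to the finite-dimensional quadratic in $\alpha$, and the observation that only $f(X)$ enters (so $I_X$ and $Q_{X,\lambda}$ are well defined and linear on all of $C(\Omega)$) are exactly what is needed. The paper gives no proof of its own: it calls the result well known and follows Wendland's monograph, where essentially this same representer-type argument is used.
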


In this paper, we will only consider reproducing kernel Hilbert
spaces, which are Sobolev spaces and all error analysis will take
place in Sobolev spaces, which we introduce now, mainly to fix our
notation. Let $\Omega\subseteq \R^d$ be open. For $l\in \N_0$ and $1\leq
p\leq \infty$, we define the Sobolev space $W_p^l(\Omega)$ of order
$l$ to consist of all functions $f\in L_p(\Omega)$ with weak derivatives $D^\alpha
f\in L_p(\Omega)$ for all $\alpha\in\N_0^d$ with $|\alpha|\le l$. As
usual, this space is equipped with the norm
\[
\|f\|_{W_p^l(\Omega)}:=\begin{cases}\left(\sum_{|\alpha|\leq l}\|D^\alpha f\|_{L_p(\Omega)}^p\, \right)^{\frac{1}{p}},&1\leq p <\infty,\\
\max_{|\alpha|\leq l}\|D^{\alpha} f\|_{L_\infty(\Omega)}, & p=\infty.
\end{cases}
\]
We will also consider fractional order Sobolev spaces. For $\tau=l+t$,
$0<t<1$, $1\leq p< \infty$, these are defined as all functions $f\in
W_p^l(\Omega)$ satisfying   $\|f\|_{W_p^\tau(\Omega)}<\infty$,
where 
\[
\|f\|_{W_p^\tau(\Omega)}:=\left(\|f\|^p_{W_p^l(\Omega)}+
\sum_{|\alpha|=l}\int_{\Omega}\int_{\Omega}\frac{|D^{\alpha}f(x)-D^{\alpha}f(y)|^p}{\|x-y\|_2^{d+tp}}\,dx\,dy\right)^{\frac{1}{p}}.     
\]
Here, , $\|\cdot\|_2$ denotes the standard Euclidean norm on $\R^d$.
As usual, in the case of $p=2$ we also write $H^\tau(\Omega):=W_2^\tau(\Omega)$.

The Sobolev embedding theorem guarantees that for any $\tau>d/2$ and any
$\Omega\subseteq\R^d$ with at least a Lipschitz boundary, the
Sobolev space $H^\tau(\Omega)$ is a reproducing kernel Hilbert
space. However, only for very specific $\Omega$ the reproducing kernel
is explicitly known. In the case $\Omega=\R^d$, we can use Fourier
transform to introduce an alternative inner product with equivalent norm on
$H^\tau(\R^d)$ with corresponding reproducing kernel $K_\tau:\R^d\times\R^d\to\R$.
According to \cite[Corollary
    10.13]{Wendland-05-1}, it is possible to choose the kernel as a
translation-invariant function, i.e.
 $K_\tau(x,y)=\Phi_\tau(x-y)$ with integrable  $\Phi_\tau:\R^d\to\R$ having a 
  Fourier transform
  \[
  \widehat{\Phi_\tau}(\omega) = \int_{\R^d} \Phi_\tau (x) e^{-ix^\transpose\omega} dx
  \]
that decays like  $(1+\|\cdot\|_2^2)^{-\tau}$, i.e. there are two constants
  $c_1,c_2>0$ such that
  \begin{equation}\label{ftdecay}
c_1(1+\|\omega\|^2_2)^{-\tau} \le \widehat{\Phi_\tau}(\omega) \le
c_2(1+\|\omega\|_2^2)^{-\tau}, \qquad \omega\in \R^d.
  \end{equation}
  It is also often possible to choose
  the function $\Phi_\tau$ to be radial, i.e. $\Phi_\tau=\phi_\tau(\|\cdot\|_2)$ with
  a function $\phi_\tau:[0,\infty)\to\R$.
For example, the Matern functions are of this form with
$\widehat{\Phi_\tau}(\omega)=(1+\|\omega\|_2^2)^{-\tau}$. Other examples
are Wendland's  compactly supported radial basis functions
$\Phi_{d,k}=\phi_{d,k}(\|\cdot\|_2)\in C^{2k}(\R^d)$ 
with  $\tau=k+\frac{d+1}{2}$,  see \cite{Wendland-98-1}.

As outlined in \cite{Giesl-etal-21-1, Wendland-05-1} this can be used
to introduce an alternative inner product with equivalent norm also on
$H^\tau(\Omega)$  such that the 
restricted kernel $\Psi_\tau(x,y):=\Phi_\tau(x-y)$, $x,y\in\Omega$, is the
reproducing kernel of $H^\tau(\Omega)$ with respect to this inner
product. To be more precise, Lemma 2.2 in \cite{Giesl-etal-21-1}
states the following.

\begin{proposition}\label{locns}
  Assume $\Phi_\tau\in L_1(\R^d)\cap C(\R^d)$ has a Fourier
  transform $\widehat{\Phi_\tau}$ satisfying (\ref{ftdecay}) with
  $\tau>d/2$. Let $\Omega\subseteq\R^d$ be a bounded domain with a
  Lipschitz boundary. Let $\Psi_\tau:\Omega\times\Omega\to\R$ be
  defined by $\Psi_\tau(x,y)=\Phi_\tau(x-y)$, $x,y\in\Omega$. Then, there
  exists an inner product $\langle
  \cdot,\cdot\rangle_{\Psi_\tau}:H^\tau(\Omega)\times
  H^\tau(\Omega)\to\R$ on  $H^\tau(\Omega)$ such that
  $\Psi_\tau$ is the reproducing
  kernel of $H^\tau(\Omega)$ with respect to this inner product. The
  norm $\|\cdot\|_{\Psi_\tau}$ induced by this inner product is
  equivalent to the  norm on $H^\tau(\Omega)$, i.e. there
  are constants $C_1,C_2>0$ such that
  \[
  C_1 \|u\|_{\Psi_\tau} \le \|u\|_{H^\tau(\Omega)} \le C_2
  \|u\|_{\Psi_\tau}, \qquad u\in H^\tau(\Omega).
  \]
\end{proposition}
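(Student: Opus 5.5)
The plan is to realize $H^\tau(\Omega)$ as the restriction of the native space of $\Phi_\tau$ on $\R^d$ to $\Omega$, and to use an extension operator to compare norms. First, on $\R^d$ define
\[
\langle f,g\rangle_{\cN(\R^d)} := (2\pi)^{-d/2}\int_{\R^d}\frac{\widehat f(\omega)\overline{\widehat g(\omega)}}{\widehat{\Phi_\tau}(\omega)}\,d\omega .
\]
By (\ref{ftdecay}), the induced norm is equivalent to the standard Fourier characterization of $\|\cdot\|_{H^\tau(\R^d)}$, and $\Phi_\tau(\cdot-y)$ reproduces point values. The latter is the standard Fourier inversion computation, valid because $\tau>d/2$ makes $\widehat{\Phi_\tau}\in L_1$ and functions in $H^\tau(\R^d)$ continuous. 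So $H^\tau(\R^d)$ with this inner product is an RKHS with kernel $K_\tau(x,y)=\Phi_\tau(x-y)$.

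Next I apply the general restriction theorem for RKHS. If $H$ has kernel $K$ on $\R^d$, then $H|_\Omega:=\{f|_\Omega: f\in H\}$ with norm
\[
\|u\|_{H|_\Omega}:=\min\{\|f\|_H: f|_\Omega=u\}
\]
is an RKHS with kernel $K|_{\Omega\times\Omega}$. The minimum is attained because $\{f: f|_\Omega=u\}$ is a closed affine subspace, closed since point evaluations are continuous. The minimizer is the orthogonal projection onto $N^\perp$, where $N=\{f\in H: f|_\Omega=0\}$. Hence $H|_\Omega\cong N^\perp$ isometrically, which gives a Hilbert structure and an inner product by polarization. Reproduction follows from $K(\cdot,y)\in N^\perp$ for $y\in\Omega$: indeed $\langle f,K(\cdot,y)\rangle=f(y)=0$ for $f\in N$. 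Applying this with $H=H^\tau(\R^d)$ gives the candidate inner product $\langle\cdot,\cdot\rangle_{\Psi_\tau}$ with reproducing kernel $\Psi_\tau$.

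It remains to show that $H^\tau(\R^d)|_\Omega=H^\tau(\Omega)$ with equivalent norms. This is the main obstacle, and I would use Stein's universal extension operator $E:H^\tau(\Omega)\to H^\tau(\R^d)$, which is bounded for bounded Lipschitz domains and all $\tau\ge0$, including fractional order. Given $u\in H^\tau(\Omega)$, we get
\[
\|u\|_{\Psi_\tau}\le\|Eu\|_{\cN(\R^d)}\le C\|Eu\|_{H^\tau(\R^d)}\le C'\|u\|_{H^\tau(\Omega)} ,
\]
which also shows that $H^\tau(\Omega)\subseteq H^\tau(\R^d)|_\Omega$. Conversely, for any $f$ with $f|_\Omega=u$, the trivial restriction bound $\|u\|_{H^\tau(\Omega)}\le\|f\|_{H^\tau(\R^d)}\le c\|f\|_{\cN(\R^d)}$ holds. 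For fractional $\tau$ this requires checking that the Sobolev–Slobodeckij seminorm over $\Omega\times\Omega$ is dominated by the one over $\R^d\times\R^d$, which is immediate, and that the latter is equivalent to the Fourier norm. Taking the infimum over such $f$ gives $\|u\|_{H^\tau(\Omega)}\le C_2\|u\|_{\Psi_\tau}$. Together these give both the set equality and the norm equivalence with constants $C_1,C_2$.
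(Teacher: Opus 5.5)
Your proposal is correct and follows the same route as the argument behind the result, which the paper does not prove itself but cites (Lemma 2.2 of Giesl et al., resting on Wendland's book): identify the native space of $\Phi_\tau$ on $\R^d$ with $H^\tau(\R^d)$ via (\ref{ftdecay}), describe the native space of the restricted kernel as the restriction space with the minimal-extension norm (your $N^\perp$ argument), and compare norms with a bounded extension operator for Lipschitz domains. The one step that needs a more careful reference is the fractional-order extension: Stein's theorem is stated for integer orders, so for non-integer $\tau$ with the intrinsic Sobolev--Slobodeckij norm on $H^\tau(\Omega)$ you need interpolation together with the identification of that space with the restriction space on Lipschitz domains (e.g.\ DeVore--Sharpley or Rychkov), which is standard.
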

In what follows, we will, if not stated otherwise, always consider
$H^\tau(\Omega)$ to be equipped with the norm $\|\cdot\|_{\Psi_\tau}$
and will not distinguish between this norm and  $\|\cdot\|_{H^\tau(\Omega)}$.

We  end this section by stating  required sampling inequalities,
which were first introduced in the context of scattered data
approximation in \cite{NarcowichWardWendland} and later slightly generalized in
\cite{Arcangeli}.  To formulate them, we need
the so-called fill distance or mesh norm
\[
   h_{X,\Omega}:=\sup_{x\in \Omega} \min_{x_i\in X}
   \|x-x_i\|_2,
\]
 which measures how well the data sites $X$ cover the domain
 $\Omega$. Other relevant quantities are the separation radius
 $q_X:=1/2\min_{i\ne j} \|x_i-x_j\|_2$ and their ratio
 $\rho_{X,\Omega}:=h_{X,\Omega}/q_X$, which measures how uniformly
 distributed the points are. In the following theorem, we use the
 notation $(t)_+=\max\{t,0\}$ for any $t\in\R$ and
 \[
 \|v\|_{\ell_p(X)} = \begin{cases}
   \left(\sum_{j=1}^N |v(x_j)|^p \right)^{1/p} & \mbox{ for } 1\le
   p<\infty,\\
   \max_{1\le j\le N} |v(x_j)| & \mbox { for } p=\infty.
 \end{cases}
   \]
  for any $v:\Omega\to\R$ and
  $X=\{x_1,\ldots,x_N\}\subseteq\Omega\subseteq\R^d$. Later on, we
  will extend this notation to $v:\Omega\to\R^{\ell}$ in the obvious
  way, by interpreting $(v(x_1),\ldots v(x_N))$ as a vector in $\R^{N \ell}$.

\begin{theorem}[Sampling Inequality]\label{Sampling_Ineq}
  Let $\Omega\subseteq\R^d$ be a bounded domain with a Lipschitz 
boundary. Let $p,q\in [1,\infty]$ and $n \in \N$. Let $\tau>d/2$ and
$\gamma:=\max\{2,p,q\}$. Then, there exist constants $h_0>0$
(depending on $\Omega$ and $\tau$) and $C>0$ (depending on $\Omega$,
$\tau$ and $q$) such that for all $X=\{x_1,\ldots,x_N\}\subseteq 
\Omega$ 
with $h_{X,\Omega}\le h_0$ and all $v\in H^\tau(\Omega)$,
we have
\[
|v|_{W_q^\mu(\Omega)} \le C\left(h_{X,\Omega}^{\tau-\mu-d(1/2-1/q)_+} |v|_{H^\tau(\Omega)}
+ 
h_{X,\Omega}^{d/\gamma-\mu} \|v\|_{\ell_p(X)}\right) 
\]
for all $0\le \mu\le \ell(\tau,d,q)$, where
$\ell(\tau,d,q)=\ell_0(\tau,d,q):=\tau-d(1/2-1/q)_+$ in 
the case of $\tau\in\N$ and either $q>2$ and $\ell_0(\tau,d,q)\in\N$ or
$q=2$. Otherwise $\ell(\tau,d,q)=\lceil \ell_0(\tau,d,q)\rceil-1$. Moreover,
we require $\mu\in\N_0$ in the case of $q=\infty$.
\end{theorem}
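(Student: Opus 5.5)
This sampling inequality is classical (Narcowich--Ward--Wendland, Arcangeli et al.). I would follow the local polynomial reproduction strategy and carry it out on a family of small star-shaped subdomains covering $\Omega$, then sum. Since $\Omega$ is bounded with Lipschitz boundary, it satisfies an interior cone condition with some radius $r$ and angle $\theta$. For $h=h_{X,\Omega}\le h_0$ I will cover $\Omega$ by sets $D_j$ that are star-shaped with respect to balls $B_j$ of radius proportional to $h$. Each $D_j$ has diameter $\delta = Ch$, and the constant $C$ is chosen so that $X\cap D_j$ is a norming set for $\pi_{m-1}(\R^d)$ with $m=\lceil\tau\rceil$ or $\lfloor\tau\rfloor+1$. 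The standard cone-condition argument gives norming constant $2$, provided $h\le h_0$ with $h_0$ depending on $\theta$, $r$ and $m$. The covering can be chosen with bounded overlap, $\sum_j \chi_{D_j}\le M$.

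On each $D_j$, I use the averaged Taylor polynomial $p_j$ of $v$ of degree $m-1$. The Bramble--Hilbert / Dupont--Scott estimate then gives
\[
|v-p_j|_{W_q^\mu(D_j)} \le C\delta^{\tau-\mu-d(1/2-1/q)}|v|_{H^\tau(D_j)},
\]
and fractional $\tau$ is handled via the Sobolev--Slobodeckij seminorm. I write $v=(v-p_j)+p_j$ and bound the polynomial part by a Markov inequality and the norming property:
\[
|p_j|_{W_q^\mu(D_j)}\le C\delta^{d/q-\mu}\|p_j\|_{L_\infty(D_j)}\le C\delta^{d/q-\mu}\|p_j\|_{\ell_\infty(X\cap D_j)}.
\]
Then $\|p_j\|_{\ell_\infty(X\cap D_j)}\le \|v\|_{\ell_\infty(X\cap D_j)}+\|v-p_j\|_{L_\infty(D_j)}$. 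The last term is again controlled by the local Sobolev embedding estimate $\delta^{\tau-d/2}|v|_{H^\tau(D_j)}$; this is where $\tau>d/2$ enters. Scaling all constants to a reference domain of unit size makes them uniform in $j$ and $h$, since the $D_j$ have uniformly controlled chunkiness.

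Next I sum the $q$-th powers over $j$. Bounded overlap gives $\sum_j |v|^{q}_{H^\tau(D_j)}\le (\sum_j |v|^2_{H^\tau(D_j)})^{q/2}\le M|v|^q_{H^\tau(\Omega)}$ when $q\ge2$. For $q<2$ I apply Hölder with the number of patches $\sim h^{-d}$, which produces the $(1/2-1/q)_+$ exponent. For the discrete term, $\sum_j \|v\|^q_{\ell_\infty(X\cap D_j)}$ is converted to $\|v\|_{\ell_p(X)}$ by comparing $\ell_\infty$, $\ell_p$ and $\ell_q$ norms. The loss factor $h^{d/\gamma}$ with $\gamma=\max\{2,p,q\}$ comes from $\|\cdot\|_{\ell_\infty}\le\|\cdot\|_{\ell_p}$ and from Hölder when $q<\max\{p,2\}$. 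I will follow Arcangeli--López de Silanes--Torrens for this exponent bookkeeping. The restrictions on $\mu$, namely $\ell(\tau,d,q)$ and $\mu\in\N_0$ for $q=\infty$, are exactly those under which the embedding $H^\tau(D)\hookrightarrow W_q^\mu(D)$ used in the local estimate holds.

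I expect the main obstacle to be the first step: building a covering with uniform chunkiness, norming constants and bounded overlap near the Lipschitz boundary, with constants that do not degenerate as $h\to0$. I would follow Narcowich--Ward--Wendland and use the cone condition to place balls of radius $\sim h$ inside cones of height $\sim h$. The second delicate point is the exponent bookkeeping in the summation for general $p$ and $q$.
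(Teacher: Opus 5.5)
The paper gives no proof of this theorem; it quotes it from \cite{NarcowichWardWendland} and \cite{Arcangeli}. Your outline is the standard route of those references: a Duchon-type cover by patches of diameter $\sim h$, star-shaped with respect to balls and with bounded overlap; norming sets; averaged Taylor polynomials with Bramble--Hilbert; Markov; then summation. Your bookkeeping of the discrete term is sound. With $a_j=\|v\|_{\ell_\infty(X\cap D_j)}$, bounded overlap gives $(\sum_j a_j^p)^{1/p}\le M^{1/p}\|v\|_{\ell_p(X)}$. Passing to $\ell_q$ over $O(h^{-d})$ patches costs at most $h^{-d(1/q-1/p)_+}$, so you even get $h^{d/\max\{p,q\}-\mu}\le h^{d/\gamma-\mu}$.

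There is, however, a step that fails as written. For $q<\infty$ the statement allows non-integer $\mu=k+t$ with $0<t<1$. Then you cannot ``sum the $q$-th powers over $j$'' on the left-hand side. The quantity $|v|_{W_q^\mu(\Omega)}^q$ is the nonlocal double integral $\sum_{|\alpha|=k}\int_\Omega\int_\Omega|D^\alpha v(x)-D^\alpha v(y)|^q\|x-y\|_2^{-d-tq}\,dx\,dy$, and the patches only see pairs $(x,y)$ lying in a common $D_j$.

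In fact $|v|_{W_q^\mu(\Omega)}^q\le C\sum_j|v|_{W_q^\mu(D_j)}^q$ cannot hold with $C$ independent of $h$. For a non-constant linear $v$ and $\mu=t$, the right-hand side is $O(h^{q(1-t)})$, while the left-hand side is a fixed positive number. Superadditivity on the other side, $\sum_j|v|_{H^\tau(D_j)}^2\le M|v|_{H^\tau(\Omega)}^2$, is fine; only the left-hand side breaks.

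A clean repair is to prove the integer cases first and then interpolate:
\begin{itemize}
\item $\ell(\tau,d,q)$ is always an integer, so $k+1\le\ell(\tau,d,q)$.
\item On a Lipschitz domain and for $1\le q<\infty$, real interpolation plus a universal extension operator gives $\|v\|_{W_q^{k+t}(\Omega)}\le C\|v\|_{W_q^k(\Omega)}^{1-t}\|v\|_{W_q^{k+1}(\Omega)}^t$.
\item Bound both full norms by the integer-order inequalities; lower-order seminorms are absorbed for $h\le1$.
\item The identity $(h^aA+h^bB)^{1-t}(h^{a-1}A+h^{b-1}B)^t=h^{-t}(h^aA+h^bB)$ then yields the fractional case.
\end{itemize}
Alternatively, split the double integral at $\|x-y\|_2=ch$. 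The far part is at most $Ch^{-tq}\|D^\alpha v\|_{L_q(\Omega)}^q$ and is controlled by the integer case $k$. The near part requires a cover in which every such pair lies in a common patch.

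Finally, you claim that the restrictions on $\mu$ are exactly those under which the local embedding $H^\tau(D_j)\hookrightarrow W_q^\mu(D_j)$ holds. This is wrong at one endpoint. For $q=\infty$, $\tau\in\N$ and $\tau-d/2\in\N$, the statement as printed admits $\mu=\tau-d/2$, where $H^{d/2}\not\hookrightarrow L_\infty$. There your local estimate fails, and the inequality itself is false: for $d=\tau=2$ and $\mu=1$ it would force $H^2(\Omega)\subseteq W_\infty^1(\Omega)$. The case $\ell=\ell_0$ with $q>2$ should read $2<q<\infty$. This is a defect of the statement rather than of your method, but no proof can cover that case.
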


%%%%%%%%%%%%%%%%%%%%%%%%%%%%%%%%%%%%%%%%%%%%%%%%%
\section{Approximation of functions defined on embedded
  submanifolds}
The goal of this section is to discuss and extend the results of the
previous section to the approximation of functions $f:\cM\to\R$, where
$\cM$ is an $m$-dimensional smooth, compact and connected embedded
submanifold of $\R^d$. 
To  this end, we will first remind the reader of some necessary material
on embedded submanifolds, including Sobolev spaces. Then, we will
discuss kernel approximation for 
functions defined on manifolds. After that, we will 
give error estimates for the interpolation and the penalized
least-squares operators and finally extend these results to the
approximation of functions $f:\cM\to\R^{\ell}$. 

\subsection{Embedded submanifolds}
\label{Submanifolds}
In this section, we will collect necessary material on
smooth embedded submanifolds and fix the notation for the rest of this
paper. Details on the topic can, for example, be found in  
\cite{LeeSmooth}. As usual, we describe a smooth $m$-dimensional
submanifold $\cM$ of $\R^d$ by a smooth atlas $\cA_{\cM}=\{(U_i,\varphi_i) :
i\in I\}$ of charts $(U_i,\varphi_i)$ with relatively open sets
$U_i\subseteq\cM\subseteq\R^d$ and homeomorphisms
 $\varphi_i:U_i\to B_i:=\varphi_i(U_i)\subseteq\R^m$ such that
$\varphi_i^{-1}$ is an immersion and   any two charts
$(U_i,\varphi_i)$ and $(U_j,\varphi_j)$ are compatible in the sense
that either $U_i\cap U_j = \emptyset$ or the transition map
    $\varphi_j\circ \varphi_i^{-1}:\varphi_i(U_i\cap U_j)\to \varphi_j(U_i\cap U_j)$ is a
$C^{\infty}$-diffeomorphism. Two smooth atlases
$\cA_{\cM}$ and $\cA_{\cM}^*$  are called compatible if
 each chart of $\cA_{\cM}$ is compatible with each chart of
$\cA_{\cM}^*$. 

In the rest of this paper, we will use the term submanifold in this
sense, i.e. we will mean an  embedded submanifold.

As we will only consider compact submanifolds, we can always find a
finite atlas, i.e. an atlas with a finite index set
$I=\{1,\ldots,L\}$. However, we also need some information on the
ranges $\varphi_j(U_j)$, when dealing with  Sobolev spaces on manifolds. In \cite[Lemma
  1]{Behzadan_Holst} it is shown that given a smooth 
atlas $\cA=\{(V_i,\varphi_i): i\in I\}$ of a compact
manifold $\cM$, there is a finite open cover $\{U_j\}_{1\leq j\leq L}$
of $\cM$ and associated coordinate charts $\{\psi_j\}_{1\leq j\leq L}$
such that the atlas $\cA_{\cM}=\{(U_j,\psi_j): 1\leq j\leq
L\}$ is compatible to $\cA$ and the image $B_j=\psi(U_j)$ of each
chart is an open ball in $\R^m$. 
\begin{definition}
    A smooth, finite  atlas $\cA_\cM = \{(U_j,\psi_j): 1\le j\le L\}$
    for an $m$-dimensional smooth, compact  submanifold
    $\cM\subseteq\R^d$ is called admissible if the image $B_j$ of each
    coordinate domain $U_j$ in the atlas under the 
    respective coordinate chart is an open ball in $\R^m$. 
\end{definition}

To finalize our discussion on charts,
we need one more thing. We often 
require a partition of unity associated to admissible atlases. The
following result comes from \cite[Theorem 30]{Behzadan_Holst}. 

\begin{lemma}
  Let $\cM$ be a compact manifold with admissible atlas
    $\cA_{\cM}=\{(U_j,\psi_j): 1\leq j\leq L\}$. Then, there
    exists an associated partition of unity, i.e., a set $\{\chi_j:
    1\leq j\leq L\}$ of smooth functions $\chi_j:\cM\to \R$ such that
    \begin{enumerate}
        \item[(i)] $0\leq \chi_j\leq 1$,
        \item[(ii)] $\sum_{j=1}^L\chi_j=1$,
        \item[(iii)] $\operatorname{supp}(\chi_j)=\overline{\{x\in \cM: \chi_j(x)\neq 0\}}\subseteq U_j$.
    \end{enumerate}
    We write $\cA_{\cM}=\{(U_j,\psi_j,\chi_j): 1\leq j\leq L\}$ and
    call $\cA_{\cM}$ an admissible augmented atlas. 
\end{lemma}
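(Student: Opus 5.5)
The plan is the standard construction of a smooth partition of unity subordinate to a finite open cover of a compact manifold, carried out with bump functions in the ball-shaped coordinate images. The compactness of $\cM$ and the finiteness of the atlas are the only ingredients.

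\emph{Step 1: shrinking.} First shrink the cover. Each $x\in\cM$ lies in some $U_j$, so $\psi_j(x)\in B_j$, and since $B_j$ is open we can pick a closed ball $\overline{D}\subseteq B_j$ around $\psi_j(x)$ with $\psi_j(x)$ in the interior. The sets $\psi_j^{-1}(\operatorname{int} D)$ form an open cover of $\cM$, so by compactness finitely many suffice. For each $j$, let $K_j$ be the union of the finitely many sets $\psi_j^{-1}(\overline D)$ selected for that index $j$, taking $K_j=\emptyset$ if none is selected. Each such set is compact, being the continuous image of a compact set under $\psi_j^{-1}$. Hence $K_j\subseteq U_j$ is compact, and the interiors of the $K_j$ still cover $\cM$.

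\emph{Step 2: bump functions.} For each selected closed ball $\overline D\subseteq B_j$, choose a smooth bump function $\beta:\R^m\to[0,1]$ that is positive on the interior of $D$ and has support in a slightly larger closed ball that is still contained in $B_j$; the usual $e^{-1/(r^2-|y|^2)}$ construction works. The function $\beta\circ\psi_j$ on $U_j$, extended by zero to the rest of $\cM$, is smooth on $\cM$. It is smooth on $U_j$ because $\psi_j$ is a smooth chart. It is smooth near every point outside $U_j$ because its support $\psi_j^{-1}(\operatorname{supp}\beta)$ is compact and hence closed in $\cM$, so every point of $\cM\setminus U_j$ has a neighbourhood on which the function vanishes. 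Summing these bumps over the balls selected for index $j$ gives a smooth $g_j\ge0$ with $\operatorname{supp} g_j\subseteq U_j$ and $g_j>0$ on the interior of $K_j$.

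\emph{Step 3: normalization.} Set $g=\sum_{j=1}^L g_j$. Because the interiors of the $K_j$ cover $\cM$, we have $g>0$ everywhere, and $g$ is smooth. Define $\chi_j=g_j/g$. Then $\chi_j$ is smooth, $0\le\chi_j\le1$, $\sum_j\chi_j=1$, and $\operatorname{supp}\chi_j=\operatorname{supp} g_j\subseteq U_j$. This gives properties (i)--(iii).

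The main care point is Step 2: one must check that the support, taken as a closure in $\cM$, really lies inside the open set $U_j$, and that extending by zero preserves smoothness. Both follow because the support of $\beta$ is a compact subset of $B_j$ and $\psi_j^{-1}$ is continuous, so its image is compact in $U_j$ and therefore closed in the Hausdorff space $\cM$.
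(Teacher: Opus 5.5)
Your proof is correct. The paper gives no argument for this lemma; it takes it directly from \cite[Theorem 30]{Behzadan_Holst}, i.e.\ as an instance of the standard existence theorem for smooth partitions of unity subordinate to an open cover.

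Your proposal supplies that argument in the elementary form available for compact manifolds. Compactness replaces the exhaustion and local-finiteness bookkeeping needed for general paracompact manifolds. Only finitely many bump functions are ever summed, so smoothness of $g$ and of $\chi_j=g_j/g$ is immediate. The construction also shows that the ball shape of the $B_j$ plays no role in this lemma. That property matters elsewhere in the paper, for instance to apply the sampling inequality on the Lipschitz domains $B_j$.

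There is one small slip. With $\beta$ only required to be positive on $\operatorname{int}D$, the claim that $g_j>0$ on the interior of $K_j$ can fail, because the interior of a finite union of closed balls may be larger than the union of their interiors. For example, with $m=1$, the set $[0,1]\cup[1,2]$ has $1$ in its interior, but $1$ lies in neither open interval. Bumps vanishing outside $(0,1)$ and $(1,2)$ then give $g_j(1)=0$. You can repair this in either of two ways:
\begin{itemize}
\item choose $\beta$ positive on the slightly larger open ball, and hence on all of $\overline{D}$; or
\item argue $g>0$ directly: the finitely many selected open sets $\psi_j^{-1}(\operatorname{int}D)$ already cover $\cM$, and each associated bump is positive on its own set.
\end{itemize}
The conclusion of Step 3 is unaffected.
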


For any $p\in\cM$, the tangent space $T_p\cM$ at $p$ consists of all
vectors $\dot{\gamma}(0)$ if $\gamma\in
C^1((-\epsilon,\epsilon),\R^d)$ is a curve with
$\gamma((-\epsilon,\epsilon))\subseteq\cM$ and $\gamma(0)=p$.
%\[
%T_p\cM:=\{\dot\gamma(t_0) : \gamma\in
%C^1(I,\R^d),\  t_0\in I\subseteq \R \text{ interval, with
%}\gamma(I)\subseteq \cM,\gamma(t_0)=p\}\subseteq
%\R^d.
%\] 
$T_p\cM$ is an $m$-dimensional linear subspace of $\R^d$. Its
orthogonal complement $N_p\cM:=(T_p\cM)^\perp$ with respect to the
inner product of the ambient $\R^d$ is called the normal space at
$p$. Often, we do not only care about the tangent space at a single
point $p\in \cM$, but instead consider the totality of all tangent or
normal spaces to the manifold. The resulting objects 
$T\cM:=\{(p,v):p\in \cM, v\in T_p\cM\}$ and $N\cM:=\{(p,n):p\in \cM,
n\in N_p\cM\}$ are called tangent and normal bundle, respectively. It can be
shown that $T\cM$ and $N\cM$ are themselves embedded submanifolds of
$\R^d\times \R^d$ of dimension $2m$  and $d$, respectively. 
As an embedded submanifold of $\R^{d\times d}$, the normal bundle
carries a unique smooth structure itself. Under this structure the
so-called canonical projection $\pi_{N\cM}:N\cM\to \cM, (p,n)\mapsto
p$ 
is a smooth map.

Finally, to measure distances on an embedded submanifold $\cM\subseteq
\R^d$ one could obviously use the Euclidean distance of the ambient
$\R^d$, i.e., $\|p-q\|_2$ for $p,q\in \cM$. However, on a connected
manifold $\cM$ it is more natural to define the distance as
the length of the shortest curve on $\cM$ connecting $p$ and $q$. This
leads to the definition of the intrinsic distance $d_{\cM}:\cM\times
\cM\to \R$, defined as 
\[
d_{\cM}(p, q) := \inf \left\{ L(\gamma) :
\gamma : [a, b] \to \cM \text{ piecewise } C^1,\, \gamma(a) = p,\, \gamma(b) = q \right\}.
\]
Here, the length \( L(\gamma) \) of a piecewise \( C^1 \) curve \( \gamma \) is given by
$L(\gamma) := \int_a^b \left\| \dot{\gamma}(t) \right\|_{2} \, dt.$

We end this section by recalling Sobolev spaces on manifolds. There are
various equivalent ways to define Sobolev spaces
on embedded submanifolds (via charts, via the Laplace-Beltrami
operator, or via covariant derivatives); for  a comprehensive review and
comparison of some of these definitions see \cite{Behzadan_Holst}. As
in \cite{Fuselier-Wright-12-1}, we use a 
definition in terms of charts. For our later results, however, we
require a substantially more detailed analysis of these spaces. To this end, we rely primarily
on the comprehensive treatment in \cite{Behzadan_Holst}. We start by
defining Lebesgue spaces $L_p(\cM)$ on $\cM$.
\begin{definition}
Let $\cA_{\cM}=\{(U_j,\psi_j,\chi_j): 1\le j\le L\}$ be an
admissible augmented atlas of $\cM$ and $1\leq p< \infty$. The space
$L_p(\cM)$ is the completion of $C^{\infty}(\cM)$ with respect to the
norm 
\[
\|f\|_{L_p(\cM)}:=\sum_{j=1}^L\|(\chi_jf)\circ
\psi_j^{-1}\|_{L_p(\psi_j(U_j))}.
\]
For $p=\infty$ we define $L_\infty(\cM)$ to consist of all functions
$f:\cM\to\R$ with
\[
\|f\|_{L_\infty(\cM)}:=\esssup_{x\in\cM} |f(x)| <\infty.
\]
\end{definition}
It can be shown \cite[Theorem 86]{Behzadan_Holst} that this definition
is equivalent to the usual one based on the Riemannian volume measure
on $\cM$ (see \cite[Definition 29.2)]{Behzadan_Holst}) in the sense of
equivalent norms, and therefore is independent of the chosen
admissible augmented atlas $\cA_\cM$. Moreover, we obviously have
\[
\|f\|_{L_\infty(\cM)} \le \sum_{j=1}^L \|(\chi_j f)\circ
\psi_j^{-1}\|_{L_\infty(\psi_j(U_j))}.
\]

Similarly, Sobolev spaces on the
compact manifold $\cM$ are defined as follows \cite[Remark  54.1]{Behzadan_Holst}.

\begin{definition}\label{Sobolev_Spaces_M}
  Let $\cA_{\cM}=\{(U_j,\psi_j,\chi_j): 1\le j\le L\}$ be an
admissible augmented atlas of $\cM$, $1\leq p\le\infty$ and $\tau \geq
0$ (with $\tau\in\N_0$ for $p=\infty$). The Sobolev  space $W_p^\tau(\cM)$ is defined as 
\[
W_p^\tau(\cM):=\{f\in L_p(\cM):
\|f\|_{W_p^\tau(\cM)}:=\sum_{j=1}^L\|(\chi_jf)\circ
\psi_j^{-1}\|_{W_p^\tau(\psi_j(U_j))}<\infty\}.
\]
\end{definition}

At first glance, $W_p^\tau(\cM)$ seems to depend on the choice of the
admissible augmented atlas $\cA_{\cM}$. However, in Theorem 87 of
\cite{Behzadan_Holst} it is shown that for a second admissible
augmented atlas, which is compatible to $\cA_{\cM}$, algebraically the
same space arises and the respective norms are equivalent. Note that
in \cite{Behzadan_Holst} all results are initially formulated in terms
of distributions on $\cM$. However, as explained in Section 9.3 of that
article, this is equivalent to our Definition \ref{Sobolev_Spaces_M},
as all such distributions are regular and correspond to $L_p(\cM)$ functions.  

For later purposes, we have to consider these spaces in more detail. Naturally,
one may ask whether for every function $f \in W_p^\tau(\cM)$ the local
representation $f \circ \psi_j^{-1}$ belongs to the classical Sobolev
space $W_p^\tau(\psi_j(U_j))$ on the open ball
$\psi_j(U_j)\subseteq\R^m$. Contrary to claims made in some
references, this is not true in general. A simple counterexample can
be found in \cite[Remark 60]{Behzadan_Holst}. Fortunately, a slightly
weaker result \cite[Corollary 8]{Behzadan_Holst} holds true. To
formulate it, we define the local Sobolev space
$W_{loc}^{\tau,p}(\Omega)$ for $1< p<\infty$, $\tau\geq 0$ on a nonempty
open set $\Omega\subseteq\R^d$ by 
\[
W_{loc}^{\tau,p}(\Omega):=\{f\in D'(\Omega):  \varphi f\in W_p^\tau(\Omega)
\mbox{ for all } \varphi\in C_c^{\infty}(\Omega)\}.
\]
Here, $D'(\Omega)$ denotes the space of distributions on $\Omega$ as
introduced in \cite[Definition 50]{Behzadan_loc} and
$C_c^\infty(\Omega)$ consist of all $C^\infty(\Omega)$-functions with
compact support in $\Omega$. 

\begin{lemma}\label{Local_representation}
    Let $f\in W_p^\tau(\cM)$ based on the admissible augmented atlas
    $\cA_{\cM} =\{(U_j,\psi_j,\chi_j): 1\le j\le L\}$. Then, for
    all $1\leq j\leq L$ the local representation $f\circ \psi_j^{-1}$
    belongs to the local Sobolev space
    $W_{loc}^{\tau,p}(\psi_j(U_j))$. Moreover, for $\xi\in
    C_c^{\infty}(\psi_j(U_j))$ we can estimate 
    \[
    \|\xi(f\circ \psi_j^{-1})\|_{W^{\tau}_p(\psi_j(U_j))}\leq
    C\|f\|_{W_p^\tau(\cM)},
    \]
    where the implicit constant may depend on $\xi$.
\end{lemma}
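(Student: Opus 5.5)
Since $\xi(f\circ\psi_j^{-1})$ lives on the chart image $B_j=\psi_j(U_j)$, the plan is to rewrite it through the partition of unity as a finite sum of pieces that the norm $\|f\|_{W_p^\tau(\cM)}$ controls. Fix $j$ and $\xi\in C_c^\infty(B_j)$, and set $K:=\psi_j^{-1}(\operatorname{supp}\xi)$, a compact subset of $U_j$. On $B_j$ we write
\[
\xi\,(f\circ\psi_j^{-1}) = \sum_{i=1}^L \xi\,\bigl((\chi_i f)\circ\psi_j^{-1}\bigr).
\]
Only indices $i$ with $\operatorname{supp}\chi_i\cap K\neq\emptyset$ contribute. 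For each such $i$ the $i$-th term is supported in the compact set $\psi_j(\operatorname{supp}\chi_i\cap K)$, which lies inside $\psi_j(U_i\cap U_j)$. By the triangle inequality it therefore suffices to bound each term in $W_p^\tau(B_j)$ by $C\|(\chi_i f)\circ\psi_i^{-1}\|_{W_p^\tau(B_i)}$, since the sum of the latter norms is exactly $\|f\|_{W_p^\tau(\cM)}$.

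For a fixed $i$, we rewrite the term using the transition map $T:=\psi_i\circ\psi_j^{-1}:\psi_j(U_i\cap U_j)\to\psi_i(U_i\cap U_j)$, which is a $C^\infty$-diffeomorphism:
\[
\xi\,\bigl((\chi_i f)\circ\psi_j^{-1}\bigr) = \xi\cdot\bigl(g_i\circ T\bigr),\qquad g_i:=(\chi_i f)\circ\psi_i^{-1}\in W_p^\tau(B_i).
\]
Pick a cutoff $\eta\in C_c^\infty(\psi_i(U_i\cap U_j))$ with $\eta\equiv1$ on $T(\operatorname{supp}\xi\cap\psi_j(U_i\cap U_j)\cap \psi_j(\operatorname{supp}\chi_i))$. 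This set is compact because it equals $\psi_i(K\cap\operatorname{supp}\chi_i)$. Then $\xi\,(g_i\circ T)=\xi\,((\eta g_i)\circ T)$. Two standard facts now apply. First, multiplication by $\eta$ is bounded on $W_p^\tau(B_i)$. Second, composition with a smooth diffeomorphism is bounded from $W_p^\tau$ functions with compact support in $\psi_i(U_i\cap U_j)$ to $W_p^\tau$ functions with compact support in $\psi_j(U_i\cap U_j)$. After that, multiplication by $\xi$ is again bounded, and extension by zero to $B_j$ preserves the norm because the support is compact. Chaining these gives $\|\xi((\chi_if)\circ\psi_j^{-1})\|_{W_p^\tau(B_j)}\le C_{i,\xi}\|g_i\|_{W_p^\tau(B_i)}$. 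Summing over $i$ proves the estimate. Membership in $W^{\tau,p}_{loc}(B_j)$ follows directly, because $\xi$ was arbitrary.

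The main obstacle is the fractional case $\tau\notin\N_0$, in particular the invariance of $W_p^\tau$ under diffeomorphisms together with the extension-by-zero step. For integer $\tau$ both follow from the chain rule together with the boundedness of all derivatives of $T$ on compact sets. For fractional $\tau$ I would handle the diffeomorphism by interpolation between the neighbouring integer orders, using that fractional Sobolev spaces arise by real interpolation; alternatively one can treat the Gagliardo seminorm directly via the bi-Lipschitz property of $T$ on a compact neighbourhood. The fact that the Gagliardo double integral of a compactly supported function does not grow when it is extended by zero needs a separate check, which reduces to bounding the cross term $\int_{\operatorname{supp}}\int_{\text{outside}}$ by the $L_p$ norm times a constant depending on the distance of the support to the boundary. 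I expect these pointwise multiplier and diffeomorphism facts are also what \cite{Behzadan_Holst} proves, so they can be cited there.
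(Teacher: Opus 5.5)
Your argument is correct. The paper gives no proof of its own; it imports the lemma from \cite[Corollary 8]{Behzadan_Holst}, and your proof (split $\xi(f\circ\psi_j^{-1})$ via the partition of unity, pass through the transition maps $\psi_i\circ\psi_j^{-1}$ after inserting a cutoff $\eta$ compactly supported in $\psi_i(U_i\cap U_j)$, then use the multiplier, diffeomorphism-invariance and zero-extension results for compactly supported Sobolev--Slobodeckij functions) is the standard argument behind that result, and it also shows why the constant must depend on $\xi$. The only slip is verbal: extension by zero can increase the Gagliardo seminorm, but only by the cross term you identify, which is bounded by a constant (depending on the distance of the support to the boundary) times the $L_p$-norm of the top-order derivatives, so the estimate goes through.
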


Additionally, we will need the following equivalent characterization of local Sobolev functions.
\begin{lemma}\label{Characterization_locally_Sobolev}
    Let $\Omega\subseteq\R^d$ be an open, bounded Lipschitz domain,
    $1<p<\infty$ and $\tau\geq 0$. 
    Then, $f\in D'(\Omega)$ is in $W_{loc}^{\tau,p}(\Omega)$ if and only
    if for every precompact open set $V$ with
    $\overline{V}\subseteq\Omega$ there is a $w\in W_p^\tau(\Omega)$ such
    that $w|_V=f|_V$. 
\end{lemma}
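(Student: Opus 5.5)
We prove the two implications of Lemma \ref{Characterization_locally_Sobolev} separately, using only cut-off functions and the elementary fact that multiplying by a $C_c^\infty(\Omega)$-function (more generally, by a function smooth and bounded together with all its derivatives) is a bounded operator on $W_p^\tau(\Omega)$ for $1<p<\infty$, $\tau\ge 0$.

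\emph{($\Rightarrow$)} Let $f\in W_{loc}^{\tau,p}(\Omega)$ and let $V$ be precompact and open with $\overline V\subseteq\Omega$. Since $\overline V$ is compact and $\Omega$ is open, $\delta:=\operatorname{dist}(\overline V,\R^d\setminus\Omega)>0$. Mollifying the indicator function of the $\delta/2$-neighbourhood of $\overline V$ with a mollifier of radius $\delta/4$ gives $\varphi\in C_c^\infty(\Omega)$ with $\varphi\equiv 1$ on a neighbourhood of $\overline V$. We then set $w:=\varphi f$. By the definition of $W_{loc}^{\tau,p}(\Omega)$ we have $w\in W_p^\tau(\Omega)$. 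For any test function $\eta\in C_c^\infty(V)$ we get $\langle w,\eta\rangle=\langle f,\varphi\eta\rangle=\langle f,\eta\rangle$, so $w|_V=f|_V$ as distributions.

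\emph{($\Leftarrow$)} Let $\varphi\in C_c^\infty(\Omega)$ with $K:=\operatorname{supp}\varphi$. We choose an open $V$ with $K\subseteq V$, $\overline V\subseteq\Omega$ and $\overline V$ compact, for instance the $\delta/2$-neighbourhood of $K$ with $\delta=\operatorname{dist}(K,\R^d\setminus\Omega)$; this is possible because $\Omega$ is bounded. The hypothesis gives $w\in W_p^\tau(\Omega)$ with $w|_V=f|_V$. For $\eta\in C_c^\infty(\Omega)$ the product $\varphi\eta$ lies in $C_c^\infty(V)$, so
\[
\langle\varphi f,\eta\rangle=\langle f,\varphi\eta\rangle=\langle w,\varphi\eta\rangle=\langle \varphi w,\eta\rangle .
\]
Hence $\varphi f=\varphi w$ as distributions on $\Omega$. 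It remains to show $\varphi w\in W_p^\tau(\Omega)$.

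\emph{Main obstacle: the multiplier property.} For integer $\tau$ this follows directly from the Leibniz rule. For fractional $\tau=l+t$, the derivatives $D^\alpha(\varphi w)$ with $|\alpha|=l$ are finite sums of terms $D^\beta\varphi\, D^{\alpha-\beta}w$. In the Slobodeckij seminorm we write
\[
g(x)h(x)-g(y)h(y)=g(x)\bigl(h(x)-h(y)\bigr)+h(y)\bigl(g(x)-g(y)\bigr),
\]
with $g=D^\beta\varphi$ and $h=D^{\alpha-\beta}w$. The first term is bounded by $\|g\|_\infty$ times the seminorm of $h$ (or, when $\beta\neq 0$, by $\|h\|_{W^1_p}\le\|w\|_{W^l_p}$ via a Lipschitz-type estimate). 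The second term uses $|g(x)-g(y)|\le C\min\{1,\|x-y\|_2\}$, so that
\[
\int\frac{\min\{1,\|x-y\|_2\}^p}{\|x-y\|_2^{d+tp}}\,dx<\infty
\]
uniformly in $y$, since $0<t<1$. This yields $\|\varphi w\|_{W_p^\tau(\Omega)}\le C_\varphi\|w\|_{W_p^\tau(\Omega)}$. Alternatively, for a Lipschitz domain one can extend $w$ to $\R^d$, apply the standard multiplier theorem on $\R^d$, and restrict back. Either route completes the proof.
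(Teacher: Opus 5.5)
Your proof is correct. Your ($\Rightarrow$) direction is exactly the cut-off construction $w=\Gamma f$ that the paper records after the lemma. The paper calls this the ``if'' direction, but it is the implication you label ($\Rightarrow$).

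For the converse the routes differ. The paper gives no argument of its own and cites \cite[Theorem 100]{Behzadan_loc}. You instead argue directly: take $V$ to be a neighbourhood of $\operatorname{supp}\varphi$, observe that $\varphi f=\varphi w$ in $D'(\Omega)$, and reduce to the fact that multiplication by $\varphi\in C_c^\infty(\Omega)$ is bounded on $W_p^\tau(\Omega)$. This makes the lemma self-contained and shows that the multiplier estimate is its only analytic input. The paper itself uses that multiplier property later, in the proof of Theorem \ref{thm:ProjectionNormEstimate}, citing \cite[Corollary 3]{Behzadan_Holst}. You could therefore cite it rather than re-derive the Slobodeckij computation.

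Your sketch of that computation is sound. It rests on three ingredients:
\begin{itemize}
\item the splitting $g(x)h(x)-g(y)h(y)=g(x)(h(x)-h(y))+h(y)(g(x)-g(y))$;
\item the bound $|g(x)-g(y)|\le C\min\{1,\|x-y\|_2\}$, whose kernel is integrable because $0<t<1$;
\item for $\beta\neq 0$, control of the $W_p^t$-seminorm of $h=D^{\alpha-\beta}w\in W_p^1(\Omega)$.
\end{itemize}
The Lipschitz hypothesis enters only through this last step (or the extension route). Even that could be avoided by localizing, since $D^\beta\varphi$ is supported in a compact subset of $\Omega$. The citation route is shorter and places the lemma in the distributional framework of \cite{Behzadan_loc} without any of these estimates.
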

    A proof for this can be found in \cite[Theorem 100]{Behzadan_loc}. For
    later, we note that the if direction can simply be established by
    setting $w=\Gamma f$ with a function $\Gamma\in
    C_{c}^{\infty}(\Omega)$ satisfying $\Gamma\equiv 1$ on an open
    neighborhood containing $\overline{V}$.

%%%%%%%%%%%%%%%%%%%%%%%%%%%%%%%%%%%%%%%%%%%%%%%
\subsection{Kernel-based approximation of functions defined  on
  manifolds}\label{Kernels_on_M}   
Compared to the situation in Section \ref{Klassisch_Skalar}, we are now
interested in domains $\Omega=\cM$, where $\cM$ is a smooth, compact and connected
$m$-dimensional embedded submanifold of $\R^{d}$. Of course, 
the operators $I_X$ and $Q_{X,\lambda}$ from Definition
\ref{approxOperators}  are well-defined if $\Omega=\cM$ and now become
operators from $C(\cM)$   to $V_X$ and Lemma \ref{lem:kernelsolution}
still holds. However, we now have 
to specify what kind of  positive definite kernels on 
$\cM$ we want to use. Although there are different ways to construct
kernels, particularly for specific manifolds (see for example \cite{FW20, FW29,
  FW24, FW14}), we will focus on kernels which arise by restricting positive
definite kernels defined on the ambient space $\R^{d}$ to the embedded submanifold
$\cM$, following particularly \cite{Fuselier-Wright-12-1}.

Consider a positive definite kernel $K:\R^{d}\times \R^{d}\to \R$ of
the form $K(x,y)=\Phi(x-y)=\phi(\|x-y\|_2)$. Restricting it to the compact
submanifold $\cM\subseteq\R^{d}$, gives the kernel  
\[
\Psi(\cdot,\cdot):=K(\cdot,\cdot)|_{\cM\times \cM}:\cM\times \cM\to
\R,
\]
which is obviously positive definite on $\cM$. For our error analysis
we need to know the corresponding reproducing kernel Hilbert
space. Fortunately, it turns out that if we start with a kernel $K$ of
this form which is the reproducing kernel of a Sobolev space then the
restricted kernel $\Psi$ will also be the reproducing kernel of a Sobolev
space on the manifold  $\cM$. To be more precise, the corresponding result to
Proposition \ref{locns} comes from  
\cite[Theorem 5]{Fuselier-Wright-12-1}.

\begin{proposition}\label{Native_on_M}
  Let $\tau>d/2$, $m\in\N$ be given and set $\sigma = \tau-(d-m)/2$.
  Assume $\Phi_\tau\in L_1(\R^d)\cap C(\R^d)$ has a Fourier
  transform $\widehat{\Phi_\tau}$ satisfying (\ref{ftdecay}). Let
  $\cM\subseteq\R^d$ be a compact, connected and 
  smooth $m$-dimensional submanifold.  Let $\Psi_\sigma:\cM\times\cM\to\R$ be
  defined by $\Psi_\sigma(x,y)=\Phi_\tau(x-y)$, $x,y\in\cM$. Then, there
  exists an inner product $\langle
  \cdot,\cdot\rangle_{\Psi_\sigma}:H^\sigma(\cM)\times
  H^\sigma(\cM)\to\R$ on  $H^\sigma(\cM)$ such that
  $\Psi_\sigma$ is the reproducing
  kernel of $H^\sigma(\cM)$ with respect to this inner product. The
  norm $\|\cdot\|_{\Psi_\sigma}$ induced by this inner product is
  equivalent to the  norm on $H^\sigma(\cM)$, i.e. there
  are constants $C_1,C_2>0$ such that
  \[
  C_1 \|u\|_{\Psi_\sigma} \le \|u\|_{H^\sigma(\cM)} \le C_2
  \|u\|_{\Psi_\sigma}, \qquad u\in H^\sigma(\cM).
  \]
  
\end{proposition}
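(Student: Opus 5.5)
The plan is to identify the native space of the restricted kernel $\Psi_\sigma$ as a trace space and then use a trace theorem for submanifolds. Write $\cN_{\Phi_\tau}(\R^d)=H^\tau(\R^d)$, with the Fourier-type norm $\|u\|^2_{\Phi_\tau}=(2\pi)^{-d/2}\int|\widehat u|^2/\widehat{\Phi_\tau}\,d\omega$. By (\ref{ftdecay}) this norm is equivalent to the standard $H^\tau(\R^d)$ norm, and $\Phi_\tau(\cdot-y)$ is its reproducing kernel.

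The first step is a standard RKHS fact: the restriction of a reproducing kernel to a subset $\cM$ is the reproducing kernel of the space $\{u|_\cM: u\in H^\tau(\R^d)\}$. This space carries the quotient norm
\[
\|g\|_{\Psi_\sigma}:=\min\{\|u\|_{\Phi_\tau}: u\in H^\tau(\R^d),\ u|_\cM=g\}.
\]
The minimum is attained by the orthogonal projection onto the orthogonal complement of $\{u: u|_\cM=0\}$, which is a closed subspace since point evaluations are continuous. The reproducing property follows by checking it first on $\spn\{\Phi_\tau(\cdot-y):y\in\cM\}$, which lies in that complement, and then using density. This gives an inner product space with reproducing kernel $\Psi_\sigma$. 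What remains is to show that this trace space coincides with $H^\sigma(\cM)$, with equivalent norms.

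The second step is a trace theorem. The trace operator $T:H^\tau(\R^d)\to H^{\sigma}(\cM)$, $\sigma=\tau-(d-m)/2$, should be bounded and surjective, with a bounded right inverse (extension operator) $E$. Given this, the bound $\|Tu\|_{H^\sigma(\cM)}\le C\|u\|_{H^\tau}$ applied to the minimizer yields $\|g\|_{H^\sigma(\cM)}\le C_2\|g\|_{\Psi_\sigma}$. In the other direction, $\|g\|_{\Psi_\sigma}\le\|Eg\|_{\Phi_\tau}\le C\|g\|_{H^\sigma(\cM)}$, which gives the constant $C_1$.

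To prove the trace theorem, I would use the admissible augmented atlas together with tubular neighbourhoods. Near each $U_j$, I would build a diffeomorphism from an open set of $\R^d$ to a neighbourhood of $\psi_j^{-1}(B_j)$ that straightens $\cM$ to $\R^m\times\{0\}$, using $\psi_j^{-1}$ and a smooth local frame of the normal bundle $N\cM$. After cutting off with the $\chi_j$, the problem reduces to the flat trace theorem $H^\tau(\R^d)\to H^{\tau-(d-m)/2}(\R^m)$. For restriction to a codimension-$(d-m)$ plane, this follows by integrating out the normal frequencies:
\[
\widehat{Tu}(\xi)=(2\pi)^{-(d-m)/2}\int\widehat u(\xi,\eta)\,d\eta .
\]
Cauchy--Schwarz then gives $\int(1+|\xi|^2+|\eta|^2)^{-\tau}d\eta\simeq(1+|\xi|^2)^{-\tau+(d-m)/2}$, which is finite precisely because $\tau>(d-m)/2$; this holds since $\tau>d/2$. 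The extension is $\widehat{Eg}(\xi,\eta)=\widehat g(\xi)\,\theta\big(\eta/(1+|\xi|^2)^{1/2}\big)(1+|\xi|^2)^{-(d-m)/2}$ for a fixed bump $\theta$. Transporting these operators back needs invariance of $H^\tau$ under diffeomorphisms and under multiplication by smooth cutoffs, which holds for all real $\tau\ge0$. It also needs the chart-based norm of Definition~\ref{Sobolev_Spaces_M}, together with Lemma~\ref{Local_representation} so the localized pieces lie in the right spaces.

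I expect the main obstacle to be this localization for non-integer $\sigma$. One must check that gluing via the partition of unity is compatible with the fractional chart norms. One must also check that straightening near the overlaps does not lose the independence-of-atlas equivalence from \cite{Behzadan_Holst}. I would handle this by working with a single augmented atlas and invoking its Theorem~87 for atlas independence.
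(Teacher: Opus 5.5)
Your proposal is correct and follows the same route as the proof behind this statement. The paper gives no argument of its own and quotes \cite[Theorem 5]{Fuselier-Wright-12-1}, which combines two ingredients: the native space of the restricted kernel is the trace space $\{u|_{\cM} : u\in H^\tau(\R^d)\}$ with the minimal-extension norm, and the trace operator $H^\tau(\R^d)\to H^{\sigma}(\cM)$ is bounded with a bounded right inverse.

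The only difference is that you also sketch the trace theorem itself, which in the cited work is taken from the function-space literature. Your sketch is sound, and the obstacle you anticipate does not arise. You only need to straighten $\cM$ over the compact sets $\psi_j(\operatorname{supp}\chi_j)\subseteq B_j$ and can work with the single given atlas throughout. So neither a gluing argument for the fractional chart norms nor the atlas-independence result from \cite{Behzadan_Holst} is needed.
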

Next, we analyze the approximation processes in this setting.
%\subsection{Errors for interpolation with restricted kernels}
%Next, we study the interpolation and penalized 
For $X=\{x_1,\ldots,x_N\}\subseteq\cM$ let
$V_X=\spn\{K(\cdot,x_1),\ldots,K(\cdot,x_N)\}=
\spn\{\Psi(\cdot,x_1),\ldots, \Psi(\cdot,x_N)\}$, using the notation above.
We now want to collect and prove error estimates for the interpolation
operator $I_X:C(\cM)\to V_X$ and the approximation operator
$Q_{X,\lambda}:C(\cM)\to V_X$. To formulate them, we
need to introduce the fill distance, the separation radius and their ratio
for a set $X=\{x_1,\ldots,x_N\}\subseteq \cM$ in the
manifold setting by
\[
  h_{X,\cM}:=\sup_{x\in \cM}\min_{x_i\in X} d_\cM(x,x_i),
  \quad q_X:=\frac{1}{2}\min_{x_i\ne x_j} d_{\cM}(x_i,x_j), \quad
  \rho_{X,\cM}:=h_{X,\cM}/q_{X}.
%  \label{Fill_distance_M}
\]
We obviously have $q_X\le h_{X,\cM}$. It will be our understanding
that we will use the intrinsic metric $d_\cM$ for these quantities if
the data points are on $\cM$ and the Euclidean distance otherwise. As
usual, we will call a data 
set $X\subseteq \cM$ quasi-uniform if there is a constant $c_{qu}\ge 1$
close to one such that $h_{X,\cM}\le c_{qu} q_X$. Obviously, such a
constant always exists but can become arbitrarily large. In the
context of approximation results and sampling inequalities, the concept
of quasi-uniform sets is best reflected if one thinks not about only one
data set $X$ but a sequence of sets $(X_j)$ with $h_{X_j,\cM}\le
c_{qu} q_{X_j}$ for all $j$.

While the interpolation operator was
comprehensively investigated in \cite{Fuselier-Wright-12-1}, nothing seems
to be known for the penalized least-squares operator in this context.
For the interpolation operator, Corollary 13 and Theorem 17 in
\cite{Fuselier-Wright-12-1} yield the following bounds. Note that
these bounds were derived by employing sampling inequalities and that we have
adjusted the range of $\mu$ according to the more general sampling
inequalities cited in Theorem \ref{Sampling_Ineq}.

\begin{theorem}\label{Fuselier-Wright-Final}
 Let $\Phi\in C(\R^{d})\cap L_1(\R^{d})$, satisfying  \eqref{ftdecay}
 with $\tau>d/2$. Let $\sigma=\tau-(d-m)/2$ and $1\leq q\leq \infty$.
 Then, there are constants $h_0>0$ and $C>0$ such that for all
 $X\subseteq\cM$ with $h_{X,\cM}<h_0$ and all  $f\in H^\beta(\cM)$ with
 $\beta>m/2$ the
 error between $f$ and its interpolant $I_Xf$ using the restricted
 kernel $\Psi$ can be bounded as follows.
\begin{enumerate}
\item   If $\beta\ge \sigma$ then
\[
\|f-I_Xf\|_{W_q^{\mu}(\cM)}\leq Ch_{X,\cM}^{\sigma-\mu-m(1/2-1/q)_+}\|f\|_{H^\sigma(\cM)}
\]
for all $0\le \mu\le \ell(\sigma,m,q)$ with $\mu\in\N_0$ if $q=\infty$.
\item If $\beta<\sigma$ then
\[
\|f-I_Xf\|_{W_q^{\mu}(\cM)}\leq
Ch_{X,\cM}^{\beta-\mu-m(1/2-1/q)_+}\rho_{X,\cM}^{\sigma-\beta}\|f\|_{H^{\beta}(\cM)}
\]
for all $0\le \mu\le \ell(\beta,m,q)$ with $\mu\in\N_0$ if $q=\infty$.
\end{enumerate}
\end{theorem}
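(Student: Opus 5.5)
The plan is to transfer everything to the charts and apply the Euclidean sampling inequality, Theorem \ref{Sampling_Ineq}, on each ball $B_j=\psi_j(U_j)\subseteq\R^m$. We first need a manifold sampling inequality
\[
|v|_{W_q^\mu(\cM)}\le C\left(h_{X,\cM}^{\sigma-\mu-m(1/2-1/q)_+}\|v\|_{H^\sigma(\cM)}+h_{X,\cM}^{m/\gamma-\mu}\|v\|_{\ell_p(X)}\right)
\]
for $v\in H^\sigma(\cM)$. To prove it, fix an admissible augmented atlas. Since $\operatorname{supp}\chi_j$ is compact in $U_j$, we choose a slightly larger Lipschitz domain $\Omega_j$ (a smaller concentric ball) with $\psi_j(\operatorname{supp}\chi_j)\subseteq\Omega_j\subset\subset B_j$. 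By Lemma \ref{Local_representation} and Lemma \ref{Characterization_locally_Sobolev}, $v\circ\psi_j^{-1}$ restricted to $\Omega_j$ lies in $H^\sigma(\Omega_j)$ with norm bounded by $C\|v\|_{H^\sigma(\cM)}$. The charts are smooth diffeomorphisms on compact sets, so they are bi-Lipschitz with respect to $d_\cM$. Hence $\psi_j(X\cap U_j)$ has fill distance in $\Omega_j$ at most $C h_{X,\cM}$, once $h_{X,\cM}$ is small enough that points close to $\overline{\Omega_j}$ come from $U_j$. Applying Theorem \ref{Sampling_Ineq} with $d$ replaced by $m$ and $\tau$ by $\sigma$ on $\Omega_j$ gives the local estimate. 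The bound $\|(\chi_j v)\circ\psi_j^{-1}\|_{W_q^\mu}\le C\|v\circ\psi_j^{-1}\|_{W_q^\mu(\Omega_j)}$ (Leibniz rule) then sums these to the global inequality. Local point samples are bounded by the global $\ell_p(X)$ norm.

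For part 1 ($\beta\ge\sigma$), apply the manifold sampling inequality to $v=f-I_Xf$. This $v$ vanishes on $X$, so the discrete term drops. The term $\|v\|_{H^\sigma(\cM)}$ is handled by the minimal-norm property in the native space from Proposition \ref{Native_on_M}: $I_Xf$ is the orthogonal projection of $f$ onto $V_X$ with respect to $\langle\cdot,\cdot\rangle_{\Psi_\sigma}$. Therefore $\|f-I_Xf\|_{\Psi_\sigma}\le\|f\|_{\Psi_\sigma}$, and norm equivalence yields $\|f-I_Xf\|_{H^\sigma(\cM)}\le C\|f\|_{H^\sigma(\cM)}$. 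Lower-order terms of the full $W_q^\mu$ norm are covered because the inequality holds for each $0\le\mu'\le\mu$ and $h\le h_0$.

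For part 2 ($\beta<\sigma$), the plan follows the band-limited/escape argument of Narcowich–Ward–Wendland, transplanted to $\cM$. For $f\in H^\beta(\cM)$, construct $f_X\in H^\sigma(\cM)$ with $f_X|_X=f|_X$, $\|f-f_X\|_{H^\beta(\cM)}\le C\|f\|_{H^\beta(\cM)}$, and $\|f_X\|_{H^\sigma(\cM)}\le Cq_X^{\beta-\sigma}\|f\|_{H^\beta(\cM)}$. The construction goes chart by chart: in each $\Omega_j$, take the Euclidean construction for $(\chi_j f)\circ\psi_j^{-1}$, extended to $\R^m$, with separation comparable to $q_X$. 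Then pull back and sum using a cutoff.

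Then $I_Xf=I_Xf_X$, and
\[
\|f-I_Xf\|_{H^\beta}\le\|f-f_X\|_{H^\beta}+\|f_X-I_Xf_X\|_{H^\beta}\le C\|f\|_{H^\beta}+C\|f_X\|_{H^\sigma}\le Cq_X^{\beta-\sigma}\|f\|_{H^\beta}.
\]
Finally, apply the manifold sampling inequality at smoothness $\beta$ to $f-I_Xf$, which vanishes on $X$. This gives $h^{\beta-\mu-m(1/2-1/q)_+}q_X^{\beta-\sigma}$. Writing $q_X^{\beta-\sigma}=h^{\beta-\sigma}\rho^{\sigma-\beta}$ would produce an extra factor $h^{\beta-\sigma}$, which must be avoided. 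The correct bookkeeping is to split $f-I_Xf=(f-f_X)+(f_X-I_Xf_X)$ before sampling. The first piece has $H^\beta$ norm $\le C\|f\|_{H^\beta}$ and vanishes on $X$. The second piece is estimated via the sampling inequality at smoothness $\sigma$ with power $h^{\sigma-\mu-\dots}$, times $q_X^{\beta-\sigma}$, giving $h^{\beta-\mu-\dots}\rho^{\sigma-\beta}$.

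The main obstacle is the chart-local construction of $f_X$ on the manifold and the uniform comparison of chart fill and separation distances with $h_{X,\cM}$ and $q_X$. I would try first to cite the Euclidean construction on $\R^m$ applied to compactly supported localizations, and glue with the partition of unity.
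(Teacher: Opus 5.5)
Your proof is correct. It reconstructs the argument that the paper only imports from Fuselier--Wright (Corollary 13, Theorem 17). That argument consists of a chartwise transfer of Theorem \ref{Sampling_Ineq} to $\cM$, which gives the widened $\mu$-range the paper mentions automatically. For $\beta\ge\sigma$ it uses the Pythagorean bound $\|f-I_Xf\|_{\Psi_\sigma}\le\|f\|_{\Psi_\sigma}$. For $\beta<\sigma$ it splits $f-I_Xf=(f-f_X)+(f_X-I_Xf_X)$ and applies sampling at smoothness $\beta$ and $\sigma$ respectively, which gives $h_{X,\cM}^{\sigma-\mu-m(1/2-1/q)_+}q_X^{\beta-\sigma}=h_{X,\cM}^{\beta-\mu-m(1/2-1/q)_+}\rho_{X,\cM}^{\sigma-\beta}$. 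The display just before that split, bounding $\|f-I_Xf\|_{H^\beta}$ by $q_X^{\beta-\sigma}$, is a dead end, and you rightly abandon it.

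The only place where you genuinely choose a route is the surrogate $f_X$. Your chartwise construction works, but it needs three pieces of care:
\begin{enumerate}
\item Take $\eta_j\in C_c^\infty(B_j)$ with $\eta_j\equiv 1$ near $\psi_j(\operatorname{supp}\chi_j)$.
\item Impose band-limited interpolation of $g_j=(\chi_jf)\circ\psi_j^{-1}$ at \emph{all} points of $\psi_j(X)\cap\operatorname{supp}\eta_j$; outside $\psi_j(\operatorname{supp}\chi_j)$ this reproduces $0$. Setting $F_j=(\eta_jg_{j,\kappa})\circ\psi_j$ then gives $F_j=\chi_jf$ on $X$ and $\chi_jf-F_j=(\eta_j(g_j-g_{j,\kappa}))\circ\psi_j$.
\item Compare the local separation with $q_X$ on a closed ball in $B_j$ containing $\operatorname{supp}\eta_j$, where $\psi_j^{-1}$ is Lipschitz into $(\cM,d_\cM)$.
\end{enumerate}

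In the restricted-kernel setting there is a shorter route that avoids the gluing. Extend $f$ by a bounded right inverse of the restriction $H^{\beta+(d-m)/2}(\R^d)\to H^\beta(\cM)$; note $\beta+(d-m)/2>d/2$. Apply the Narcowich--Ward--Wendland band-limited interpolation in $\R^d$ at $X$, whose Euclidean separation is comparable to $q_X$ because $\|\cdot\|_2$ and $d_\cM$ are equivalent. Then restrict to $\cM$. Since $\tau-(\beta+(d-m)/2)=\sigma-\beta$, the Bernstein bound and the trace theorem give $\|f_X\|_{H^\sigma(\cM)}\le Cq_X^{\beta-\sigma}\|f\|_{H^\beta(\cM)}$ and $\|f-f_X\|_{H^\beta(\cM)}\le C\|f\|_{H^\beta(\cM)}$ at once. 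Your version uses only intrinsic chart data and results on $\R^m$, while the ambient one reuses the trace machinery behind Proposition \ref{Native_on_M}.

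A minor point: your global sampling inequality omits the factor $\rho_{X,\cM}^{(\mu-m/\gamma)_+}$ that appears in Theorem \ref{thm:samplingManifold}. For $\mu>m/\gamma$ this does not follow from your chartwise argument as written, since local fill distances can be much smaller than $h_{X,\cM}$; an extra thinning step would be needed. You only apply the inequality to functions vanishing on $X$, though, so it does not matter here.
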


To derive corresponding estimates for the penalized least-squares
operator, we first need to carry the sampling inequality from Theorem
\ref{Sampling_Ineq} over to the manifold setting. For this, 
we need the following result on the connection between local fill
distances and global ones.

\begin{lemma}\label{lem:fillequivalence}
  Let $\cM\subseteq \R^d$ be a smooth, connected and
  compact $m$-dimensional submanifold with admissible atlas $\cA_{\cM} =
  \{(U_j,\psi_j) : 1\le j\le 
  L\}$ with charts $\psi_j:U_j\subseteq\cM\to
  B_j:=\psi_j(U_j)\subseteq\R^m$. Set $X_j:=X\cap U_j$ and
  $Y_j:=\psi_j(X_j)$
  for $1\le j\le N$. Then, there are constants
  $c_1,c_2>0$, depending on the atlas such that
\[
  c_1 h_{Y_j,B_j} \le h_{X_j, U_j} \le c_2 h_{Y_j,B_j}, \qquad 1\le
  j\le L.
  \]
  Moreover, the maximum of the local fill distances is equivalent to
  the global one, i.e.
  \[
  c_1 h_{X,\cM} \le \max_{1\le j\le L} h_{X_j,U_j}\le c_2 h_{X,\cM},
  \]
where the last upper bound only holds for sufficiently small
$h_{X,\cM}\le h_0$.
  Finally, using the mesh ration $\rho_{X,\cM} = h_{X,\cM}/q_X$, the
  fill distances of  $X_j$ and $Y_j$, respectively,  
  are all comparable to $h_{X,\cM}$,  if $h_{X,\cM}\le h_0$, i.e.
  \[
    \rho_{X,\cM}^{-1} h_{X,\cM} \le  h_{X_j,U_j}\le c_2 h_{X,\cM}, \qquad 1\le j\le
    L.
    \]

  \end{lemma}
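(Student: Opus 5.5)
The argument rests on one geometric fact: on each chart the intrinsic metric $d_\cM$ and the Euclidean metric of the parameter ball are bi-Lipschitz equivalent. Since $\cM$ is compact and the atlas is finite and admissible, I would first shrink it slightly if necessary (e.g.\ use the charts $\psi_j$ restricted to preimages of concentric balls, or exploit that $\psi_j^{-1}$ extends smoothly to a neighbourhood of $\overline{B_j}$) so that $\psi_j^{-1}$ and $\psi_j$ are smooth up to the boundary. Then, with $D\psi_j^{-1}$ of full rank on the compact closure, we obtain constants $a_j,b_j>0$ such that
\[
a_j\|\psi_j(x)-\psi_j(y)\|_2 \le d_\cM(x,y) \le b_j\|\psi_j(x)-\psi_j(y)\|_2,\qquad x,y\in U_j.
\]
For the upper bound we take the straight segment in the convex ball $B_j$ and push it forward by $\psi_j^{-1}$; its length is at most $\sup\|D\psi_j^{-1}\|\,\|\psi_j(x)-\psi_j(y)\|_2$. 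For the lower bound we use $d_\cM(x,y)\ge\|x-y\|_2$ together with the smooth left inverse of the immersion, which gives $\|\psi_j(x)-\psi_j(y)\|_2\le C\|x-y\|_2$. Here the local fill distance $h_{X_j,U_j}$ is interpreted with $d_\cM$. The first claim, $c_1h_{Y_j,B_j}\le h_{X_j,U_j}\le c_2h_{Y_j,B_j}$, then follows immediately by taking sup--min on both sides.

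For the global comparison the lower bound is easy. Any $x\in\cM$ lies in some $U_j$, so $\min_{x_i\in X}d_\cM(x,x_i)\le \min_{x_i\in X_j}d_\cM(x,x_i)\le h_{X_j,U_j}$, which gives $h_{X,\cM}\le\max_j h_{X_j,U_j}$. The upper bound is the main obstacle. A point $x\in U_j$ near the boundary of $U_j$ may have its nearest data point outside $U_j$, and then $X_j$ might cover $U_j$ poorly or even be empty. My plan is to work in the ball $B_j$. Let $y=\psi_j(x)$ and $r=c\,h_{X,\cM}$ with $h_{X,\cM}\le h_0$ small. Since $B_j$ is a ball, it satisfies an interior cone condition, so there is a point $y'\in B_j$ with $\|y-y'\|_2\le r$ and $B(y',r)\subseteq B_j$; this holds for $r$ below a threshold determined by the radius of $B_j$, which fixes $h_0$. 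The geodesic ball in $\cM$ of radius $r/b_j$ (up to constants) around $x'=\psi_j^{-1}(y')$ lies inside $\psi_j^{-1}(B(y',r))\subseteq U_j$. Choosing the constant $c$ so that this intrinsic radius exceeds $h_{X,\cM}$ forces a data point of $X$ into it, hence into $U_j$. Consequently $d_\cM(x,X_j)\le d_\cM(x,x')+h_{X,\cM}\le C h_{X,\cM}$. To make this work I need $d_\cM(x,x')\le b_j r$ and need the intrinsic ball $\{z:d_\cM(x',z)<h_{X,\cM}\}$ to lie in $\psi_j^{-1}(B(y',r))$. The latter requires a bi-Lipschitz estimate valid for points not known a priori to lie in $U_j$; I would obtain it from the Lebesgue number of the cover, or from a geodesic argument: a curve leaving $\psi_j^{-1}(B(y',r))$ has length at least $a_j r$ by the lower Lipschitz bound applied up to the exit point. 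This gives $\max_j h_{X_j,U_j}\le c_2h_{X,\cM}$.

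The final claim combines the previous bound with a trivial lower bound. The upper estimate $h_{X_j,U_j}\le c_2h_{X,\cM}$ is the one just proved. For the lower estimate, $h_{X_j,U_j}\ge q_X$ holds because $U_j$ is open and contains points other than data points: taking $x$ near a data point $x_i\in X_j$ and moving toward the midpoint of $x_i$ and another point, or simply noting that the sup over $U_j$ is attained at points whose distance to the data is at least $q_X$ provided $U_j$ is not covered by the disjoint balls of radius $q_X$, yields $h_{X_j,U_j}\ge q_X=\rho_{X,\cM}^{-1}h_{X,\cM}$. The cleanest route is the following. Pick $x_i\in X_j$ and a curve in $U_j$ starting at $x_i$ with length exceeding $q_X$, which exists since $U_j$ has diameter bounded below and $h_{X,\cM}\le h_0$ is small. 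By continuity, some point $z$ on this curve satisfies $\min_k d_\cM(z,x_k)\ge q_X$, since the sets $\{z: d_\cM(z,x_k)<q_X\}$ are pairwise disjoint open sets and cannot cover a connected curve that leaves one of them. This gives $h_{X_j,U_j}\ge q_X$ and completes the chain.
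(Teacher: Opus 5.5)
Your argument is correct in substance but takes a different route from the paper. The paper does not prove the first two statements itself: the chart comparison and the equivalence of $\max_j h_{X_j,U_j}$ with $h_{X,\cM}$ are quoted from Proposition~7 and Theorem~8 of Fuselier--Wright. The last lower bound is then the chain $h_{X,\cM}=\rho_{X,\cM}q_X\le\rho_{X,\cM}q_{X_j}\le\rho_{X,\cM}h_{X_j,U_j}$, using $X_j\subseteq X$ and the inequality $q_{X_j}\le h_{X_j,U_j}$, which the paper treats as obvious.

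You instead derive everything from a bi-Lipschitz comparison of $d_\cM$ with the Euclidean metric of $B_j$. Your interior-ball step makes explicit how data near $\partial U_j$ are found and where $h_0$ comes from: you push $\psi_j(x)$ inward by at most $r$ so that $B(y',r)\subseteq B_j$, then use the exit-length bound $a_j r$ to trap a point of $X$ inside $U_j$. Your disjoint-balls/connected-curve argument then gives $h_{X_j,U_j}\ge q_X$ directly. That argument is more robust than the paper's chain. It needs no assumption $\#X_j\ge 2$, without which $q_{X_j}$ is undefined. It also supplies the connectedness reasoning that $q_{X_j}\le h_{X_j,U_j}$ tacitly requires, since the midpoint of a minimizing geodesic between two points of $X_j$ need not lie in $U_j$. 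The cost is length, plus some routine details you only sketch. For example, the Euclidean Lipschitz bound for $\psi_j$ on all of $U_j$ needs a compactness (Lebesgue number) argument on top of the local left inverse.

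There are two points to fix.

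First, ``shrink the atlas if necessary'' is not a without-loss-of-generality step inside this lemma, because it changes $U_j$, $X_j$ and $B_j$, which appear in the statement. It is an additional hypothesis, and a necessary one. On $S^1$, take an admissible chart onto $B_j=(-1,1)$ that behaves like $\theta\mapsto -1+\sqrt{\theta}$ near one end. For equispaced data with $h_{X,\cM}=h$ it gives $h_{Y_j,B_j}\gtrsim h^{1/2}$ while $h_{X_j,U_j}\le 2h$, so the first inequality fails. You should state the hypothesis explicitly, e.g.\ that each $\psi_j^{-1}$ extends smoothly to a neighbourhood of $\overline{B_j}$. The paper's proof must rely on the same kind of requirement, hidden in the cited results. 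Nothing is lost later, since each chart of an admissible augmented atlas can be shrunk to $\psi_j^{-1}$ of a concentric ball still containing $\psi_j(\operatorname{supp}\chi_j)$.

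Second, in the last step a curve of length exceeding $q_X$ need not leave $\{z: d_\cM(z,x_i)<q_X\}$. What you need is a curve in $U_j$ from $x_i$ to some $w\in U_j$ with $d_\cM(x_i,w)\ge q_X$. Such a curve exists because $U_j$ is path-connected with positive intrinsic diameter and $q_X\le h_{X,\cM}\le h_0$ is small.
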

\begin{proof}
The first two statements follow immediately from Proposition 7 and
Theorem 8 from \cite{Fuselier-Wright-12-1}. For the last statement we
note that the second statement already yields $h_{Xj,U_j} \le c_2
h_{X,\cM}$. For the lower bound we note 
$
 h_{X,\cM} =\rho_{X,\cM} q_X \le  \rho_{X,\cM} q_{X_j} \le \rho_{X,\cM} h_{X_j,U_j}
$
for $1\le j\le L$.
\end{proof}
With this at our hand, we can now formulate and prove the required
sampling inequality for functions from $H^\sigma(\cM)$.

\begin{theorem}\label{thm:samplingManifold} 
Let $\cM$ be a compact, connected, smooth $m$-dimensional submanifold
of $\R^d$ with  admissible augmented atlas $\cA_{\cM} = \{
(U_j,\psi_j,\chi_j) : 1\le j\le L\}$. Let $p,q\in [1,\infty]$ and
$\gamma:=\max\{2,p,q\}$. Let $\sigma>m/2$.  Then, there exist constants $h_0>0$
 and $C>0$ such that for all $X=\{x_1,\ldots,x_N\}\subseteq
\cM$ 
with $h_{X,\cM}\le h_0$ and all $u\in H^\sigma(\cM)$,
we have
\[
|u|_{W_q^\mu(\cM)} \le C\left(h_{X,\cM}^{\sigma-\mu-m(1/2-1/q)_+}
|u|_{H^\sigma(\cM)} + \rho_{X,\cM}^{(\mu-m/\gamma)_+}
h_{X,\cM}^{m/\gamma-\mu} \|u\|_{\ell_p(X)}\right)
\]
for all $0\le \mu\le \ell(\sigma,m,q)$ with $\mu\in\N_0$ if 
$q=\infty$. 
\end{theorem}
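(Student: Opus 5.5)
The plan is to localize with the admissible augmented atlas, apply the Euclidean sampling inequality (Theorem \ref{Sampling_Ineq}) on each ball $B_j=\psi_j(U_j)\subseteq\R^m$, and then reassemble. By Definition \ref{Sobolev_Spaces_M},
\[
|u|_{W_q^\mu(\cM)}\le \sum_{j=1}^L \|(\chi_j u)\circ\psi_j^{-1}\|_{W_q^\mu(B_j)} .
\]
So it suffices to bound each term $v_j:=(\chi_j u)\circ\psi_j^{-1}$. Since $\operatorname{supp}\chi_j\subseteq U_j$ is compact, $\chi_j\circ\psi_j^{-1}$ is a smooth function with compact support in $B_j$. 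Hence $v_j\in H^\sigma(B_j)$ with $\|v_j\|_{H^\sigma(B_j)}\le C\|u\|_{H^\sigma(\cM)}$, by Lemma \ref{Local_representation} with $\xi=\chi_j\circ\psi_j^{-1}$.

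\textbf{Step 1: local sampling inequality.} I apply Theorem \ref{Sampling_Ineq} on $B_j$ (a ball, hence Lipschitz) with dimension $m$, smoothness $\sigma>m/2$, and the point set $Y_j=\psi_j(X_j)$. This requires $h_{Y_j,B_j}\le h_0$, which follows from Lemma \ref{lem:fillequivalence}: $h_{Y_j,B_j}\le c_1^{-1}h_{X_j,U_j}\le c_1^{-1}c_2 h_{X,\cM}$, so it holds for $h_{X,\cM}$ small. The result is
\[
\|v_j\|_{W_q^\mu(B_j)}\le C\bigl(h_{Y_j,B_j}^{\sigma-\mu-m(1/2-1/q)_+}\|v_j\|_{H^\sigma(B_j)}+h_{Y_j,B_j}^{m/\gamma-\mu}\|v_j\|_{\ell_p(Y_j)}\bigr).
\]
I apply it to the full norm: Theorem \ref{Sampling_Ineq} controls seminorms of all orders $0\le\mu'\le\mu$, and summing these gives the norm, with the lower-order terms dominated for $h\le 1$.

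\textbf{Step 2: convert fill distances and data terms.} The first exponent is nonnegative since $\mu\le\ell(\sigma,m,q)$. Therefore $h_{Y_j,B_j}\lesssim h_{X,\cM}$ gives $h_{Y_j,B_j}^{\sigma-\mu-m(1/2-1/q)_+}\le Ch_{X,\cM}^{\sigma-\mu-m(1/2-1/q)_+}$.

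The second exponent $m/\gamma-\mu$ may have either sign, and this is where the mesh ratio enters.
\begin{itemize}
\item If $\mu\le m/\gamma$, the upper bound $h_{Y_j,B_j}\le Ch_{X,\cM}$ suffices.
\item If $\mu>m/\gamma$, I need a lower bound. The last statement of Lemma \ref{lem:fillequivalence}, combined with its first statement, gives $h_{Y_j,B_j}\ge c_2^{-1}h_{X_j,U_j}\ge c_2^{-1}\rho_{X,\cM}^{-1}h_{X,\cM}$. Then $h_{Y_j,B_j}^{m/\gamma-\mu}\le C\rho_{X,\cM}^{\mu-m/\gamma}h_{X,\cM}^{m/\gamma-\mu}$.
\end{itemize}
This produces exactly the factor $\rho_{X,\cM}^{(\mu-m/\gamma)_+}$.

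For the discrete term, $v_j(\psi_j(x_i))=\chi_j(x_i)u(x_i)$ with $0\le\chi_j\le 1$, so $\|v_j\|_{\ell_p(Y_j)}\le\|u\|_{\ell_p(X)}$. Summing over the finitely many $j$ gives the claim with $\|u\|_{H^\sigma(\cM)}$ in place of the seminorm.

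\textbf{Main obstacle: replacing the norm by the seminorm.} I expect the step from $\|u\|_{H^\sigma(\cM)}$ to $|u|_{H^\sigma(\cM)}$ to be the hard one. Cutting off with $\chi_j$ mixes in lower derivatives, so local seminorms of $v_j$ are not controlled by a global seminorm. If the paper's seminorm on $\cM$ is simply the chart-sum of the top-order local seminorms, I would absorb the lower-order part as follows. First bound $\|u\|_{H^{\sigma'}}$ for lower $\sigma'$ by interpolation between $|u|_{H^\sigma}$ and $\|u\|_{L_2}$. Then bound $\|u\|_{L_2}$ itself by the sampling inequality at $\mu=0$, giving $\|u\|_{L_2}\lesssim h^{\sigma-m(\ldots)}\|u\|_{H^\sigma}+h^{m/\gamma}\|u\|_{\ell_p(X)}$. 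For small $h$, a standard absorption argument then removes the lower-order terms.

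Failing that, I would read $|u|_{H^\sigma(\cM)}$ as the norm. This is harmless for the intended application to penalized least squares, which only uses $\|u\|_{H^\sigma(\cM)}$.
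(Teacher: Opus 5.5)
Your proposal is correct and follows essentially the same route as the paper: localize with the admissible augmented atlas, apply Theorem~\ref{Sampling_Ineq} on each ball $B_j$, convert $h_{Y_j,B_j}$ to $h_{X,\cM}$ via Lemma~\ref{lem:fillequivalence} (the lower bound $h_{X_j,U_j}\ge\rho_{X,\cM}^{-1}h_{X,\cM}$ produces the factor $\rho_{X,\cM}^{(\mu-m/\gamma)_+}$ when $\mu>m/\gamma$), and bound the discrete terms using $0\le\chi_j\le 1$. The paper's proof also works with full norms throughout and ends with $\|u\|_{H^\sigma(\cM)}$ on the right, so your norm reading is exactly what is proved there, and your seminorm-absorption step is an optional refinement rather than a needed ingredient.
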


\begin{proof}
  As in the last lemma, we let $X_j:=X\cap U_j$, $B_j:=\psi_j(U_j)$
  and $Y_j:=\psi_j(X_j)$. As  $u\in H^\sigma(\cM)$, we have $(\chi_j u)\circ
\psi_j^{-1}\in H^\sigma(B_j)$ for each chart $\psi_j$, $1\le j\le L$, where
$B_j$ is an open ball in $\R^m$ and, hence, in particular a Lipschitz domain.
Thus, we can apply the sampling inequality from Theorem
\ref{Sampling_Ineq} locally on each 
$B_j$ to obtain 
\[
    \|(\chi_j u)\circ \psi_j^{-1}\|_{W_q^\mu(B_j)}
    \leq C_j\left( h_{Y_j,B_j}^{\sigma-\mu-m(1/2-1/q)_+}\|(\chi_j u)\circ
    \psi_j^{-1}\|_{H^\sigma(B_j)} + h_{Y_j,B_j}^{m/\gamma-\mu}\|(\chi_ju)\circ\psi_j^{-1}\|_{\ell_p(Y_j)}\right)
\]
provided that $h_{Y_j,B_j}<h_0(B_j)$. Continuing under the assumption
\begin{equation}\label{h0_Bed}
h_{Y_j,B_j}\le h_0:=\min_{1\le j\le L} h_0(B_j), \qquad 1\le j\le L,
\end{equation}
we can sum up  the contributions of the finitely many coordinate
domains, yielding
\begin{eqnarray*}
    \|u\|_{W_q^{\mu}(\cM)}&=&\sum_{j=1}^L \|(\chi_ju)\circ \psi_j^{-1}\|_{W_q^{\mu}(B_j)}\\
    &\leq& \sum_{j=1}^LC_j\left( h_{Y_j,B_j}^{\sigma-\mu-m(1/2-1/q)_+}\|(\chi_j u)\circ
    \psi_j^{-1}\|_{H^\sigma(B_j)} + h_{Y_j,B_j}^{m/\gamma-\mu}\|(\chi_ju)\circ\psi_j^{-1}\|_{\ell_p(Y_j)}\right)\\
    &\leq&
    C h_{X,\cM}^{\sigma-\mu-m(1/2-1/q)_+} \|u\|_{H^{\sigma}(\cM)} +
    C\sum_{j=1}^L h_{X_j,U_j}^{m/\gamma-\mu} \|\chi_j u\|_{\ell_p(X_j)},
    %C \rho_{X,\cM}^{\mu-m/\gamma}
    %h_{X,\cM}^{m/\gamma-\mu} \sum_{j=1}^L \|\chi_j u\|_{\ell_p(X_j)},
\end{eqnarray*}
using Lemma \ref{lem:fillequivalence}, which also shows that the
condition (\ref{h0_Bed}) can be rewritten as $h_{X,\cM}\le
h_0$ with a slightly modified $h_0$.  Lemma \ref{lem:fillequivalence} also shows
$h_{X_j,U_j}^{m/\gamma-\mu}\le C h_{X,\cM}^{m/\gamma-\mu}$ if
$m/\gamma-\mu\ge 0$ and $h_{X_j,U_j}^{m/\gamma-\mu} \le C \rho_{X,\cM}^{\mu-m/\gamma}
h_{X,\cM}^{m/\gamma-\mu}$ if $m/\gamma-\mu<0$, which both together can
be written as $h_{X_j,U_j}^{m/\gamma-\mu} \le
\rho_{X,\cM}^{(\mu-m/\gamma)_+}h_{X,\cM}^{m/\gamma-\mu}$.

 Finally, using  $0\le \chi_j(x)\le 1$ for $1\le j\le L$ and $x\in\cM$
 and that we have only a finite number of charts, the
sum of the discrete $\ell_p(X_j)$ norms can be bounded by a constant
times $\|u\|_{\ell_p(X)}$, which yields the stated result.
\end{proof}

Note that the discrete term in the sampling inequality above means the
following. If we have $m/\gamma-\mu\ge 0$ or if the data set is
quasi-uniform then the term simply becomes $h_{X,\cM}^{m/\gamma-\mu}
\|u\|_{\ell_p(X)}$. However, if $m/\gamma-\mu<0$ and the data set is
not quasi-uniform the term is actually given by
$q_{X}^{m/\gamma-\mu}\|u\|_{\ell_p(X)}$. This can also be avoided if
the local fill distances are all of the same size and comparable to $h_{X,\cM}$.

The next result deals with stability of the approximation process
$Q_{X,\lambda}:C(\cM)\to V_X$.  However, as we will later on need to
deal with mismeasured data, we will formulate and prove this in a more
general way. Note that the definition of $Q_{X,\lambda}$ does not
require a function $f\in C(\cM)$. It can also be defined using a vector
$y\in\R^N$, as the operator only needs the data $(x_j,f(x_j))$, $1\le j\le
N$, which can be replaced by data of the form $(x_j,y_j)$, $1\le
j\le N$. In this situation, we will also simply write $Q_{X,\lambda}y$ for
the penalized least-squares approximation using these data.

\begin{lemma}\label{lem:stab}
Let $X=\{x_1,\ldots,x_N\}\subseteq\cM$, $f\in H$, where $H$ is a
reproducing kernel Hilbert space of real-valued functions defined on
$\cM$. Then, for any $\lambda>0$ and $y\in\R^N$,
the penalized least-squares approximation $Q_{X,\lambda}y\in H$ satisfies
\begin{eqnarray*}
\|Q_{X,\lambda}y\|_H & \le & \frac{1}{\sqrt{\lambda}}\|f(X)-y\|_2 +
  \|f\|_H, \\
  \|y-Q_{X,\lambda} y\|_2\ &\le& \|f(X)-y\|_2 + \sqrt{\lambda} \|f\|_H.
\end{eqnarray*}

\end{lemma}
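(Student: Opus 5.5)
The plan is to compare the value of the penalized least-squares functional at its minimizer $s_\lambda := Q_{X,\lambda}y$ with its value at the competitor $f\in H$. By Definition \ref{approxOperators} (with the data $f(x_i)$ replaced by $y_i$), $s_\lambda$ minimizes
\[
J(s) := \|y - s(X)\|_2^2 + \lambda \|s\|_H^2, \qquad s\in H,
\]
where $s(X)=(s(x_1),\ldots,s(x_N))^\transpose$. Existence and uniqueness of the minimizer for $\lambda>0$ were noted after Definition \ref{approxOperators}; this uses only strict convexity of $J$ and does not require positive definiteness of the kernel.

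Since $f\in H$ is admissible, minimality gives
\[
\|y-s_\lambda(X)\|_2^2 + \lambda\|s_\lambda\|_H^2 \le J(f) = \|y-f(X)\|_2^2 + \lambda\|f\|_H^2 .
\]
Both terms on the left are nonnegative, so each is individually bounded by the right-hand side.

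First, dropping the data term yields $\lambda\|s_\lambda\|_H^2 \le \|f(X)-y\|_2^2+\lambda\|f\|_H^2$. Divide by $\lambda$ and apply $\sqrt{a^2+b^2}\le a+b$ for $a,b\ge0$. This gives $\|s_\lambda\|_H \le \lambda^{-1/2}\|f(X)-y\|_2 + \|f\|_H$, which is the first bound.

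Second, dropping the penalty term yields $\|y-s_\lambda(X)\|_2^2 \le \|f(X)-y\|_2^2 + \lambda\|f\|_H^2$. The same elementary inequality then gives $\|y-Q_{X,\lambda}y\|_2 \le \|f(X)-y\|_2 + \sqrt{\lambda}\|f\|_H$. Here $\|y-Q_{X,\lambda}y\|_2$ is read as the Euclidean norm of $y - (Q_{X,\lambda}y)(X)$.

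No step is a real obstacle. The only point needing care is that the minimization is posed with the general data vector $y$ rather than $f(X)$, so that $f$ enters purely as a comparison element of $H$.
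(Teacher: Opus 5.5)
Your proof is correct and follows the paper's argument: compare $J_{\lambda,y}(Q_{X,\lambda}y)$ with $J_{\lambda,y}(f)$, bound each nonnegative summand separately by $J_{\lambda,y}(f)$, and take square roots. You also state explicitly the step $\sqrt{a^2+b^2}\le a+b$, which the paper leaves implicit under ``monotonicity of the square root''.
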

\begin{proof} As $Q_{X,\lambda}y$ minimizes
  $J_{\lambda,y}(s)=\|s(X)-y\|_2^2+\lambda\|s\|_H^2$ over all $s\in H$, we immediately have
  \[
  \max\{\lambda \|Q_{X,\lambda}y\|_H^2, \|y-Q_{X,\lambda}y\|_2^2\} 
  \le J_{\lambda,y} (Q_{X,\lambda}y) \le J_{\lambda,y}(f) =
  \|y-f(X)\|_2^2+\lambda \|f\|_H^2.
  \]
Taking the square root and using the monotonicity of the square root
yields the stated bounds.
\end{proof}

With all these auxiliary results at hand, we can now formulate error
estimates for the penalized least-squares operator. We will  do
this for mismatched data, i.e. instead of observing $y=f(X)$ for a
function $f:\cM\to\R$, we work with the observations $y$ which might only be an
approximation to $f(X)$. However, we will restrict ourselves to
matching smoothness, i.e. we will assume that the target function
belongs to $H^\sigma(\cM)$. Rougher target functions $f\in
H^\beta(\cM)$ can also be handled very similarly to interpolation but
the terms will become more involved, so that we omit the details
here. For the same reason, we will assume that our data set is
quasi-uniform. 

\begin{theorem}\label{thm:quasi-manifold-error}
Let $\Phi\in C(\R^{d})\cap L_1(\R^{d})$, satisfying  \eqref{ftdecay}
 with $\tau>d/2$. Let $\sigma=\tau-(d-m)/2$ and $1\leq q\leq \infty$.
 Then, there are constants $h_0>0$ and $C>0$ such that for all quasi-uniform
 $X\subseteq\cM$ with $h_{X,\cM}<h_0$ and all  $f\in H^\sigma(\cM)$ and
 all data $y\in\R^N$, observed at $X$, the
 error between $f$ and the penalized least-squares approximant
 $Q_{X,\lambda}y$ using the restricted 
 kernel $\Psi$ and $\lambda>0$ can be bounded as
  \begin{eqnarray*}
\|f-Q_{X,\lambda} y\|_{W_q^\mu(\cM)} &\le& C
\left(h_{X,\cM}^{\sigma-\mu-m(1/2-1/q)_+} + \sqrt{\lambda}h_{X,\cM}^{m/\gamma-\mu}\right)
\|f\|_{H^\sigma(\cM)}  \\
&&\mbox{} + C
  \left( h_{X,\cM}^{\sigma-\mu-m(1/2-1/q)_+} \frac{1}{\sqrt{\lambda}} +
  h_{X,\cM}^{m/\gamma-\mu}\right) \|f(X)-y\|_2,
  \end{eqnarray*}
for all $0\le \mu\le \ell(\sigma,m,q)$ with $\mu\in\N_0$ if
$q=\infty$.  In particular, if the observed data are given by
$y_j=f(x_j)$, $1\le j\le N$, this simplifies to
\[
\|f-Q_{X,\lambda} f\|_{W_q^\mu(\cM)} \le C
\left(h_{X,\cM}^{\sigma-\mu-m(1/2-1/q)_+} + \sqrt{\lambda} h_{X,\cM}^{m/\gamma-\mu}\right)
\|f\|_{H^\sigma(\cM)}.
\]
\end{theorem}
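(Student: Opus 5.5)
The plan is to apply the manifold sampling inequality of Theorem \ref{thm:samplingManifold} to the error function $u:=f-Q_{X,\lambda}y$. We take $p=2$, so that $\gamma=\max\{2,q\}$. We then bound the two resulting terms, the Sobolev seminorm $|u|_{H^\sigma(\cM)}$ and the discrete norm $\|u\|_{\ell_2(X)}$, via the stability estimates of Lemma \ref{lem:stab}.

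The first step is to check that $u\in H^\sigma(\cM)$. This holds because $Q_{X,\lambda}y\in V_X\subseteq H^\sigma(\cM)$ by Proposition \ref{Native_on_M}, where $H^\sigma(\cM)$ is the native space of $\Psi$ and $f\in H^\sigma(\cM)$. Since the data set is quasi-uniform, $\rho_{X,\cM}\le c_{qu}$, and the factor $\rho_{X,\cM}^{(\mu-m/\gamma)_+}$ is absorbed into the constant. Hence, for $h_{X,\cM}\le h_0$,
\[
\|u\|_{W_q^\mu(\cM)}\le C\left(h_{X,\cM}^{\sigma-\mu-m(1/2-1/q)_+}\|u\|_{H^\sigma(\cM)}+h_{X,\cM}^{m/\gamma-\mu}\|u\|_{\ell_2(X)}\right).
\]
A small point needs care: Theorem \ref{thm:samplingManifold} is stated with seminorms on both sides. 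I will use the full norm version that its proof actually establishes, since the proof sums the local full-norm inequalities, or else add the lower-order terms in the same way. For $\mu=0$ nothing changes.

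The second step bounds the smooth part. By the triangle inequality and the first estimate of Lemma \ref{lem:stab}, with $H=H^\sigma(\cM)$ carrying the norm $\|\cdot\|_{\Psi_\sigma}$,
\[
\|u\|_{H^\sigma(\cM)}\le \|f\|_{H^\sigma(\cM)}+\|Q_{X,\lambda}y\|_{H^\sigma(\cM)}\le 2\|f\|_{H^\sigma(\cM)}+\lambda^{-1/2}\|f(X)-y\|_2 .
\]
Norm equivalence from Proposition \ref{Native_on_M} only changes constants.

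The third step bounds the discrete part. We write $u(X)=(f(X)-y)+(y-Q_{X,\lambda}y(X))$ and use the second estimate of Lemma \ref{lem:stab}:
\[
\|u\|_{\ell_2(X)}\le \|f(X)-y\|_2+\|y-Q_{X,\lambda}y\|_2\le 2\|f(X)-y\|_2+\sqrt{\lambda}\,\|f\|_{H^\sigma(\cM)}.
\]

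Inserting both bounds into the sampling inequality and collecting the coefficients of $\|f\|_{H^\sigma(\cM)}$ and of $\|f(X)-y\|_2$ gives the stated estimate. The special case $y=f(X)$ follows because the noise term then vanishes. The main obstacle is none of the algebra. It is rather making sure the sampling inequality is applicable in the needed form: with norms rather than seminorms, with $h_0$ uniform, and with $\rho_{X,\cM}$ controlled by quasi-uniformity. All remaining steps are direct.
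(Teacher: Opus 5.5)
Your proposal is correct and follows the paper's proof essentially step by step. Like the paper, you apply the manifold sampling inequality (Theorem \ref{thm:samplingManifold}) with $p=2$ to $u=f-Q_{X,\lambda}y$, then bound $\|u\|_{H^\sigma(\cM)}$ and $\|u\|_{\ell_2(X)}$ with the two estimates of Lemma \ref{lem:stab}; your remarks on absorbing $\rho_{X,\cM}^{(\mu-m/\gamma)_+}$ via quasi-uniformity and on using the full-norm form of the sampling inequality (which is what its proof establishes) make explicit two points the paper leaves implicit.
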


\begin{proof}
The sampling inequality from Theorem \ref{thm:samplingManifold} yields
for $u=f-Q_{X,\lambda}y$ the immediate bound
\[
\|f-Q_{X,\lambda} y\|_{W_q^\mu(\cM)} \le C\left(
h_{X,\cM}^{\sigma-\mu-m(1/2-1/q)_+}
  \|f-Q_{X,\lambda}y\|_{H^\sigma(\cM)} + h_{X,\cM}^{m/\gamma-\mu}
  \|f-Q_{X,\lambda} y\|_{\ell_2(X)}\right).
  \]
  Next, using Lemma \ref{lem:stab}, we find
  \[
  \|f- Q_{X,\lambda} y\|_{H^\sigma(\cM)} \le \|f\|_{H^\sigma(\cM)} +
  \|Q_{X,\lambda} y\|_{H^\sigma(\cM)} \le 2 \|f\|_{H^\sigma(\cM)} +
  \frac{1}{\sqrt{\lambda}} \|f(X)-y\|_2
  \]
  and, similarly, 
  \[
  \|f-Q_{X,\lambda}y\|_{\ell_2(X)} \le 2 \|f(X)-y\|_2 +
  \sqrt{\lambda}\|f\|_{H^\sigma(\cM)}.
  \]
Plugging both of these estimates into the bound above yields the
desired error estimate.  
\end{proof}
 
%%%%%%%%%%%%%%%%%%%%%%%%%%%%%%%%%%%%%%%%%%%%%%%%%
%\subsection{Vector-valued interpolation on manifolds}\label{Vector_valued_M}
Until now we have discussed the approximation of functions $f:\cM\to
\R$. To reuse this theory for approximating functions $f:\cM\to \cN$,
we briefly need to discuss the approximation of vector-valued functions
$f:\cM\to \R^{\ell}$.  Extending kernel-based methods to
non-scalar codomains has already thoroughly been investigated for many
years, see for example \cite{ VVK1, VVK2, VVK3, GieslWendland}.
In the most general setting, one considers functions $f:\Omega\to W$, where $W$ is some
Hilbert space. However, in our situation, where $W=\R^{\ell}$, we can
also simply use component-wise approximation, which corresponds to
employing a diagonal kernel in the general theory. Hence, we define
the vector-valued interpolation and approximation operators as
follows. For $f=(f_1,\ldots,f_{\ell}):\cM\to\R^{\ell}$ we define
$I_{X,\R^{\ell}}f:\cM\to\R^{\ell}$ by $(I_{X,\R^{\ell}}f)_j = I_Xf_j$  and
$Q_{X,\lambda,\R^{\ell}}f:\cM\to\R^{\ell}$ by $(Q_{X,\lambda,\R^{\ell}}f)_j=Q_{X,\lambda}f_j$ for
$1\le j\le \ell$, using the same restricted kernel
$\Psi:\cM\times\cM\to\R$ for each component. In both cases, these operators do not actually
require the data to be generated by a function $f:\cM\to\R^{\ell}$, it
suffices to have observations $Y=(y_1,\ldots,y_{\ell})$ with
$y_j\in\R^N$ to make them well-defined. As before, we will only use
this in the case of the penalized least-squares operator.

To carry our estimates over to vector-valued Sobolev spaces, we
formally define these as follows.

\begin{definition}\label{Vector_valued_Sobolev}
    Let $\cM\subseteq\R^{d}$ be an embedded submanifold. For $1\leq q\leq\infty$ 
and $\mu\geq 0$, where $\mu\in\N_0$ if $q=\infty$, we define the
    $\R^{\ell}$-valued Sobolev space  
$W_q^{\mu}(\cM,\R^{\ell}):=\{f=(f_1,\dots,f_{\ell}): f_i\in W_q^{\mu}(\cM), 1\leq i \leq \ell\}$
with norm
\begin{equation}
  \|f\|_{W_q^{\mu}(\cM,\R^{\ell})}:=\left(\sum_{i=1}^{\ell}
  \|f_i\|_{W_q^{\mu}(\cM)}^2\right)^{\frac{1}{2}}.\label{Vector_Norm} 
\end{equation}
\end{definition}
 Obviously, we could equivalently use any other norm on $\R^{\ell}$ for
 combining the component-wise contributions in
 \eqref{Vector_Norm}. For later, we emphasize the inequality  
\begin{equation}
    \esssup_{x\in \cM} \|f(x)\|_2\leq \|f\|_{L_{\infty}(\cM,\R^{\ell})},\quad f\in L_{\infty}(\cM,\R^{\ell}),\label{unendlich_Ungleichung}
\end{equation}
which is straightforward to verify from the definitions.

For three vectors $a,b,c\in\R^{\ell}$ and two
scalars $\alpha,\beta>0$, which satisfy the inequalities $|a_i|\le \alpha
|b_i| + \beta |c_i|$ for $1\le i\le \ell$, we have the bound
$
  \|a\|_2 \le \alpha\|b\|_2 + \beta\|c\|_2.
  $
Moreover, the norm-minimal property of the interpolant and the
minimizing property of the penalized least-squares approximant show,
that these properties remain valid in the vector-valued setting,
i.e. we have on the one hand
\begin{equation}\label{stabvectorinterpolant} 
\|I_{X,\R^{\ell}}f\|_{H^{\sigma}(\cM,\R^{\ell})} \le
\|f\|_{H^\sigma(\cM,\R^{\ell})}
\end{equation}
and, using also $\sum_{i=1}^\ell
\|y_i-f_i(X)\|_2^2 = \|Y-f(X)\|_2^2$ for $Y=(y_1,\ldots,
y_{\ell})$ with $y_i\in\R^N$, on the other hand
\begin{eqnarray}
\|Q_{X,\lambda,\R^{\ell}}Y\|_{H^\sigma(\cM,\R^{\ell})} & \le & \frac{1}{\sqrt{\lambda}}\|f(X)-Y\|_2 +
  \|f\|_{H^\sigma(\cM,\R^{\ell})},\label{qxl1} \\
  \|Y-Q_{X,\lambda} Y\|_2\ &\le& \|f(X)-Y\|_2 + \sqrt{\lambda} \|f\|_{H^\sigma(\cM,\R^{\ell})}.\nonumber
\end{eqnarray}

This particularly shows that both Theorem \ref{Fuselier-Wright-Final} for the
interpolation operator, as well as Theorem
\ref{thm:quasi-manifold-error} for the penalized least-squares
operator carry over to the vector-valued
setting.

\begin{corollary}\label{Error_M_Vektorwertig}
 Let $\Phi\in C(\R^{d})\cap L_1(\R^{d})$, satisfying  \eqref{ftdecay}
 with $\tau>d/2$. Let $\sigma=\tau-(d-m)/2$ and $1\leq q\leq \infty$.
 Then, there are constants $h_0>0$ and $C>0$ such that for all
 $X\subseteq\cM$ with $h_{X,\cM}<h_0$ and all  $f\in H^\beta(\cM,\R^{\ell})$ with
 $\beta>m/2$ the
 error between $f$ and its interpolant $I_{X,\R^{\ell}}f$ using the restricted
 kernel $\Psi$ can be bounded as follows.
\begin{enumerate}
\item   If $\beta\ge \sigma$ then
\[
\|f-I_{X,\R^{\ell}}f\|_{W_q^{\mu}(\cM,\R^{\ell})}\leq
Ch_{X,\cM}^{\sigma-\mu-m(1/2-1/q)_+}\|f\|_{H^\sigma(\cM,\R^{\ell})}, 
\]
for all $0\le \mu\le \ell(\sigma,m,q)$ with $\mu\in\N_0$ if $q=\infty$.
\item If $\beta<\sigma$ then
\[
\|f-I_{X,\R^{\ell}}f\|_{W_q^{\mu}(\cM,\R^{\ell})}\leq
Ch_{X,\cM}^{\beta-\mu-m(1/2-1/q)_+}\rho_{X,\cM}^{\sigma-\beta}\|f\|_{H^{\beta}(\cM,\R^{\ell})}
\]
for all $0\le \mu\le \ell(\beta,m,q)$ with $\mu\in\N_0$ if $q=\infty$.
\end{enumerate}
\end{corollary}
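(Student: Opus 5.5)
The plan is to reduce Corollary \ref{Error_M_Vektorwertig} to the scalar Theorem \ref{Fuselier-Wright-Final} applied componentwise. The vector-valued interpolant is defined by $(I_{X,\R^{\ell}}f)_i = I_X f_i$, and it uses the same restricted kernel $\Psi$ and the same data set $X$ for every component. So the constants $h_0$ and $C$ supplied by Theorem \ref{Fuselier-Wright-Final} depend only on $\Phi$, $\cM$, $q$, $\mu$ and $\beta$, not on the component index. I will therefore fix $h_0$ and $C$ from the scalar theorem and take $X$ with $h_{X,\cM}<h_0$. By Definition \ref{Vector_valued_Sobolev}, $f\in H^\beta(\cM,\R^{\ell})$ means exactly that each $f_i\in H^\beta(\cM)$, so the scalar theorem applies to every component.

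For case 1 ($\beta\ge\sigma$), I first note that $f_i\in H^\beta(\cM)\subseteq H^\sigma(\cM)$, so $\|f_i\|_{H^\sigma(\cM)}$ is finite. The scalar theorem then gives, for each $1\le i\le \ell$,
\[
\|f_i-I_Xf_i\|_{W_q^\mu(\cM)}\le C h_{X,\cM}^{\sigma-\mu-m(1/2-1/q)_+}\|f_i\|_{H^\sigma(\cM)}.
\]
Next I apply the elementary inequality stated just before the corollary. I take $a_i=\|f_i-I_Xf_i\|_{W_q^\mu(\cM)}$, $b_i=\|f_i\|_{H^\sigma(\cM)}$, $c=0$ and $\alpha = C h_{X,\cM}^{\sigma-\mu-m(1/2-1/q)_+}$. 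The inequality gives $\|a\|_2\le\alpha\|b\|_2$, where $\|a\|_2$ is the vector-valued $W_q^\mu$ norm \eqref{Vector_Norm} of the error and $\|b\|_2=\|f\|_{H^\sigma(\cM,\R^{\ell})}$. This is exactly the claimed bound.

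Case 2 ($\beta<\sigma$) works the same way. The componentwise bound now carries the extra factor $\rho_{X,\cM}^{\sigma-\beta}$, which is the same for all $i$ and can be absorbed into $\alpha$. Taking $b_i=\|f_i\|_{H^\beta(\cM)}$ then yields the second estimate.

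I expect no real obstacle here. The only point that needs care is checking that the constants are uniform in $i$, which holds because the operator, the kernel and the data sites are the same for every component. It is also worth recording that the hypothesis $\beta>m/2$ makes each $f_i$ continuous, so $I_Xf_i$ is well defined.
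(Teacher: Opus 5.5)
Your proposal is correct and follows the paper's own argument: the paper also obtains the corollary by applying Theorem \ref{Fuselier-Wright-Final} to each component $f_i$, with the same kernel and data set, and then combining the componentwise bounds through the elementary inequality $|a_i|\le \alpha|b_i|+\beta|c_i| \Rightarrow \|a\|_2\le\alpha\|b\|_2+\beta\|c\|_2$ stated just before the corollary. Your remarks on the uniformity of the constants in $i$ and on $H^\beta(\cM)\subseteq H^\sigma(\cM)$ in case 1 just make explicit what the paper leaves implicit.
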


\begin{corollary}\label{Error-quasi-M-vector}
Let $\Phi\in C(\R^{d})\cap L_1(\R^{d})$, satisfying  \eqref{ftdecay}
 with $\tau>d/2$. Let $\sigma=\tau-(d-m)/2$ and $1\leq q\leq \infty$.
 Then, there are constants $h_0>0$ and $C>0$ such that for all
 $X\subseteq\cM$ with $h_{X,\cM}<h_0$ and all  $f\in H^\sigma(\cM,\R^{\ell})$ and
 all at $X$ observed data $Y=(y_1,\ldots,y_{\ell})$ with
 $y_j\in\R^N$, $1\le j\le \ell$, the 
 error between $f$ and the penalized least-squares approximant
 $Q_{X,\lambda,\R^{\ell}}Y$ using the restricted 
 kernel $\Psi$ and $\lambda>0$ can be bounded as
  \begin{eqnarray*}
\|f-Q_{X,\lambda,\R^{\ell}} Y\|_{W_q^\mu(\cM,\R^{\ell})} &\le& C
\left(h_{X,\cM}^{\sigma-\mu-m(1/2-1/q)_+} + \sqrt{\lambda}h_{X,\cM}^{m/\gamma-\mu}\right)
\|f\|_{H^\sigma(\cM,\R^{\ell})}  \\
&&\mbox{} + C
  \left( h_{X,\cM}^{\sigma-\mu-m(1/2-1/q)_+} \frac{1}{\sqrt{\lambda}} +
  h_{X,\cM}^{m/\gamma-\mu}\right) \|f(X)-Y\|_2,
  \end{eqnarray*}
for all $0\le \mu\le \ell(\sigma,m,q)$ with $\mu\in\N_0$ if
$q=\infty$.  In particular, if the observed data are given by
$y_j=f(x_j)$, $1\le j\le N$, this simplifies to
\[
\|f-Q_{X,\lambda,\R^{\ell}} f\|_{W_q^\mu(\cM,\R^{\ell})} \le C
\left(h_{X,\cM}^{\sigma-\mu-m(1/2-1/q)_+} + \sqrt{\lambda}h_{X,\cM}^{m/\gamma-\mu}\right)
\|f\|_{H^\sigma(\cM,\R^{\ell})}.
\]
\end{corollary}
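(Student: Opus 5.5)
We reduce the vector-valued statement to the scalar Theorem \ref{thm:quasi-manifold-error}, applied to each component, and then recombine the components with the elementary inequality for vectors $a,b,c\in\R^{\ell}$ recorded before the corollary.

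\emph{Scalar estimates.} We work, as in Theorem \ref{thm:quasi-manifold-error}, with quasi-uniform $X$, so that the factor $\rho_{X,\cM}^{(\mu-m/\gamma)_+}$ from Theorem \ref{thm:samplingManifold} is bounded by a constant. Fix a component $1\le i\le \ell$ and write $s_i:=(Q_{X,\lambda,\R^{\ell}}Y)_i=Q_{X,\lambda}y_i$. Applying Theorem \ref{thm:quasi-manifold-error} to $f_i\in H^\sigma(\cM)$ and the data $y_i\in\R^N$ gives
\[
\|f_i-s_i\|_{W_q^\mu(\cM)}\le C\bigl(A+\sqrt{\lambda}B\bigr)\|f_i\|_{H^\sigma(\cM)}+C\Bigl(\tfrac{A}{\sqrt{\lambda}}+B\Bigr)\|f_i(X)-y_i\|_2 .
\]
Here $A:=h_{X,\cM}^{\sigma-\mu-m(1/2-1/q)_+}$ and $B:=h_{X,\cM}^{m/\gamma-\mu}$. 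The constants $C$ and $h_0$ do not depend on $i$, since the same kernel $\Psi$ and the same point set $X$ are used for every component.

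\emph{Recombination.} Set $a_i:=\|f_i-s_i\|_{W_q^\mu(\cM)}$, $b_i:=\|f_i\|_{H^\sigma(\cM)}$ and $c_i:=\|f_i(X)-y_i\|_2$, with $\alpha:=C(A+\sqrt{\lambda}B)$ and $\beta:=C(A/\sqrt{\lambda}+B)$. The scalar estimates read $|a_i|\le \alpha|b_i|+\beta|c_i|$ for each $i$. The elementary inequality then gives $\|a\|_2\le\alpha\|b\|_2+\beta\|c\|_2$. By Definition \ref{Vector_valued_Sobolev}, $\|a\|_2=\|f-Q_{X,\lambda,\R^{\ell}}Y\|_{W_q^\mu(\cM,\R^{\ell})}$ and $\|b\|_2=\|f\|_{H^\sigma(\cM,\R^{\ell})}$. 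Moreover $\|c\|_2^2=\sum_i\|f_i(X)-y_i\|_2^2=\|f(X)-Y\|_2^2$, which is the stated error bound.

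\emph{Noise-free case.} If $Y=f(X)$, the second term vanishes. This yields the simplified estimate.

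\emph{Alternative route.} One could instead repeat the proof of Theorem \ref{thm:quasi-manifold-error} directly in the vector-valued setting. This would use the sampling inequality componentwise together with the vector stability bounds \eqref{qxl1}. The componentwise argument above is shorter.

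\emph{Main care point.} Uniformity of the constants across components is immediate. The step that needs attention is the quasi-uniformity assumption, which is needed to absorb the $\rho_{X,\cM}$ factor; it must be assumed exactly as in Theorem \ref{thm:quasi-manifold-error}.
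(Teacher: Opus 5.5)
Your proposal is correct and follows the paper's route. The paper records exactly the elementary inequality $|a_i|\le\alpha|b_i|+\beta|c_i|\Rightarrow\|a\|_2\le\alpha\|b\|_2+\beta\|c\|_2$, together with the vector stability bounds \eqref{qxl1}, and then simply states that Theorem \ref{thm:quasi-manifold-error} carries over componentwise. That is precisely your argument. You are also right to insist on quasi-uniformity of $X$. The printed statement omits it, but the scalar theorem needs it to absorb the factor $\rho_{X,\cM}^{(\mu-m/\gamma)_+}$, which matters only when $\mu>m/\gamma$. The paper itself uses this corollary only for quasi-uniform $X$, in Theorem \ref{Final_estimate_PLS}.
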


%%%%%%%%%%%%%%%%%%%%%%%%%%%%%%%%%%%%%%%%%%%%%%%%
\section{Learning of manifold-to-manifold maps}\label{Together_Section}
Throughout this section, $\cM\subseteq\R^d$ will be a compact, smooth, connected
$m$-dimensional submanifold of $\R^d$ and $\cN\subseteq\R^{\ell}$ will be a compact smooth,
connected $n$-dimensional submanifold of $\R^{\ell}$.

After having established theory for approximating functions $f:\cM\to
\R^{\ell}$ from scattered data, we can now come to our main task, the 
approximation of manifold-valued functions $f:\cM\to \cN$. We
will achieve this by combining the approximation 
processes of the last section with a projection $\cP_\cN:\R^{\ell}\to\cN$. Hence,
in this section, we will study the combined interpolation and approximation operators
\[
    I_{X,\cN}:= \cP_{\cN}\circ I_{X,\R^{\ell}},\qquad
    Q_{X,\lambda,\cN}:=\cP_{\cN}\circ
    Q_{X,\lambda,\R^{\ell}}. 
\]

An obvious choice for $\cP_\cN$ is to assign each vector $u\in
\R^{\ell}$ to its closest point on $\cN$. This however, is usually only
possible if $u$ is already sufficiently close to the manifold $\cN$. 

\subsection{Closest point approximation} \label{Tubular_Neighborhood_Section} 
In this subsection, we will recall and prove relevant material on
the closest point projection $\cP_\cN:\R^{\ell}\to\cN$. In particular,
we are interested in its well-definedness.

Let $u\in \R^{\ell}$. A point  $p^*\in \cN$ is called a closest
    point to $u$ from $\cN$ if $p^*=\argmin_{p\in \cN} \|u-p\|_2.$ 
Since $\cN$ is assumed to be compact, a closest point to $u\in
\R^{\ell}$ always exist. However, it does not have to be
unique.
%Consider for example the sphere $S^{\ell-1}\subseteq\R^{\ell}$,
%where each point $p\in S^{\ell-1}$ is a closest point to the origin
%$0\in \R^{\ell}$.
As in the situation of best approximations from
linear subspaces, closest points are characterized by some type of
orthogonality. 
\begin{lemma}\label{Orthogonality of best approximation}
Let $p\in \cN$ be a closest point to $u\in \R^{\ell}$. Then $u-p\in N_p\cN.$
\end{lemma}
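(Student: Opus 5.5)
The plan is a first-order optimality argument along curves in $\cN$, using the definition of the tangent space from Subsection \ref{Submanifolds}. Since $N_p\cN=(T_p\cN)^\perp$, it suffices to show that $\langle u-p, v\rangle = 0$ for every $v\in T_p\cN$.

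Fix $v\in T_p\cN$. By the definition of the tangent space, there is a curve $\gamma\in C^1((-\epsilon,\epsilon),\R^{\ell})$ with $\gamma((-\epsilon,\epsilon))\subseteq\cN$, $\gamma(0)=p$ and $\dot\gamma(0)=v$. Consider the function
\[
g(t):=\|u-\gamma(t)\|_2^2 = \langle u-\gamma(t),u-\gamma(t)\rangle, \qquad t\in(-\epsilon,\epsilon).
\]
It is $C^1$, since it is the composition of a $C^1$ curve with a smooth function. Because $\gamma(t)\in\cN$ for all $t$ and $p=\gamma(0)$ is a closest point to $u$ from $\cN$, we have $g(t)\ge g(0)$ for all $t$. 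Thus $g$ attains an interior minimum at $t=0$, and hence $g'(0)=0$.

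By the product rule, $g'(t)=-2\langle u-\gamma(t),\dot\gamma(t)\rangle$, so $g'(0)=-2\langle u-p,v\rangle$. Therefore $\langle u-p,v\rangle=0$. As $v\in T_p\cN$ was arbitrary, it follows that $u-p\in(T_p\cN)^\perp=N_p\cN$.

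There is no real obstacle. The only point needing care is that every tangent vector is realized by some curve, which is exactly how $T_p\cN$ was defined in the paper. Minimality is used only along $\cN$, and no uniqueness of the closest point is required.
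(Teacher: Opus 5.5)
Your proof is correct and is essentially the paper's own argument: both differentiate $t\mapsto\|u-\gamma(t)\|_2^2$ along a curve in $\cN$ with $\gamma(0)=p$, $\dot\gamma(0)=v$, and use that the derivative vanishes at the minimum $t=0$. Your value $g'(0)=-2\langle u-p,v\rangle$ has the correct sign. The paper's displayed computation ends in $2\langle u-p,v\rangle$, a harmless sign slip, since the expression is set to zero either way.
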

\begin{proof}
    Let $v\in T_p\cN$. Consider an arbitrary curve
    $c:(-\epsilon,\epsilon)\to \cN$ with $c(0)=p$ and
    $\dot{c}(0)=v.$ As $p$ is a closest point to $u$, the function 
    $g:(-\epsilon,\epsilon)\to \R, t\mapsto\|u-c(t)\|_2^2$
    has a local minimum at $t=0$. Hence,
$
0 = \dot{g}(0)=\frac{d}{dt}|_{t=0}\langle u-c(t),u-c(t)\rangle_2
 = 2\langle c(0),\dot{c}(0)\rangle_2- 2\langle u,\dot{c}(0)\rangle_2=
 2\langle u-p,v\rangle_2
 $ 
     and therefore $u-p\perp v$.
\end{proof}
Taking a closer look at the unit sphere
$S^{\ell-1}\subseteq\R^{\ell}$, we note that for each $u\in
\R^{\ell}\setminus{\{0\}}$ there exists a unique closest point
from $S^{\ell-1},$ which is given by $u\mapsto u/\|u\|_2$. Hence, in
this case there exists an open neighborhood $U$ of $S^{\ell-1}$, such
that each $u\in U$ has a unique closest point from
$S^{\ell-1}$. This is no particularity of the
sphere, but holds in general. Such a $U$ is called  a
tubular neighborhood, see  \cite[Chapter 6]{LeeSmooth}. 
\begin{definition}
    A tubular neighborhood of an embedded submanifold
    $\cN\subseteq\R^{\ell}$ is a neighborhood $U\subseteq\R^{\ell}$ of
    $\cN$ which is the diffeomorphic image under the addition map $E:
    N\cN\to \R^{\ell}, (p,n)\mapsto p+n$, of an open subset $V\subseteq
    N\cN$ of the form
$V=\{(p,n)\in N\cN: \|n\|_2<\delta(p)\}$
for some positive continuous function $\delta:\cN\to \R.$ If the
function $\delta$ can be chosen as constant, $\delta\equiv
\hat{\delta}>0$, we say that $U=E(V)$ is a uniform tubular
neighborhood of $\cN$. 
\end{definition}

The tubular neighborhood theorem,  \cite[Theorem 6.24]{LeeSmooth}),
guarantees that every embedded submanifold of $\R^{\ell}$ has a tubular neighborhood.

\begin{theorem}[Tubular Neighborhood Theorem]\label{TubularNeighborhoodTheorem}
Every embedded submanifold $\cN$ of $\R^{\ell}$ has a tubular
neighborhood. If $\cN$ is compact, there also exists a uniform tubular
neighborhood of $\cN$. 
\end{theorem}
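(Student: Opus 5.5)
The first claim is the classical tubular neighborhood theorem, and the paper cites it from \cite[Theorem 6.24]{LeeSmooth}. I will therefore concentrate on the compact case. The plan is to take the tubular neighborhood $U=E(V)$ with $V=\{(p,n)\in N\cN:\|n\|_2<\delta(p)\}$ for a positive continuous $\delta$, and extract a uniform radius from it. Since $\cN$ is compact and $\delta$ is continuous and positive, $\hat\delta:=\min_{p\in\cN}\delta(p)>0$ exists. Define $\hat V:=\{(p,n)\in N\cN:\|n\|_2<\hat\delta\}\subseteq V$.

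Next I verify that $\hat V$ gives a uniform tubular neighborhood. Because $\|\cdot\|_2$ is continuous on $N\cN\subseteq\R^\ell\times\R^\ell$, the set $\hat V$ is open in $N\cN$. Since $E|_V:V\to U$ is a diffeomorphism onto the open set $U\subseteq\R^\ell$, its restriction to the open subset $\hat V$ is a diffeomorphism onto $E(\hat V)$. Moreover, $E(\hat V)$ is open in $U$ and hence open in $\R^\ell$. Finally, $\hat V$ contains the zero section $\{(p,0):p\in\cN\}$, so $E(\hat V)\supseteq\cN$ is a neighborhood of $\cN$. This is exactly the definition of a uniform tubular neighborhood with constant $\hat\delta$.

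If one wants to prove the existence of some tubular neighborhood rather than cite it, the step I expect to be the main obstacle is injectivity of $E$ on a neighborhood of the zero section. The differential of $E$ at $(p,0)$ is the map $T_p\cN\oplus N_p\cN\to\R^\ell$, $(v,w)\mapsto v+w$, which is an isomorphism. The inverse function theorem then gives local diffeomorphisms near each $(p,0)$. For compact $\cN$, injectivity on a uniform band follows by contradiction. Suppose there are $(p_k,n_k)\ne(q_k,m_k)$ with $\|n_k\|_2,\|m_k\|_2<1/k$ and $p_k+n_k=q_k+m_k$. By compactness, pass to subsequences with $p_k\to p$ and $q_k\to q$; then $p=q$. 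This contradicts local injectivity near $(p,0)$. This argument directly yields a constant $\hat\delta$ in the compact case, and it simultaneously gives that $E$ is a local diffeomorphism on the whole band. Hence $E$ is a diffeomorphism onto its open image.
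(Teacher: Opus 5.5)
Your proposal is correct and follows the paper's route: cite \cite[Theorem 6.24]{LeeSmooth} for existence, then in the compact case take $\hat\delta$ as the minimum of the positive continuous function $\delta$ over $\cN$ (the paper uses $\tfrac12\delta$, which is inessential) and restrict $E$ to the band $\{(p,n)\in N\cN:\|n\|_2<\hat\delta\}$. Your extra direct compactness argument for existence is a valid optional addition, but it is not needed.
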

\begin{proof}
The first statement is the above mentioned \cite[Theorem 6.24]{LeeSmooth}),
The second statement follows easily: If $\cN$ is compact, the positive and
continuous function $\cN \ni p\mapsto \frac{1}{2}\delta(p)\in \R$
attains a minimum on $\cN$, denoted by $\hat{\delta}$. Then, 
$U_{\hat{\delta}}:= E(\{(p,n)\in N\cN: \|n\|_2<\hat{\delta}\})$
is a uniform tubular neighborhood of $\cN$.  
\end{proof}

Uniform tubular neighborhoods and closest points are
closely related.

\begin{theorem}\label{BestApproximationTubular}
Let $U=E(V)$ be a uniform tubular neighborhood of $\cN$ with
 $V=\{(p,n)\in N\cN: \|n\|_2<\hat{\delta}\}$
 for a $\hat{\delta}>0.$ Then, for each $u\in U$ there exists a unique
 closest point from $\cN$. The map  
 $$\cP_{\cN}: U\to \cN, \quad \cP_\cN(u):= \argmin_{p\in \cN} \|u-p\|_2,$$
 which assigns each point $u\in U$ to its closest point on $\cN$, is given by
 $\cP_{\cN}(u)=\pi_{N\cN}\circ E^{-1},$
where
$\pi_{N\cN}:N\cN\to \cN$ denotes the canonical projection from $N\cN$
to $\cN$. The map $\cP_{\cN}$ is smooth and is called closest point
projection. 
\end{theorem}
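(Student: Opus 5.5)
Fix $u\in U$. Since $\cN$ is compact, a closest point $p^*\in\cN$ to $u$ exists, and by Lemma \ref{Orthogonality of best approximation} we have $n^*:=u-p^*\in N_{p^*}\cN$. Hence $(p^*,n^*)\in N\cN$ and $E(p^*,n^*)=u$.

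The first key step is to show $(p^*,n^*)\in V$, i.e.\ $\|n^*\|_2<\hat\delta$. Because $u\in U=E(V)$, there is some $(p,n)\in V$ with $u=p+n$ and $\|n\|_2<\hat\delta$. Minimality of $p^*$ then gives
\[
\|n^*\|_2=\|u-p^*\|_2\le \|u-p\|_2=\|n\|_2<\hat\delta .
\]
So $(p^*,n^*)\in V$. Since $E|_V:V\to U$ is a diffeomorphism, in particular injective, we get $(p^*,n^*)=E|_V^{-1}(u)$.

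Uniqueness follows at once. Any other closest point $\tilde p$ yields, by the same argument, $(\tilde p,u-\tilde p)\in V$ with $E(\tilde p,u-\tilde p)=u$. Injectivity of $E|_V$ forces $\tilde p=p^*$. This argument also gives the formula $\cP_\cN(u)=p^*=\pi_{N\cN}(E|_V^{-1}(u))$, i.e.\ $\cP_\cN=\pi_{N\cN}\circ E^{-1}$ with $E^{-1}$ meaning the inverse of $E|_V$.

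Smoothness comes from composition. $E|_V^{-1}:U\to V$ is smooth, being the inverse of a diffeomorphism. $\pi_{N\cN}:N\cN\to\cN$ is smooth, as recalled in Subsection \ref{Submanifolds}, and its restriction to the open subset $V$ is still smooth. Hence $\cP_\cN$ is smooth.

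The only step needing care is the first one: a closest point might a priori correspond to a different preimage of $E$ lying outside $V$. The uniform radius $\hat\delta$ together with the distance comparison above rules this out. This is exactly where uniformity, as opposed to a variable $\delta(p)$, is used.
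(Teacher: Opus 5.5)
Your proof is correct and follows essentially the same route as the paper. Existence comes from compactness, and Lemma \ref{Orthogonality of best approximation} places $(p^*,u-p^*)$ in $N\cN$. The comparison $\|u-p^*\|_2\le\|n\|_2<\hat\delta$ puts it in $V$, injectivity of $E|_V$ then gives both uniqueness and the formula $\cP_\cN=\pi_{N\cN}\circ (E|_V)^{-1}$, and smoothness follows by composition. Your remark that the constant radius $\hat\delta$ is exactly what makes the comparison step work is also accurate.
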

\begin{proof}
Let $u\in U$. As already mentioned, compactness of $\cN$ guarantees
the existence of a closest point to $u$ on $\cN$.
As $u\in U$, it can be uniquely written as $u=E((p,n))$ with $p\in
\cN$, $n\in T_p\cN$ and $\|n\|_2<\hat{\delta}$. Assume a point $p_2\in
\cN$ would also be a closest point to $u$. Then, we must have
$\|u-p_2\|_2\leq \|n\|_2<\hat{\delta}$. On the other hand, $u-p_2\in
N_{p_2}\cN$ according to Lemma \ref{Orthogonality of best
  approximation}. Hence, $(p_2,u-p_2)\in V$ with
$u=E((p_2,u-p_2))$. The uniqueness of such a decomposition inside the
tubular neighborhood $U$ gives $p=p_2$. We immediately conclude that
the closest point to $u\in U$ is given by  
$\cP_{\cN}(u)=\pi_{N\cN}\circ E^{-1}.$
As $E$ is a diffeomorphism and $\pi_{N\cN}$ is smooth, the closest
point projection $\cP_{\cN}:U\subseteq\R^{\ell}\to\cN$ is smooth.  
\end{proof}

We end this subsection by looking at the smoothness of the closest
point projection, when applied to a function $f\in
H^\sigma(\cM,\R^{\ell})$, i.e. we are interested in the smoothness of
$\cP_{\cN}\circ f:\cM\to\cN$. To this end, we need the following
result on the composition of functions.

\begin{lemma}\label{Nemitskij_result}
    Let $\Omega\subseteq\R^d$ be a bounded Lipschitz domain and
    $\sigma>\max\{1,d/2\}$. %d/2 für Sobolev Embedding, \geq 1 für
                       %Runst/Sickel-Resultat; Compact support will
                       %man dann hier wegen unten "Shift-Argument"
                       %unten auch nicht voraussetzen, ist aber auch
                       %gar keine Voraussetzung. 
Let $g\in C^{\infty}(\R^{\ell})$ with $g(0)=0$. 
Then, there is a constant $c>0$ such that 
\[
\|g\circ f\|_{H^\sigma(\Omega)}\leq c\left( \max_{1\leq j\leq
  \ell}\|f_j\|_{H^\sigma(\Omega)}\right)\left( 1+\max_{1\leq j\leq
  \ell}\|f_j\|^{\lfloor \sigma\rfloor}_{H^\sigma(\Omega)}\right)
\]
for all $f=(f_1,\ldots,f_\ell):\Omega\to\R^{\ell}$ with $f_j\in H^\sigma(\Omega)$ for $1\le
j\le \ell$.
\end{lemma}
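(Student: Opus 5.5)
The plan is to reduce to the whole space, prove the estimate for integer order by the chain rule and Gagliardo--Nirenberg, and then add the fractional seminorm. One caveat up front: for a completely arbitrary $g\in C^\infty(\R^{\ell})$ the constant cannot be independent of $f$. For example, $g(t)=e^{t_1}-1$ applied to a large constant $f$ gives growth $e^{\|f\|}$, which is not polynomial. So I will read the hypothesis as in the composition theorems of Runst--Sickel: $c$ depends on $g$ through $\sup_{\R^\ell}|D^\beta g|$ for $|\beta|\le\lfloor\sigma\rfloor+1$, and these suprema are finite. This is the case needed later, where $g$ is the projection $\cP_\cN$ shifted by a base point and multiplied by a cutoff that equals one on a neighbourhood of $\cN$; such a $g$ is smooth with bounded derivatives.

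\emph{Step 1: reduction to $\R^d$.} Since $\Omega$ is a bounded Lipschitz domain, Stein's extension operator $\mathcal{E}$ is bounded from $H^\sigma(\Omega)$ to $H^\sigma(\R^d)$. Put $F_j=\mathcal{E}f_j$. Then $g\circ F$ extends $g\circ f$, so $\|g\circ f\|_{H^\sigma(\Omega)}\le\|g\circ F\|_{H^\sigma(\R^d)}$ and $\|F_j\|_{H^\sigma(\R^d)}\le C\|f_j\|_{H^\sigma(\Omega)}$. Because $\sigma>d/2$, the Sobolev embedding gives $\|F\|_{L_\infty}\le C\max_j\|f_j\|_{H^\sigma(\Omega)}=:CA$. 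The $L_2$-part is immediate: $g(0)=0$ gives $|g(F)|\le\sup|\nabla g|\,\|F\|_2$ pointwise, hence $\|g(F)\|_{L_2}\le cA$.

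\emph{Step 2: integer derivatives.} Let $0<|\alpha|=k\le\lfloor\sigma\rfloor$. By Fa\`a di Bruno, $D^\alpha(g\circ F)$ is a finite sum of terms
\[
(D^\beta g)(F)\prod_{i=1}^r D^{\gamma_i}F_{j_i},\qquad |\beta|=r\le k,\ \sum_i|\gamma_i|=k,\ |\gamma_i|\ge1 .
\]
The first factor is bounded by $\sup|D^\beta g|$. For the product I use H\"older with exponents $p_i=2k/|\gamma_i|$ together with the Gagliardo--Nirenberg inequality
\[
\|D^{\gamma}F_j\|_{L_{2k/|\gamma|}}\le C\|F_j\|_{L_\infty}^{1-|\gamma|/k}\,\|F_j\|_{H^k}^{|\gamma|/k}.
\]
This bounds each term by $C\|F\|_\infty^{r-1}\|F\|_{H^k}\le C A^{r}$. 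Since $r\le\lfloor\sigma\rfloor$, every term is at most $CA(1+A^{\lfloor\sigma\rfloor})$, and so $\|g\circ F\|_{H^{\lfloor\sigma\rfloor}}\le cA(1+A^{\lfloor\sigma\rfloor})$. In particular this settles the case $\sigma\in\N$.

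\emph{Step 3: fractional part, the main obstacle.} Write $\sigma=k+t$ with $0<t<1$. I must bound the Slobodeckij seminorm of each term from Step 2 with $|\alpha|=k$. The difference of a product splits, by a telescoping argument, into two kinds of pieces.

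The first kind is a difference of one factor $D^{\gamma_i}F$, with all other factors and $(D^\beta g)(F)$ left intact. For these I use H\"older again together with a fractional Gagliardo--Nirenberg inequality: $D^{\gamma_i}F$ lies in $W^{t}_{p}$ for the matching $p$, with norm bounded by $\|F\|_\infty^{1-\theta}\|F\|_{H^\sigma}^{\theta}$.

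The second kind is the difference $(D^\beta g)(F(x))-(D^\beta g)(F(y))$. It is bounded by $\sup|D^{\beta}\nabla g|\,|F(x)-F(y)|$. When $k=0$ this reproduces the $W^t_2$ seminorm of $F$. When $k\ge 1$ it is paired with the remaining product and handled with the embedding $F\in W^{t}_{p}$ for suitable $p$.

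The exponent bookkeeping is the delicate point. Each piece must again be of the form $\|F\|_\infty^{r-1}\|F\|_{H^\sigma}$ or $\|F\|_\infty^{r}\|F\|_{H^\sigma}$, with total power at most $\lfloor\sigma\rfloor+1$ in $A$. If this direct route becomes too messy, I would instead cite the Runst--Sickel composition theorem for $H^\sigma(\R^d)\cap L_\infty$ (valid for $\sigma>1$), namely
\[
\|g(F)\|_{H^\sigma}\le c\,\|F\|_{H^\sigma}\bigl(1+\|F\|_\infty^{\lfloor\sigma\rfloor}\bigr),
\]
and finish with $\|F\|_\infty\le CA$ and Step 1. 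This citation is also where the hypothesis $\sigma>1$ is used.
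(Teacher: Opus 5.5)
Once you invoke the Runst--Sickel fallback, your argument is the paper's proof. The paper likewise extends from $\Omega$ to $\R^d$, identifies $H^\sigma(\R^d)$ with $F^\sigma_{2,2}$, and applies Theorem~2 of Section~5.5.1 of Runst--Sickel with $\mu=\lfloor\sigma\rfloor+1$; this is where $\sigma\ge 1$ is needed. It uses $\sigma>d/2$ both to have $f\in L_\infty$ and to trade $\|f\|_{L_\infty}$ for the $H^\sigma$ norm. Your caveat about $g$ is right, and it corrects the statement rather than adding a convenient assumption. Take $g(t)=e^{t_1}-1$ and the constant map $f=(M,0,\ldots,0)$ on the bounded domain $\Omega$: the left-hand side grows like $e^M$, while the right-hand side is polynomial in $M$. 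So $c$ must depend on $\sup_{\R^\ell}|D^\beta g|$ for $1\le|\beta|\le\lfloor\sigma\rfloor+1$. This is harmless for the paper, since in Theorem~\ref{thm:ProjectionNormEstimate} the lemma is only applied to $g_j-g_j(0)$ with $g_j\in C_c^\infty(\R^\ell)$.

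Your Step~2 is a genuine addition. It is an elementary, self-contained proof for integer $\sigma$ (Fa\`a di Bruno, H\"older with $p_i=2k/|\gamma_i|$, classical Gagliardo--Nirenberg). It is correct and avoids Triebel--Lizorkin theory in that case.

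The direct fractional argument of Step~3, however, would not close as written, so there the citation is not optional. After pulling out the bounded factor $(D^\beta g)(F)$, a typical piece is $\int\int|u(x)|^2|w(x)-w(y)|^2|x-y|^{-d-2t}\,dx\,dy=\int|u|^2(S_tw)^2$, where $S_tw(x)=\bigl(\int|w(x)-w(y)|^2|x-y|^{-d-2t}\,dy\bigr)^{1/2}$. H\"older then gives $\|u\|_{L_q}^2\|S_tw\|_{L_p}^2$ with $2/p+2/q=1$. By Stein's characterization, $\|S_tw\|_{L_p}$ is the $H^t_p=F^t_{p,2}$ seminorm. For $p>2$ this is strictly stronger than the Slobodeckij seminorm of $W^t_p=B^t_{p,p}$, which is what you propose to use. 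Taking the supremum over $\|u\|_{L_q}\le 1$ shows that a bound in terms of $|w|_{W^t_p}$ is false in general.

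To complete the direct route you would need two further ingredients. First, a Gagliardo--Nirenberg inequality in the $F^t_{p,2}$ scale. Second, H\"older with respect to the measure $|w(x)-w(y)|^2|x-y|^{-d-2t}\,dx\,dy$, to treat products whose factors are evaluated partly at $x$ and partly at $y$. This is precisely the machinery the Runst--Sickel theorem packages.
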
 
\begin{proof}
This lemma is a direct application of Theorem 2 from \cite[Section
  5.5.1]{Nemytskij}. To see this, first note that by \cite[Section
  2.1.2]{Nemytskij}, the Sobolev space $H^\sigma(\R^d)$ coincides with the
Triebel-Lizorkin space $F_{2,2}^\sigma=\F_{2,2}^\sigma$ for any $\sigma>0$ with
equivalent norms. Next, besides the above assumptions on $g$, Theorem 2 in
\cite{Nemytskij} also requires $f\in H^\sigma(\R^d)\cap L_\infty(\R^d)$,
which is satisfied because of $\sigma>d/2$ and the Sobolev embedding
theorem. Finally, in our situation the requirements on $\sigma$ are
$1\le \sigma$ and $0<\sigma<\mu$, which we satisfy with
$\mu=\lfloor\sigma\rfloor+1$. Thus, the  mentioned theorem
guarantees the existence of a constant $c>0$ such that
\[
\|g\circ
f\|_{H^\sigma(\R^d)}\leq c\left( \max_{1\leq j\leq
  \ell}\|f_j\|_{H^\sigma(\R^d)}\right)\left( 1+\max_{1\leq j\leq
  \ell}\|f_j\|^{\lfloor \sigma \rfloor}_{H^\sigma(\R^d)}\right)
\]
for all $f:\R^d\to \R^{\ell}$ with $f_j\in H^\sigma(\R^d)$,
$1\le j\le \ell$, where we also have used the Sobolev embedding
theorem, which allowed us to bound the $\|f_j\|_{L_\infty(\R^d)}$ norm
by a constant times $\|f_j\|_{H^\sigma(\R^d)}$.

As $\Omega\subseteq\R^d$ is a Lipschitz domain, the
functions $f_j\in H^\sigma(\Omega)$ can be extended to functions
$\tilde{E}f_j\in H^\sigma(\R^d)$ with $\|\tilde{E}f_j\|_{H^\sigma(\R^d)}\leq
C\|f_j\|_{H^\sigma(\Omega)}$. Noticing that $\|g\circ f\|_{H^\sigma(\Omega)}\leq
\|g\circ \tilde{E}f\|_{H^\sigma(\R^d)}$ finalizes our proof. 
\end{proof}

The application of the above lemma to the coordinate representation of
$\cP_\cN\circ f$ gives the desired result on the smoothness of the
latter function and Sobolev bounds thereof.

\begin{theorem}\label{thm:ProjectionNormEstimate}
Let $U\subseteq\R^{\ell}$ be a uniform tubular neighborhood  of
$\cN$ with associated closest point projection $\cP_{\cN}:U\to
\cN\subseteq\R^{\ell}$. Then, for any $\sigma>\max\{1,m/2\}$ there is
a constant $C>0$ such  that
\[
\|\cP_{\cN}\circ f\|_{H^\sigma(\cM,\R^{\ell})} \le C
\max\left\{1,\|f\|_{H^\sigma(\cM,\R^{\ell})}\left(1+\|f\|_{H^\sigma(\cM,\R^{\ell})}^{\lfloor\sigma\rfloor}\right)\right\}
\]
for all $f\in H^\sigma(\cM,\R^{\ell})$ with $f(\cM)\subseteq U$.
\end{theorem}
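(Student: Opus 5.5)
The plan is to reduce the statement to Lemma \ref{Nemitskij_result}, applied chart by chart. Three points need care: $\cP_\cN$ is only defined on $U$; Lemma \ref{Nemitskij_result} needs $g(0)=0$; and the local representations $f\circ\psi_j^{-1}$ need not be in $H^\sigma(B_j)$, only locally (Lemma \ref{Local_representation}).

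First we fix the smooth extension. Since $U=E(V)$ has constant radius $\hat\delta$, we pass to a slightly smaller uniform tubular neighborhood $U'$ with radius $\hat\delta/2$. This costs nothing, because $f(\cM)\subseteq U$ and $f$ is continuous on the compact set $\cM$ (Sobolev embedding, $\sigma>m/2$). Strictly speaking we need $f(\cM)\subseteq U'$. If this fails, we instead use a cutoff $\eta\in C_c^\infty(U)$ with $\eta\equiv1$ on the compact set $f(\cM)$. That choice depends on $f$, which threatens the uniformity of $C$, so it is cleaner to assume the closure of $U$ still lies inside a larger tubular neighborhood.

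Next we pick $\eta\in C_c^\infty(\R^\ell)$ with $\eta\equiv1$ on $\overline{U}$. We then set $G:=\eta\,\cP_\cN$, extended by zero, so that $G\in C^\infty(\R^\ell,\R^\ell)$ and $G=\cP_\cN$ on $U$. To meet $g(0)=0$, we choose a translation $a\in\R^\ell$ and define $g_i(z):=G_i(z+a)-G_i(a)$. Then $g_i(0)=0$, and $\cP_\cN\circ f=G\circ f$. Writing $G_i\circ f=g_i\circ(f-a)+G_i(a)$, the constant term only contributes a bounded amount to the Sobolev norm. This is where the $\max\{1,\cdot\}$ in the statement comes from.

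The chart reduction works as follows. We need to bound $\|(\chi_j\,G_i\circ f)\circ\psi_j^{-1}\|_{H^\sigma(B_j)}$. Since $\operatorname{supp}\chi_j\subseteq U_j$ is compact, we choose $\xi_j\in C_c^\infty(B_j)$ with $\xi_j\equiv1$ on a neighborhood $V_j$ of $\psi_j(\operatorname{supp}\chi_j)$, and set $w_j:=\xi_j(f\circ\psi_j^{-1})$. Lemma \ref{Local_representation} gives $\|w_j\|_{H^\sigma(B_j)}\le C\|f\|_{H^\sigma(\cM,\R^\ell)}$.

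On $V_j$ we have $G\circ f\circ\psi_j^{-1}=G\circ w_j$, so
\[
(\chi_j\,G_i\circ f)\circ\psi_j^{-1}=(\chi_j\circ\psi_j^{-1})\,\bigl(g_i\circ(w_j-a)+G_i(a)\bigr).
\]
There is a subtlety with the shift: $w_j-a$ is not in $H^\sigma$ when $a\ne0$. We avoid this by choosing $a=0$, so that $g_i=G_i-G_i(0)$ and no shift is needed at all. The constant $G_i(0)$ is then multiplied by the smooth compactly supported function $\chi_j\circ\psi_j^{-1}$, which gives a bounded contribution.

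Multiplication by the smooth cutoff $\chi_j\circ\psi_j^{-1}$ is bounded on $H^\sigma(B_j)$. Lemma \ref{Nemitskij_result} on the ball $B_j$, which is Lipschitz and satisfies $\sigma>\max\{1,m/2\}$, therefore yields
\[
\|g_i\circ w_j\|_{H^\sigma(B_j)}\le c\,\|w_j\|\,(1+\|w_j\|^{\lfloor\sigma\rfloor}).
\]
Here we use that $\max_k\|w_{j,k}\|$ is bounded by the vector norm. Summing over $j\le L$ and $i\le\ell$ gives
\[
\|\cP_\cN\circ f\|_{H^\sigma(\cM,\R^\ell)}\le C\bigl(1+\|f\|(1+\|f\|^{\lfloor\sigma\rfloor})\bigr)\le 2C\max\{1,\|f\|(1+\|f\|^{\lfloor\sigma\rfloor})\}.
\]

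The main obstacle is the cutoff step: making $G$ globally smooth while agreeing with $\cP_\cN$ on all of $U$. This is needed because $f(\cM)$ may approach $\partial U$. I would first try to shrink the tube and argue that the theorem is applied with a fixed $U$ whose closure lies in a larger tubular neighborhood. If that is not allowed, I would accept an $f$-independent $C$ only for $f(\cM)$ in a fixed compact subset of $U$, and note the gap. A secondary technical point is that Lemma \ref{Local_representation} is stated for $1<p<\infty$, which is fine here since $p=2$.
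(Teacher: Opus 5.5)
Your chart-level argument is the paper's proof. It uses a compactly supported smooth $G$ agreeing with $\cP_\cN$ on a tube and the normalization $G_i-G_i(0)$ to meet $g(0)=0$. It then uses a cutoff $\xi_j$ (the paper's $\Gamma$) to turn the merely local $H^\sigma$ function $f\circ\psi_j^{-1}$ into an element of $H^\sigma(B_j)$ controlled by Lemma \ref{Local_representation}. The remaining steps are boundedness of multiplication by $\chi_j\circ\psi_j^{-1}$, Lemma \ref{Nemitskij_result} on each ball, and summation over charts and components. Those steps are all fine.

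The obstacle you isolate at the end is genuine and cannot be argued away. With a constant $C$ uniform over all $f$ with $f(\cM)\subseteq U$, the statement is false. Take $\cM=\cN=S^1\subseteq\R^2$, $\sigma=2$, and $U=\{u\in\R^2:0<\|u\|_2<2\}$, the uniform tube of radius $\hat\delta=1$ with $\cP_\cN(u)=u/\|u\|_2$. Let $f_k(\cos\theta,\sin\theta)=k^{-2}(\cos k\theta,\sin k\theta)$. Then $f_k(\cM)\subseteq U$ and $\|f_k\|_{H^2(\cM,\R^2)}$ is bounded in $k$, so the right-hand side stays bounded. But $\cP_\cN\circ f_k=(\cos k\theta,\sin k\theta)$ has $H^2$-norm of order $k^2$.

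The paper's proof has the same tacit restriction. Its extension $G$ coincides with $\cP_\cN$ only on the shrunken tube $\widetilde U$. So the identity $\cP_\cN\circ f=G\circ f$, on which everything rests, requires $f(\cM)\subseteq\widetilde U$.

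The correct version is therefore your fallback:
\begin{enumerate}
\item Fix a tube $\widetilde U$ (say of radius $\hat\delta/2$) with $\overline{\widetilde U}\subseteq U$.
\item Take $\eta\in C_c^\infty(U)$ with $\eta\equiv1$ on $\overline{\widetilde U}$, and set $G=\eta\,\cP_\cN$ extended by zero. This is exactly the paper's $G$.
\item Obtain the bound with $C$ depending on $\widetilde U$, for all $f$ with $f(\cM)\subseteq\widetilde U$.
\end{enumerate}
This is all Theorems \ref{Final_estimate} and \ref{Final_estimate_PLS} need. There, $h_0$ (and $\lambda_0$) can be shrunk so that the $L_\infty$-error is below $\hat\delta/2$.

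Your first remedy, assuming $\overline U$ lies in a larger tubular neighborhood, is not always available. In the example, $0\in\overline U$, and $0$ has no unique closest point on $S^1$, so radius $1$ is already maximal. Also, as you wrote it, $\eta\in C_c^\infty(\R^\ell)$ is merely equal to $1$ on $\overline U$. Then the zero extension of $\eta\,\cP_\cN$ is not smooth unless $\operatorname{supp}\eta$ lies inside the region where $\cP_\cN$ is defined and smooth.
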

\begin{proof}
  By potentially shrinking the tubular neighborhood $U$, we obtain a
  uniform tubular neighborhood $\widetilde{U}$ of $\cN$ and an
  associated closest point projection
  $\widetilde{\cP}_\cN:\widetilde{U}\to\cN$, which has a smooth
  extension to the boundary of $\widetilde{U}$. By the extension lemma for smooth functions,
  \cite[Lemma 2.26]{LeeSmooth},  we 
can extend $\cP_{\cN}$ to all of $\R^{\ell}$ such that the
extension $G=(g_1,\dots, g_{\ell}):\R^{\ell}\to \R^{\ell}$ is smooth,
coincides with $\cP_{\cN}$ on $\widetilde{U}$ and has compact support
contained in $U$. 
By definition we have
\[
\|\cP_{\cN}\circ f\|^2_{H^\sigma(\cM,\R^{\ell})}
=\sum_{j=1}^{\ell}\|(\cP_{\cN}\circ f)_j\|^2_{H^\sigma(\cM)}
=\sum_{j=1}^{\ell}\left(\sum_{i=1}^L\|(\chi_i\circ
\psi_i^{-1})(g_j\circ f\circ
\psi_i^{-1})\|_{H^\sigma(\psi_i(U_i))}\right)^2. 
\]
Thus, we can restrict ourselves to estimating terms of the form
$\|(\chi_i\circ \psi_i^{-1})(g_j\circ f\circ \psi_i^{-1})\|_{H^\sigma(\psi_i(U_i))}$. As $f\in
H^\sigma(\cM,\R^{\ell})$, each component $f_l\in H^\sigma(\cM)$,
$1\le l\le \ell$, and hence $(f\circ \psi_i^{-1})_l\in
H_{loc}^\sigma(\psi_i(U_i))$ by Lemma \ref{Local_representation}. As 
$\text{supp}(\chi_i\circ \psi_i^{-1})\subseteq\psi_i(U_i)$ is compact,
there exists an open, precompact set $V$ with $\text{supp}(\chi_i\circ
\psi_i^{-1})\subseteq
V\subseteq\overline{V}\subseteq\psi_i(U_i)$. According to Lemma
\ref{Characterization_locally_Sobolev}, we can choose $\Gamma\in
C_c^{\infty}(\psi_i(U_i))$ with $\Gamma\equiv 1$ on an open
neighborhood containing $\overline{V}$ and get that $w_l:=\Gamma\cdot
(f\circ \psi_i^{-1})_l\in H^\sigma (\psi_i(U_i))$ with
$w_l|_{\overline{V}}=(f\circ \psi_i^{-1})_l|_{\overline{V}}$. We write
$w=(w_1,\dots,w_{\ell}):\psi_i(U_i)=B_i\subseteq\R^m\to \R^{\ell}.$ Exploiting that
multiplication with the smooth and compactly supported function
$\chi_i\circ \psi_i^{-1}\in C_c^{\infty}(\psi_i(U_i))$ is a continuous
linear operator on $H^\sigma(\psi_i(U_i))$ (see, e.g., \cite[Corollary
  3]{Behzadan_Holst}), we obtain  
\[
 \|(\chi_i\circ \psi_i^{-1})(g_j\circ f \circ 
 \psi_i^{-1})\|_{H^\sigma(\psi_i(U_i))}=\|(\chi_i\circ
 \psi_i^{-1})(g_j\circ w)\|_{H^\sigma(\psi_i(U_i))}\leq C\|g_j\circ
 w\|_{H^\sigma(\psi_i(U_i))}. 
\]
Setting $\widetilde{g}_j:=g_j-g_j(0):\R^{\ell}\to\R$, $1\le j\le \ell$, we can apply
Lemma \ref{Nemitskij_result} to derive
\begin{align*}
    \|g_j\circ w\|_{H^\sigma(\psi_i(U_i))}&\leq c\left( \max_{1\leq l\leq
      \ell}\|w_l\|_{H^\sigma(\psi_i(U_i))}\right)\left( 1+\max_{1\leq l\leq
      \ell}\|w_l\|^{\lfloor\sigma
      \rfloor}_{L_{\infty}(\psi_i(U_i))}\right)+\|g_j(0)\|_{H^\sigma(\psi_i(U_i))}\\  
    &\leq c\left( \max_{1\leq l\leq \ell}\|\Gamma\cdot(
     f\circ
    \psi_i^{-1})_l\|_{H^\sigma(\psi_i(U_i))}\right)\left( 1+\max_{1\leq
      l\leq \ell}\|\Gamma\cdot( f\circ
    \psi_i^{-1})_l\|^{\lfloor \sigma \rfloor}_{H^\sigma(\psi_i(U_i))}\right)\\ 
    &+|g_j(0)| \text{vol}(\psi_i(U_i))^{\frac{1}{2}}.
\end{align*}
As $g_j\in C_{c}^{\infty}(\R^{ \ell})$ and $\psi_i(U_i)=B_i\subseteq\R^m$ is bounded for
$1\le i\le L$, the last summand can be bounded by a constant
$c_{\cM}>0$ independent of $f$. Using Lemma \ref{Local_representation}
we proceed with  
\begin{align*} \|(\chi_i\circ \psi_i^{-1})(g_j\circ f\circ
  \psi_i^{-1})\|_{H^\sigma(\psi_i(U_i))}&\leq c\left( \max_{1\leq
    l\leq \ell}\|f_l\|_{H^\sigma(\cM)}\right)\left(
  1+\max_{1\leq l\leq \ell}\|f_l\|^{\lfloor    \sigma \rfloor}_{H^\sigma(\cM)}\right) + c_{\cM}\\ 
&\leq c \max\left\{1, \|f\|_{H^\sigma(\cM,\R^{\ell})}\left(
  1+ \|f\|^{\lfloor  \sigma
    \rfloor}_{H^\sigma(\cM,\R^{\ell})}\right)\right\}.
\end{align*} 
Overall, this gives
\[
\|\cP_{\cN}\circ f\|_{H^\sigma(\cM,\R^{\ell})}\leq c
\max\left\{1,\|f\|_{H^\sigma(\cM,\R^{\ell})}\left( 1+ \|f\|^{\lfloor
  \sigma\rfloor}_{H^\sigma(\cM,\R^{\ell})}\right)\right\}.
\]
\end{proof}

\begin{comment}
Let $\Phi$ and $s$ be chosen as in Corollary
\ref{Error_M_Vektorwertig}. Then, there exists an $\tilde{h}_2>0$ such
that for all $X\subseteq\cM$ with $h_{X,\cM}<\tilde{h}_2$ and $f\in
H^{s}(\cM,\cN)$, the resulting vector-valued interpolant
$I_{\R^{\ell}}f$ takes values in $U$. 
In this case, $I_{X,\cN}f\in H^{s}(\cM,\cN)$ is well-defined and 
\begin{align*}
    \|I_{\cN}f\|_{H^s(\cM,\R^{\ell})}\leq c
    \max\left\{1,\|f\|_{H^s(\cM,\R^{\ell})}\left( 1+ \|f\|^{\lfloor
      s\rfloor}_{H^s(\cM,\R^{\ell})}\right)\right\}, 
\end{align*}
\end{comment}

\subsection{Kernel-based learning combined with closest point
  projections}

We now come back to studying the interpolation and approximation
operators $I_{X,\cN}= \cP_{\cN}\circ I_{X,\R^{\ell}}$ and
$Q_{X,\lambda,\cN}=\cP_{\cN}\circ Q_{X,\lambda,\R^{\ell}}$. They are using a restricted kernel
$\Psi:\cM\times\cM\to\R$, $\Psi(x,y)=\Phi(x-y)$ for each
component. This kernel is supposed to be a 
reproducing kernel of $H^\sigma(\cM)$ with $\sigma>\max\{m/2,1\}$,
which is satisfied if $\Phi$ has a Fourier transform satisfying
(\ref{ftdecay}) with $\tau> \max\{d/2,(d-m)/2+1\}$ as we have
$\sigma=\tau-(d-m)/2$. The condition on $\tau$ is for example
satisfied if $\tau>d/2$ and $m\ge 2$ or if $\tau>(d+1)/2$.

To ensure our operators to be well-defined, we need that $I_{X,\R^{\ell}} f(x)$
and $Q_{X,\lambda,\R^{\ell}}f (x)$ belong to a uniform tubular neighborhood $U$
of $\cM$. We will use standard $L_\infty$-error estimates to achieve
this. To apply these estimates, we need the target function
$f:\cM\to\cN$ to belong to a Sobolev space $H^\beta(\cM,\R^{\ell})$. Thus, we aim to
derive error bounds for interpolating functions from the set 
\[
  H^{\beta}(\cM,\cN):=\{f\in H^\beta(\cM,\R^{\ell}): f(x)\in \cN
  \text{ almost everywhere}\}.
\]

Unfortunately, using standard $L_\infty$-error estimates to ensure that $I_{X,\R^{\ell}}f$ and
$Q_{X,\lambda,\R^{\ell}}f$ map $\cM$ into the tubular neighborhood $U$
of $\cN$ also means that the necessary fill distance will depend on the function $f$. One
way of avoiding this is to restrict ourselves to functions $f\in
H^{\beta}(\cM,\cN)$ satisfying $\|f\|_{H^\beta(\cM,\R^{\ell})} \le 1$,
where $1$ can be replaced by any other constant, which, of course,
only shifts the problem to identifying whether $f$ has a bounded norm
or not.

The above results now allow us to show that our interpolation operator
$I_{X,\cN}=\cP_{\cN}\circ I_{X,\R^\ell}$ and approximation operator
$Q_{X,\lambda,\cN}=\cP_{\cN}\circ Q_{X,\lambda,\R^\ell}$ are well-defined and
they allow us to derive error estimates for them. We start with the
interpolation operator.

\begin{theorem}\label{Final_estimate}
  Let $\tau>\max\{d/2, (d-m)/2+1\}$. Let $\sigma=\tau-(d-m)/2$,
  $\beta\ge \max\{m/2,1\}$ and $1\leq q\leq \infty$. Let $U\subseteq\R^{\ell}$ be a uniform
  tubular neighborhood of $\cN$.
  Let $\Phi\in C(\R^d)\cap L_1(\R^d)$
  satisfy (\ref{ftdecay}) and let $\Psi:\cM\times\cM\to\R$ be the
  restricted kernel. Then, the following statements for the
  interpolation operator  $I_{X,\cN}$ hold.
  \begin{enumerate}
  \item If $\beta\ge \sigma$ then there are constants $h_0,C>0$ such that for all
    $X\subseteq\cM$ with $h_{X,\cM}\le h_0$ and all $f\in
    H^\beta(\cM,\mathcal{N})$ with $\|f\|_{H^\sigma(\cM,\R^{\ell})}\le 1$
    the manifold interpolation $I_{X,\cN}f$ of $f$ is well-defined and satisfies
     \[
     \|f-I_{X,\cN}f\|_{W_q^{\mu}(\cM,\R^{\ell})}\leq
     Ch_{X,\cM}^{\sigma-\mu-m(1/2-1/q)_+}\max\left\{1,\|f\|^{\lfloor
       \sigma\rfloor+1}_{H^\sigma(\cM,\R^{\ell})}+\|f\|_{H^{\sigma}(\cM,\R^{\ell})}\right\}. 
     \]
      for all $0\le \mu\le \ell(\sigma,m,q)$ with $\mu\in\N_0$ if      $q=\infty$.
  \item If $\beta<\sigma$ then there are constants $h_0,C>0$ such that for all
    quasi-uniform $X\subseteq\cM$ with $h_{X,\cM}\le h_0$ and all $f\in
    H^\beta(\cM,\mathcal{N})$ with $\|f\|_{H^\beta(\cM,\R^{\ell})}\le 1$
    the manifold interpolation $I_{X,\cN}f$ of $f$ is well defined and
    satisfies
           \[
    \|f-I_{X,\cN}f\|_{W_q^{\mu}(\cM,\R^{\ell})}\leq
    Ch_{X,\cM}^{\beta-\mu-m(1/2-1/q)_+}\max\left\{1,\|f\|^{\lfloor
      \beta\rfloor+1}_{H^{\beta}(\cM,\R^{\ell})}+\|f\|_{H^{\beta}(\cM,\R^{\ell})}\right\}.
    \]
    for all $0\le \mu\le \ell(\beta,m,q)$ with $\mu\in\N_0$ if $q=\infty$.
  \end{enumerate}

\end{theorem}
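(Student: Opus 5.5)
The plan has two stages: first show that $I_{X,\R^{\ell}}f$ takes values in $U$, so that $I_{X,\cN}f$ is defined, and then bound the error $f-I_{X,\cN}f$ by writing it as a composition error of the closest point projection.

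\emph{Step 1: well-definedness.} Let $U=E(V)$ with radius $\hat\delta$. We apply Corollary \ref{Error_M_Vektorwertig} with $q=\infty$, $\mu=0$. In case 1 this gives $\|f-I_{X,\R^\ell}f\|_{L_\infty(\cM,\R^\ell)}\le C h_{X,\cM}^{\sigma-m/2}$, since $\|f\|_{H^\sigma}\le 1$. In case 2 it gives $\le C h_{X,\cM}^{\beta-m/2}\rho_{X,\cM}^{\sigma-\beta}$, where quasi-uniformity bounds $\rho_{X,\cM}\le c_{qu}$. By \eqref{unendlich_Ungleichung}, $\|f(x)-I_{X,\R^\ell}f(x)\|_2$ is bounded by the same quantity for a.e.\ $x$. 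Continuity of the interpolant and of $f$ (by Sobolev embedding) upgrades this to all $x$. Because $f(x)\in\cN$, we get $I_{X,\R^\ell}f(x)\in U$ as soon as this bound is below $\hat\delta/2$. That fixes $h_0$ independently of $f$; this is exactly where the normalization $\|f\|\le 1$ and quasi-uniformity enter. It is better to aim for the smaller neighbourhood $\widetilde U$ from the proof of Theorem \ref{thm:ProjectionNormEstimate}, so that $\cP_\cN$ agrees with the smooth, compactly supported extension $G$ there.

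\emph{Step 2: error bound.} Since $f(x)\in\cN$, we have $f=\cP_\cN\circ f=G\circ f$. Hence
\[
f-I_{X,\cN}f=G\circ f-G\circ I_{X,\R^\ell}f=: u .
\]
The key observation is that $u$ vanishes on $X$, because $I_{X,\R^\ell}f(x_i)=f(x_i)$. So we apply the manifold sampling inequality, Theorem \ref{thm:samplingManifold}, componentwise to $u$ with the smoothness index $s=\sigma$ (case 1) or $s=\beta$ (case 2). The discrete term drops and we obtain
\[
\|u\|_{W_q^\mu}\le C h_{X,\cM}^{s-\mu-m(1/2-1/q)_+}|u|_{H^s(\cM,\R^\ell)}
\]
for the stated range of $\mu$. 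The lower-order part of the Sobolev norm is handled the same way, since Theorem \ref{thm:samplingManifold} bounds seminorms of all orders $\mu$, including $\mu=0$.

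\emph{Step 3: bounding the $H^s$-norm of $u$.} We bound $\|u\|_{H^s}\le\|G\circ f\|_{H^s}+\|G\circ I_{X,\R^\ell}f\|_{H^s}$ and estimate each term with Theorem \ref{thm:ProjectionNormEstimate}, applied with $s>\max\{1,m/2\}$. This gives $C\max\{1,\|g\|_{H^s}(1+\|g\|_{H^s}^{\lfloor s\rfloor})\}$ for $g=f$ and $g=I_{X,\R^\ell}f$, where $s=\sigma$ in case 1 and $s=\beta$ in case 2. It remains to control $\|I_{X,\R^\ell}f\|_{H^s}$ by $\|f\|_{H^s}$. In case 1 this follows from \eqref{stabvectorinterpolant} together with the norm equivalence of Proposition \ref{Native_on_M}. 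In case 2 the interpolant lives in $H^\sigma$, and we need the $H^\beta$-stability $\|I_{X,\R^\ell}f\|_{H^\beta}\le C\rho_{X,\cM}^{\sigma-\beta}\|f\|_{H^\beta}$. I would derive it from Corollary \ref{Error_M_Vektorwertig}(2) with $q=2$, $\mu=\beta$, written as $\|I f\|_{H^\beta}\le\|f\|_{H^\beta}+\|f-If\|_{H^\beta}$; quasi-uniformity then makes it uniform. This is admissible only if $\beta\le\ell(\beta,m,2)$, which holds since $q=2$ gives $\ell=\beta$. Combining the pieces yields the factor $\max\{1,\|f\|^{\lfloor s\rfloor+1}+\|f\|\}$.

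\emph{Main obstacle.} I expect the real difficulties to be technical and to sit in Step 3. One is the boundary case $\beta=1$ or $\beta=m/2$, where Theorem \ref{thm:ProjectionNormEstimate} needs strict inequality and a separate argument may be needed. The other is making sure the sampling inequality may be applied to $u$ at smoothness $s$, which requires $u\in H^s$; this is exactly what Theorem \ref{thm:ProjectionNormEstimate} provides.
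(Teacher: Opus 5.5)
Your proof is correct, and its outer structure matches the paper's. Both obtain well-definedness from the $L_\infty$ case ($q=\infty$, $\mu=0$) of Corollary~\ref{Error_M_Vektorwertig}. Both control $\|I_{X,\cN}f\|_{H^s}$ through Theorem~\ref{thm:ProjectionNormEstimate} together with stability of $I_{X,\R^{\ell}}$, and your $q=2$, $\mu=\beta$ argument for $H^\beta$-stability in case 2 is exactly the paper's.

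The difference is the core error step. The paper uses the identity $I_{X,\R^{\ell}}(I_{X,\cN}f)=I_{X,\R^{\ell}}f$, which holds because both functions agree on $X$. Together with the triangle inequality this gives
\[
\|f-I_{X,\cN}f\|\le \|f-I_{X,\R^{\ell}}f\|+\|I_{X,\R^{\ell}}(I_{X,\cN}f)-I_{X,\cN}f\|,
\]
and the paper then applies the linear error estimate of Corollary~\ref{Error_M_Vektorwertig} twice, to $f$ and to $I_{X,\cN}f$. You instead apply the sampling inequality of Theorem~\ref{thm:samplingManifold} once, directly to $u=f-I_{X,\cN}f$, which vanishes on $X$. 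Corollary~\ref{Error_M_Vektorwertig} is itself obtained by applying the sampling inequality to $f-I_Xf$, so the two arguments rest on the same mechanism.

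Your version is more direct. In case 2 it also saves one factor $\rho_{X,\cM}^{\sigma-\beta}$, because your sampling step at smoothness $\beta$ has no discrete term. Quasi-uniformity then enters only through well-definedness and the $H^\beta$-stability of $I_{X,\R^{\ell}}$. The paper's version treats the interpolation error estimate as a black box. It therefore carries over verbatim to any linear scheme that reproduces data on $X$ and has such an estimate, without reopening the sampling inequality.

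Two of your side remarks tighten the paper's write-up rather than deviate from it:
\begin{enumerate}
\item Choosing $h_0$ so that $I_{X,\R^{\ell}}f(\cM)$ lies in the shrunken neighbourhood $\widetilde U$ is the right move. The proof of Theorem~\ref{thm:ProjectionNormEstimate} identifies $\cP_\cN$ with the compactly supported extension $G$ only on $\widetilde U$, whereas the paper only checks $I_{X,\R^{\ell}}f(\cM)\subseteq U$.
\item The boundary cases $\beta=m/2$ and $\beta=1$ are not a weakness specific to your route. The paper relies on the same results (Corollary~\ref{Error_M_Vektorwertig}, Theorem~\ref{thm:ProjectionNormEstimate}), and these require strict inequality; at $\beta=m/2$ point evaluation is not even defined. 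The hypothesis is best read as $\beta>\max\{m/2,1\}$.
\end{enumerate}
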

\begin{proof}
Let $U=E(V)$ with $V=\{(p,n)\in N\cN: \|n\|_2<\hat{\delta}\}$ for $\hat{\delta}>0$.  
We start with the interpolation operator in the case of matching
smoothness $\beta\ge \sigma$.  According to the error estimate for
vector-valued interpolation from Corollary \ref{Error_M_Vektorwertig}
with $q=\infty$ and $\mu=0$, there exists a constant $\tilde{h}_0>0$
such that for all $X$ with $h_{X,\cM}\leq \tilde{h}_0$ we have 
\[
\|f-I_{X,\R^{\ell}} f\|_{L_{\infty}(\cM, \R^{\ell})}\leq
Ch_{X,\cM}^{\sigma-m/2}\|f\|_{H^{\sigma}(\cM,\R^{\ell})} \le Ch_{X,\cM}^{\sigma-m/2},
  \]
  using also the assumption on $f$. Thus, there is an $0<h_0\le
  \tilde{h}_0$ such that $\|f(x)-I_{X,\R^{\ell}}f(x)\|_2<\hat{\delta}$ by inequality
  \eqref{unendlich_Ungleichung}, showing $I_{X,\R^{\ell}}f(x)\in U$
  for all $x\in \cM$ and hence that the interpolant via projection
  $I_{X,\cN}f=\cP_{\cN}\circ I_{X,\R^{\ell}}f:\cM\to \cN$ is
  well-defined. To derive the stated error estimate, we can argue
similarly as in \cite[Section 2]{ProjectionFE}: As the vector-valued interpolant
only uses point information and the functions $f$ and $I_{X,\cN}f$ coincide on the
sampling points $X$, we have
$I_{X,\R^{\ell}}(I_{X,\cN}f)=I_{X,\R^{\ell}}f$. Hence, applying the
first statement of Corollary 
\ref{Error_M_Vektorwertig} once for $f\in H^{\sigma}(\cM,\R^{\ell})$ and a
second time for $I_{X,\cN}f\in H^{\sigma}(\cM,\R^{\ell})$, we obtain 
\begin{align}
    \|f- I_{X,\cN}f\|_{W_q^{\mu}(\cM,\R^{\ell})}&\leq  
    \|f-I_{X,\R^{\ell}}f\|_{W_q^{\mu}(\cM,\R^{\ell})} +
    \|I_{X,\R^{\ell}}(I_{X,\cN}f)-I_{X,\cN}f\|_{W_q^{\mu}(\cM,\R^{\ell})}\nonumber\\  
    &\leq
    Ch_{X,\cM}^{\sigma-\mu-m(1/2-1/q)_+}\left(\|f\|_{H^{\sigma}(\cM,\R^{\ell})}
    +\|I_{X,\cN}f\|_{H^{\sigma}(\cM,\R^{\ell})} 
    \right)
    \label{ProjectionPaperArgument}.     
\end{align}
By Theorem \ref{thm:ProjectionNormEstimate} we find
\begin{eqnarray*}
\|I_{X,\cN}f\|_{H^{\sigma}(\cM,\R^{\ell})} &=& \|\cP_{\cN}\circ
I_{X,\R^{\ell}}f\|_{H^{\sigma}(\cM,\R^{\ell})}\\
&\le& C
\max\left\{1,\|I_{X,\R^{\ell}}f\|_{H^\sigma(\cM,\R^{\ell})}
\left(1+\|I_{X,\R^{\ell}}f\|_{H^\sigma(\cM,\R^{\ell})}^{\lfloor\sigma\rfloor}\right)\right\} \\ 
&\le& C
\max\left\{1,\|f\|_{H^\sigma(\cM,\R^{\ell})}
\left(1+\|f\|_{H^\sigma(\cM,\R^{\ell})}^{\lfloor\sigma\rfloor}\right)\right\},
\end{eqnarray*}
using also the stability bound (\ref{stabvectorinterpolant}) of the
interpolant. Altogether, we  arrive at the stated
error bound.

The case of non-matching smoothness $\beta<\sigma$ is treated
similarly. Here, we only consider quasi-uniform data sets $X$, meaning
that we assume that the mesh ratio $\rho_{X,\cM}$ is uniformly
bounded, so that the $L_\infty$-error now becomes
\[
\|f-I_{X,\R^{\ell}} f\|_{L_{\infty}(\cM, \R^{\ell})}\leq
Ch_{X,\cM}^{\beta-m/2}\|f\|_{H^{\beta}(\cM,\R^{\ell})} \le Ch_{X,\cM}^{\beta-m/2},
  \]
and we can once again assure that we have $I_{X,\R^{\ell}}f(x)\in U$
for all $x\in\cM$, provided  $h_{X,\cM}$ is sufficiently small. Then,
the second statement of Corollary \ref{Error_M_Vektorwertig} shows
that (\ref{ProjectionPaperArgument}) now takes the form
\[
\|f- I_{X,\cN}f\|_{W_q^{\mu}(\cM,\R^{\ell})}\le
Ch_{X,\cM}^{\beta-\mu-m(1/2-1/q)_+}\left(\|f\|_{H^{\beta}(\cM,\R^{\ell})}
+\|I_{X,\cN}f\|_{H^{\beta}(\cM,\R^{\ell})} 
\right).
\]
An application of Theorem \ref{thm:ProjectionNormEstimate} this time
leads to
\[
\|I_{X,\cN}f\|_{H^{\beta}(\cM,\R^{\ell})} 
\le C
\max\left\{1,\|I_{X,\R^{\ell}}f\|_{H^\beta(\cM,\R^{\ell})}
\left(1+\|I_{X,\R^{\ell}}f\|_{H^\beta(\cM,\R^{\ell})}^{\lfloor\beta\rfloor}\right)\right\}.
\]
To bound $\|I_{X,\R^{\ell}}f\|_{H^\beta(\cM,\R^{\ell})}$ we cannot use
  (\ref{stabvectorinterpolant}) directly, as we are not in the
  reproducing kernel Hilbert space of the employed kernel. However,
  the second error bound from Corollary \ref{Error_M_Vektorwertig}
  yields for $q=2$,  $\mu=\beta$ and quasi-uniform $X$ the bound
  $
  \|f-I_{X,\R^{\ell}} f\|_{H^\beta(\cM,\R^{\ell})} \le C
    \|f\|_{H^\beta(\cM,\R^{\ell})},
   $
    showing also $\|I_{X,\R^{\ell}}f\|_{H^\beta(\cM,\R^{\ell})}\le C
      \|f\|_{H^\beta(\cM,\R^{\ell})}$. 
\end{proof}

Finally, we discuss the manifold-valued penalized least-squares
operator. As before, we only look at matching smoothness requirements
to keep things simple.

\begin{theorem}\label{Final_estimate_PLS}
  Let $\tau>\max\{d/2, (d-m)/2+1\}$. Let $\sigma=\tau-(d-m)/2$ and
  $\beta\ge \max\{m/2,1\}$. Let $U\subseteq\R^{\ell}$ be a uniform
  tubular neighborhood of $\cN$.
  Let $\Phi\in C(\R^d)\cap L_1(\R^d)$
  satisfy (\ref{ftdecay}) and let $\Psi:\cM\times\cM\to\R$ be the
  restricted kernel. 
 Then, there exist constants $C, h_0,\lambda_0>0$ such that for all  quasi-uniform
   $X\subseteq\cM$ with $h_{X,\cM}< h_0$, all $\lambda\le\lambda_0$ and all $f\in
   H^\sigma(\cM,\cN)$ with $\|f\|_{H^\sigma(\cM,\R^{\ell})}\le 1$ the
   penalized least-squares approximation $Q_{X,\lambda,\cN}f$ of $f$
   is well-defined and satisfies
\begin{eqnarray*}
\lefteqn{\|f-Q_{X,\lambda,\cN}f\|_{W_q^\mu(\cM,\R^{\ell})}}\\ & \le &
  Ch_{X,\cM}^{\sigma-\mu-m(1/2-1/q)_+} \left( h_{X,\cM}^{-m/2} +
    \frac{1}{\sqrt{\lambda}}
    h_{X,\cM}^{\sigma-m}\right)\max\left\{1,\|f\|^{\lfloor
       \sigma\rfloor+1}_{H^\sigma(\cM,\R^{\ell})}+\|f\|_{H^{\sigma}(\cM,\R^{\ell})}\right\}\\
    & & \mbox{} +
    Ch_{X,\cM}^{m/\gamma-\mu}\left(\sqrt{\lambda}h_{X,\cM}^{-m/2}+h_{X,\cM}^{\sigma-m}\right)
    \max\left\{1,\|f\|^{\lfloor
       \sigma\rfloor+1}_{H^\sigma(\cM,\R^{\ell})}+\|f\|_{H^{\sigma}(\cM,\R^{\ell})}\right\}. 
\end{eqnarray*}
      for all $0\le \mu\le \ell(\sigma,m,q)$ with $\mu\in\N_0$ if
      $q=\infty$.
\end{theorem}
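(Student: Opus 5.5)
We follow the proof of Theorem \ref{Final_estimate}, using Corollary \ref{Error-quasi-M-vector} in place of the interpolation bounds. The key point is that $Q_{X,\lambda,\R^\ell}$ only sees point values, so $Q_{X,\lambda,\R^\ell}$ applied to the data $f(X)$ coincides with $Q_{X,\lambda,\R^\ell}$ applied to the data $(I_{X,\cN}f)(X)$.

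\emph{Well-definedness.} Corollary \ref{Error-quasi-M-vector} with $q=\infty$, $\mu=0$ (so $\gamma=\infty$) and exact data gives
\[
\|f-Q_{X,\lambda,\R^\ell}f\|_{L_\infty(\cM,\R^\ell)}\le C\bigl(h_{X,\cM}^{\sigma-m/2}+\sqrt{\lambda}\bigr)\|f\|_{H^\sigma(\cM,\R^\ell)}\le C\bigl(h_{X,\cM}^{\sigma-m/2}+\sqrt{\lambda}\bigr).
\]
By \eqref{unendlich_Ungleichung} this is below the tube radius $\hat\delta$ once $h_0$ and $\lambda_0$ are small enough. Hence $Q_{X,\lambda,\R^\ell}f(x)\in U$ for all $x\in\cM$, and $Q_{X,\lambda,\cN}f$ is well-defined.

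\emph{Error splitting.} Set $g:=Q_{X,\lambda,\cN}f\in H^\sigma(\cM,\cN)$ and $Y:=f(X)$. Then
\[
\|f-g\|_{W_q^\mu}\le \|f-Q_{X,\lambda,\R^\ell}Y\|_{W_q^\mu}+\|g-Q_{X,\lambda,\R^\ell}Y\|_{W_q^\mu}.
\]
For the first term, apply Corollary \ref{Error-quasi-M-vector} with exact data; it is bounded by $C(h^{\sigma-\mu-m(1/2-1/q)_+}+\sqrt\lambda h^{m/\gamma-\mu})\|f\|_{H^\sigma}$. For the second term, apply the same corollary with target function $g$ and \emph{mismatched} data $Y$. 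This produces
\[
C\bigl(h^{\sigma-\mu-m(1/2-1/q)_+}+\sqrt\lambda h^{m/\gamma-\mu}\bigr)\|g\|_{H^\sigma}+C\bigl(h^{\sigma-\mu-m(1/2-1/q)_+}\lambda^{-1/2}+h^{m/\gamma-\mu}\bigr)\|g(X)-f(X)\|_2 .
\]
Theorem \ref{thm:ProjectionNormEstimate} together with \eqref{qxl1} (exact data) gives $\|Q_{X,\lambda,\R^\ell}f\|_{H^\sigma}\le\|f\|_{H^\sigma}$, hence $\|g\|_{H^\sigma}\le C\max\{1,\|f\|^{\lfloor\sigma\rfloor+1}+\|f\|\}$.

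\emph{The data mismatch (main obstacle).} Write $s:=Q_{X,\lambda,\R^\ell}f$. Since $f(x_j)\in\cN$ is at least as close to $s(x_j)$ as any other point of $\cN$, we have $\|g(x_j)-s(x_j)\|_2\le\|f(x_j)-s(x_j)\|_2$, and therefore
\[
\|g(x_j)-f(x_j)\|_2\le 2\|f-s\|_{L_\infty(\cM,\R^\ell)}.
\]
Summing over the $N$ points gives $\|g(X)-f(X)\|_2\le 2\sqrt N\,\|f-s\|_{L_\infty}$. By quasi-uniformity, $N\le Ch^{-m}$, so $\sqrt N\le Ch^{-m/2}$. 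For the $L_\infty$ bound we take the sampling-type estimate $\|f-s\|_{L_\infty}\le C(h^{\sigma-m/2}+\sqrt\lambda)$, which yields
\[
\|g(X)-f(X)\|_2\le C\bigl(h^{\sigma-m}+\sqrt\lambda\,h^{-m/2}\bigr)\|f\|_{H^\sigma}.
\]
Inserting this into the mismatch term gives exactly the two $\lambda$-dependent contributions in the statement: $h^{\sigma-\mu-m(1/2-1/q)_+}(h^{-m/2}+\lambda^{-1/2}h^{\sigma-m})$ and $h^{m/\gamma-\mu}(\sqrt\lambda h^{-m/2}+h^{\sigma-m})$. The remaining terms are absorbed into these, since $h^{-m/2}\ge1$ for small $h$ and $\|f\|\le1$.

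Finally, collecting all terms and bounding $\|f\|_{H^\sigma}$ by $\max\{1,\|f\|^{\lfloor\sigma\rfloor+1}+\|f\|\}$ gives the claimed estimate. The delicate step is the discrete mismatch: the counting bound $\sqrt N\lesssim h^{-m/2}$ uses quasi-uniformity and a packing argument on $\cM$.
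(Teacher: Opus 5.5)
Your proposal is correct. It differs from the paper's proof only in how the error is decomposed.

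With $Q=Q_{X,\lambda,\R^{\ell}}$ and $Q_{\cN}=Q_{X,\lambda,\cN}$, the paper uses a three-term split, $f-Q_{\cN}f=(f-Qf)+(Qf-QQ_{\cN}f)+(QQ_{\cN}f-Q_{\cN}f)$. The middle term is handled by the mismatched-data estimate with target $Qf$ and data $(Q_{\cN}f)(X)$. Its mismatch $\|Qf(X)-\cP_{\cN}Qf(X)\|_2$ then follows in one step from the closest-point property. The third term is handled by the exact-data estimate with target $Q_{\cN}f$.

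You merge the last two terms into $Qf-Q_{\cN}f$ and apply the mismatched-data estimate once, with target $Q_{\cN}f$ and data $f(X)$. The price is that your mismatch $\|(Q_{\cN}f)(X)-f(X)\|_2$ needs the triangle inequality on top of the closest-point property, which costs a harmless factor $2$.

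Both routes use the same ingredients:
\begin{enumerate}
\item the $L_\infty$ bound for well-definedness;
\item Theorem \ref{thm:ProjectionNormEstimate} combined with \eqref{qxl1} to control $\|Q_{\cN}f\|_{H^\sigma(\cM,\R^{\ell})}$;
\item the counting bound $\sqrt N\le Ch_{X,\cM}^{-m/2}$ for quasi-uniform $X$.
\end{enumerate}
Both end with the identical bound, including the same loss of $h_{X,\cM}^{-m/2}$. Your version is slightly shorter because it avoids the auxiliary object $QQ_{\cN}f$. It also makes clear that the only genuinely new quantity is the discrepancy between the projected approximant and the data at the sample points.

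One cosmetic point concerns your opening remark about $(I_{X,\cN}f)(X)$. It is true but unused, and somewhat misleading in this context. The corresponding identity $(Q_{X,\lambda,\cN}f)(X)=f(X)$ fails, which is exactly why a mismatch term appears at all. You can drop that sentence.
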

\begin{proof}
  To simplify the notation in this proof, we will use the abbreviation
  $Q=Q_{X,\lambda,\R^{\ell}}$ and $Q_{\cN}=Q_{X,\lambda,\cN}$. The proof
starts as in the case of the interpolation operator. We 
  first note that Corollary \ref{Error-quasi-M-vector} gives for exact
  data $Y=f(X)$ and quasi-uniform data sets $X$ the 
  $L_\infty$-bound
\[
\|f-Q f\|_{L_\infty(\cM,\R^{\ell})} \le C
\left(h_{X,\cM}^{\sigma-m/2}+ \sqrt{\lambda}\right)
\|f\|_{H^\sigma(\cM,\R^{\ell})} \le C\left(h_{X,\cM}^{\sigma-m/2} +\sqrt{\lambda}\right),
\]
taking also $\|f\|_{H^\sigma(\cM,\R^{\ell})}\le 1$ into account. Thus, 
we can choose an $h_0>0$ and a $\lambda_0>0$ such that
$\|f-Qf\|_{L_\infty(\cM,\R^{\ell})} \le
\hat{\delta}$ for all $\lambda\le\lambda_0$ and $h_{X,\cM}\le
h_0$. Hence, in this situation, the approximation  $Q_{\cN}
f =\cP_{\cN}\circ Qf$ is 
well-defined. Unfortunately, as we do not interpolate the data, we
cannot proceed directly as in the interpolation case. Instead we split
the error into three terms,
\begin{equation}\label{error-split}
f-Q_{\cN} f = (f - Qf) + (Q f -
Q Q_{\cN}f ) + (Q Q_{\cN}
f - Q_{\cN}f),
\end{equation}
and bound each term separately. Bounds on the first and third term are
easily derived from Corollary \ref{Error-quasi-M-vector} using again
exact data $Y=f(X)$ and $Y=Q_{\cN}f(X)$, respectively. For the
first term we immediately have 
\[
\|f-Q f\|_{W_q^\mu(\cM,\R^{\ell})} \le C
\left(h_{X,\cM}^{\sigma-\mu-m(1/2-1/q)_+} + \sqrt{\lambda}h_{X,\cM}^{m/\gamma-\mu}\right)
\|f\|_{H^\sigma(\cM,\R^{\ell})}
\]
and for the third term in (\ref{error-split}) we find 
\[
\|Q_{\cN}f-Q Q_{\cN} f\|_{W_q^\mu(\cM,\R^{\ell})} \le C
\left(h_{X,\cM}^{\sigma-\mu-m(1/2-1/q)_+} + \sqrt{\lambda}h_{X,\cM}^{m/\gamma-\mu}\right)
\|Q_{\cN} f\|_{H^\sigma(\cM,\R^{\ell})}.
\]
Using (\ref{qxl1}) with $Y=f(X)$ shows $\|Q
f\|_{H^\sigma(\cM,\R^{\ell})} \le \|f\|_{H^\sigma(\cM,\R^{\ell})}$ such
that Theorem \ref{thm:ProjectionNormEstimate} gives us once again
\begin{eqnarray*} 
\|Q_{\cN} f\|_{H^\sigma(\cM,\R^{\ell})} & = & \|\cP_{\cN}\circ
Q_{\cN}f\|_{H^\sigma(\cM,\R^{\ell})} \\
& \le &
 C
 \max\left\{1,\|f\|_{H^\sigma(\cM,\R^{\ell})}
 \left(1+\|f\|_{H^\sigma(\cM,\R^{\ell})}^{\lfloor\sigma\rfloor}\right)\right\},
\end{eqnarray*}
which finishes the bound on the third term in (\ref{error-split}). For
the second term, we notice $QQ_{\cN}f = Q \cP_{\cN}Q f$. Hence, if we set
$g=Qf$ and $\widetilde{g}=\cP_{\cN}Qf$ then the second term becomes
$g-Q\widetilde{g}$. Thus, we can apply Corollary
\ref{Error-quasi-M-vector} with $f$ given by $g$ and the data given by
$Y=\widetilde{g}(X)$. This gives the bound
\begin{eqnarray*}
  \|Qf-QQ_{\cN}f\|_{W_q^\mu(\cM,\R^{\ell})} & \le &
  C\left( h_{X,\cM}^{\sigma-\mu-m(1/2-1/q)_+} + \sqrt{\lambda}
    h_{X,\cM}^{m/\gamma-\mu}\right) \|Qf\|_{H^\sigma(\cM,\R^{\ell})} \\
    & & \mbox{} + C\left(h_{X,\cM}^{\sigma-\mu-m(1/2-1/q)_+}
      \frac{1}{\sqrt{\lambda}} + h_{X,\cM}^{m/\gamma-\mu}\right)
      \|Qf-\cP_{\cN}Qf\|_{\ell_2(X)}.
\end{eqnarray*}
As before, we have $\|Qf\|_{H^\sigma(\cM,\R^{\ell})} \le
\|f\|_{H^\sigma(\cM,\R^{\ell})}$. Moreover, as $X$ is quasi-uniform,
the number of points $N$ in $X$ can be bounded by a constant times
$h_{X,\cM}^{-m}$. 
\begin{comment}Next, the closest point projection obviously
satisfies $\|u-\cP_\cN u\|_2 \le \hat{\delta}$ for all $u\in U$ and according to
the considerations at the beginning of this proof, we can choose
$h_{X,\cM}$  and $\lambda$ such that 
$\hat{\delta} \ge C (h_{X,\cM}^{\sigma-m/2} +
\sqrt{\lambda})\|f\|_{H^\sigma(\cM,\R^{\ell})}$. This all together
yields \textcolor{red}{Meines Erachtens steht $\mathcal{P}_{\cM}Qf$ auf der rechten Seite der folgenden Ungleichung UND die Ungleichungen gehen doch in die komplett falschen Richtungen...}
\end{comment}
As $\|Qf(x)-\mathcal{P}_{\cM}Qf(x)\|_2\leq \|Qf(x)-f(x)\|_2$ for all $x\in \Omega$ due to the definition of the closest point projection, we get
\begin{eqnarray*}
  \|Qf-\cP_{\cN} Qf\|_{\ell_2(X)} & \le & C h_{X,\cM}^{-m/2}
  \|Qf-f\|_{\ell_\infty(X)}\\
  & \le& C h_{X,\cM}^{-m/2}\left(h_{X,\cM}^{\sigma-m/2} +
  \sqrt{\lambda}\right)\|f\|_{H^\sigma(\cM,\R^{\ell})},
\end{eqnarray*}
where we also used the arguments at the beginning of this proof.
This leads to the bound
\begin{eqnarray*}
  \|Qf-QQ_{\cN}f\|_{W_q^\mu(\cM,\R^{\ell})} & \le &
  Ch_{X,\cM}^{\sigma-\mu-m(1/2-1/q)_+} \left( h_{X,\cM}^{-m/2} +
    \frac{1}{\sqrt{\lambda}}
    h_{X,\cM}^{\sigma-m}\right)\|f\|_{H^\sigma(\cM,\R^{\ell})}\\
    & & \mbox{} +
    Ch_{X,\cM}^{m/\gamma-\mu}\left(\sqrt{\lambda}h_{X,\cM}^{-m/2}+h_{X,\cM}^{\sigma-m}\right)
    \|f\|_{H^\sigma(\cM,\R^{\ell})}
\end{eqnarray*}
when collecting only the dominant terms. All this together gives the
stated error bound.
\end{proof}

To better understand this bound, let us, for simplicity, look at the
case $q=2$. Using the additional assumption that
$\|f\|_{H^\sigma(\cM,\R^{\ell})} \le 1$, we have with $\gamma=2$ and $h=h_{X,\cM}$,
\begin{eqnarray*}
\|f-Q_{X,\lambda,\cN}f\|_{H^\mu(\cM,\R^{\ell})} &\le& C
h^{\sigma-\mu}\left(h^{-m/2} + \frac{1}{\sqrt{\lambda}}
h^{\sigma-m}\right) + C h^{m/2-\mu}\left(\sqrt{\lambda} h^{-m/2}
+h^{\sigma-m}\right)\\
& \le & C\left( h^{\sigma-\mu-m/2} + \frac{1}{\sqrt{\lambda}}
  h^{2\sigma-\mu-m} + \sqrt{\lambda} h^{-\mu}\right).
\end{eqnarray*}
Thus, the choice $\lambda = h^{2\sigma-m}$ leads finally the bound
\[
\|f-Q_{X,\lambda,\cN}f\|_{H^\mu(\cM,\R^{\ell})} \le C h^{\sigma-\mu-m/2},
\]
which gives convergence for $0\le \mu\le \sigma-m/2$, though it holds
for $0\le \mu\le \sigma$. The unexpected loss of $m/2$ in the
approximation order is a result of bounding the $\ell_2(X)$-norm by
$h^{-m/2}$ times the $\ell_\infty(X)$-norm.

Let us conclude our theoretical discussion with a few remarks
concerning these error estimates.
In general, it is not possible to quantify the radius $\hat{\delta}$
of the uniform tubular neighborhood $U$ of $\cN$.  
However, as we shall see in the next section, such estimates for
$\hat{\delta}$ can sometimes be obtained once a specific manifold
$\cN$ is fixed.  
Moreover, even when $\hat{\delta}$ is known, we cannot determine how
small the constant $h_0$ must be chosen, since the norm
$\|f\|_{H^{\sigma}(\cM,\R^{\ell})}$ of the unknown function $f$ is typically
not available.  

Depending on the particular choice of the compact embedded submanifold
$\cN$, one must also address how the projection operator $\cP_\cN$ can
be implemented numerically.  
For certain manifolds, efficient methods to compute best
approximations for points within tubular neighborhoods exist.  
An example will be presented in Section \ref{Numerics}.

The choices $q=\infty$ and $\mu=0$ in our error estimates, give bounds
for the pointwise error. Because 
of Definition \ref{Vector_valued_Sobolev} this error is measured in
the Euclidean norm of the ambient space $\R^{\ell}$ of $\cN$. However, for a
connected, compact embedded submanifold $\cN\subseteq\R^{\ell}$ this
distance is strongly equivalent to the intrinsic distance
$d_{\cN}:\cN\times \cN\to \R$ according to \cite[Theorem
  6]{Fuselier-Wright-12-1}, so that we can expect the same convergence order in this metric. More generally, for compact manifolds arbitrary Riemannian metrics induce strongly equivalent distances and our error bounds hold analogously.

So far we have only considered embedded submanifolds of Euclidean
spaces. However, our method can easily be generalized to arbitrary
compact manifolds $\cM$ and $\cN$. The Nash embedding theorem
\cite{Nash} guarantees that every $m$-dimensional compact Riemannian
manifold can be isometrically embedded into some $\R^d$ with $d \leq
m(3m+11)/2$. The image $\iota(\cM)\subseteq \R^d$ of such an embedding
is an $m$-dimensional submanifold of $\R^d$, which is compact if $\cM$
is compact. Choosing isometric embeddings $\iota_1:\cM\to \R^{d}$ and
$\iota_2:\cN\to \R^{\ell}$ 
we can again restrict positive definite kernels on the ambient space
$\R^{d}$ to positive definite kernels on $\iota_1(\cM)$. As $\iota_1$
is an isometry, we get $h_{X,\cM}=h_{\iota_1(X),\iota_1(\cM)}$. At the
same time, we can choose a uniform tubular neighborhood $U$ of
$\iota_2(\cN)\subseteq\R^{\ell}$, as well as an associated closest
point projection. One obtains results analogous to those in
Theorems~\ref{Final_estimate} and \ref{Final_estimate_PLS}.
However, note that the choice of an embedding for a given compact manifold is far from trivial in general and obviously highly depends on the specific manifold.
Even if such an embedding is known explicitly, it is not necessarily useful in practice: It is important that the corresponding tubular neighborhood is as big as possible, while at the same time the associated closest point projection is realizable numerically. Fortunately, for some manifolds appropriate embeddings and corresponding numerical realizations of the closest point projection are known. We quickly mention such an example in the following section. Of course, one also has to keep in mind the curse of dimensionality, when embedding the manifold $\cM$ via the embedding $\iota_1$.

%%%%%%%%%%%%%%%%%%%%%%%%%%%%%%%%%%%%%%%%%%%%%%%%%

\section{Numerical examples}\label{Numerics}
We conclude this article with several numerical examples that
illustrate our method and the theoretical error bounds established in
Theorem~\ref{Final_estimate}. In the first of the two following
sections, we consider two academic examples, while in the second
section we apply our methodology to a real-world example from
crystallography taken from \cite{HielscherLippert}. 

\subsection{Academic examples involving Stiefel manifolds}
We consider the Stiefel manifold 
$\St(n,r):=\{S\in \R^{n\times r}: S^\transpose S=I_r\}$
of column-orthogonal real matrices in $\R^{n\times r}$,  
where obviously $n\geq r$. (We can identify $\R^{n\times r}$ with
$\R^{nr}$ to perfectly fit the situation from before). For $n=r$, this
set reduces to the orthonormal matrices $\St(n,n)=O(n)$. For $r=1$, we
obtain the unit sphere $\St(n,1)=S^{n-1}\subseteq\R^n$. As
$\St(n,r)=\Phi^{-1}(\{0_{r\times r}\})$ with 
\[
\Phi:\R^{n\times r}\to \text{sym}(r),\quad S\mapsto S^TS-I_r,
\]
and $\Phi$ is a smooth submersion, $\St(n,r)$ is an embedded
submanifold of $\R^{n\times r}$ of dimension $nr-r(r+1)/2$ according
to the Submersion level set theorem \cite[Corollary
  5.13]{LeeSmooth}. We also conclude that $\St(n,r)$ is closed and
hence compact by Heine-Borel, as $\|S\|_F=\sqrt{\text{tr}(S^\transpose
  S)}=1$
for $S\in \St(n,r)$. Therefore, our method is applicable for domain
$\cM$ and codomain $\cN$ chosen as Stiefel manifolds. 

For a first example we consider a function between
$\cM=S^1\subseteq\R^2$ and $\cN=S^2\subseteq\R^3$. We have already
observed that in this case, there is a uniform tubular neighborhood
$U$ of size $\hat{\delta}=1$ and the associated closest point
projection reduces to normalization, $\cP_{\cN}(v)=v/\|v\|_2$ for
$v\in U$. For $\varepsilon>0$ we consider the function 
\[
f_{\varepsilon}:S^1\to S^2,\quad
f_{\varepsilon}(x,y)=S(\|(x,y)-(1,0)\|_2^{\varepsilon},
3\|(x,y)-(1,0)\|_2^{\varepsilon}),
\]
where $S(\theta,\phi)$ denotes the point on the unit sphere $S^2$ with
spherical coordinates $(\theta,\varphi)$, using the convention that
$\theta$ is the polar angle and $\varphi$ the azimuthal angle. We
extend the spherical coordinate map to all of $\R\times
\R$ in the obvious periodic way. Extended to $\R^2$ the
function $f_{\varepsilon}$ is contained in $H^{2.5}(\R^2,\R^2)$ for
$\varepsilon>1.5$, and thus  $f_{\varepsilon}\in H^{2}(S^1, S^2)$ for
$\varepsilon>1.5$. For $\varepsilon=1.51$, we sample $f_{\varepsilon}$
at $N\in\{5\cdot 2^{i}: 0\leq i\leq 11\}$ points on $S^1$, which are
obtained by mapping equidistant points on the interval $[0,2\pi]$ to
$S^1$ via polar coordinates. We interpolate $f_{\varepsilon}$ based on
this data and a restricted positive definite kernel
$\Psi=K|_{\cM\times\cM}$ built from the Wendland function
$\phi_{2,1}$, i.e.,  
\[
K(x,y)=\Phi_{2,1}(x-y)=\phi_{2,1}(\|x-y\|_2)=\max\{0,
(1-\|x-y\|_2)^4\}\cdot (4\|x-y\|_2+1).
\]
With the notation from earlier, we have $\tau=d/2+k+1/2=2.5$,
$\sigma=\tau-(d-m)/2=2$, and hence expect a convergence of order $2-\mu-
(1/2-1/q)_+$ for the error measured in the
$W_q^{\mu}(S^1,\R^3)$-norm. For the $L_{\infty}$-error this means a
convergence of order $\cO(h_{X,\cM}^{1.5})$. The approximation
errors for our interpolant $I_{X,\cN}f_{\varepsilon}$ and the
interpolant $I_{X,\R^{\ell}}f_{\varepsilon}$ in the ambient space are
displayed in Figure \ref{fig:ex1errors}. Convergence orders are
numerically estimated in Table \ref{tab:filldistance} and match our
theoretical results. Here, the errors were estimated with the help of
$30000$ validation points, created analogously to the sample points
used for interpolation. Also note that both interpolants do not differ
significantly anymore, once the fill distance is below a certain
threshold. Nevertheless, the ambient interpolant
$I_{X,\R^{\ell}}f_{\varepsilon}$ will typically not take values in $S^2$
outside the sample locations.  

\begin{figure}[h!]
\centering

\begin{subfigure}{0.48\textwidth}
\centering
\includegraphics[width=\linewidth]{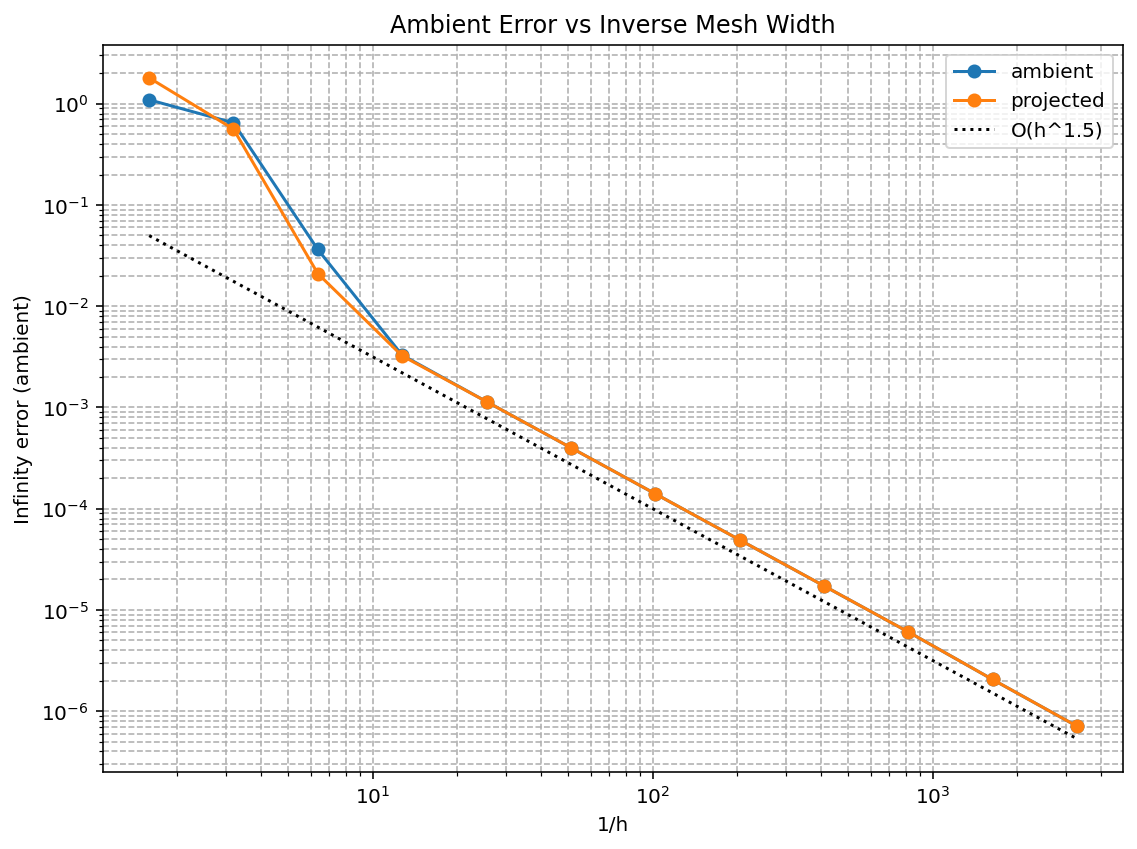}
\vfill
\caption{$L_{\infty}(S^1,\R^3)$ error of the ambient
  interpolant $I_{X,\R^{\ell}}f_{\varepsilon}$ and the projected
  interpolant $I_{X,\cN}f_{\varepsilon}$ plotted against the estimated
  fill distance.} 
\label{fig:ex1errors}
\end{subfigure}
\hfill
\begin{subfigure}{0.48\textwidth}
\centering
\begin{tabular}{c c c}
\hline
$N$ & $h_{X,\Omega}$ & order \\
\hline
5     & $6.28318531\times10^{-1}$ & \\
10    & $3.14159265\times10^{-1}$ & 1.68180654 \\
20    & $1.57079633\times10^{-1}$ & 4.74012179 \\
40    & $7.85398163\times10^{-2}$ & 2.69909133 \\
80    & $3.91651884\times10^{-2}$ & 1.50246076 \\
160   & $1.95825942\times10^{-2}$ & 1.51022242 \\
320   & $9.79129710\times10^{-3}$ & 1.51054018 \\
640   & $4.89564855\times10^{-3}$ & 1.51005035 \\
1280  & $2.44782428\times10^{-3}$ & 1.51000746 \\
2560  & $1.22391214\times10^{-3}$ & 1.51000014 \\
5120  & $6.11956069\times10^{-4}$ & 1.56084848 \\
10240 & $3.05978035\times10^{-4}$ & 1.53119225 \\
\hline
\end{tabular}
\vfill
\caption{Numbers of samples, estimated fill distances and estimated convergence orders for interpolating $f_{\varepsilon}$ by $I_{X,\cN}f_{\varepsilon}$.}
\label{tab:filldistance}
\end{subfigure}

\end{figure}

Interestingly, uniform tubular neighborhoods and closest point
projections are also known for $\St(n,r)$ with general parameters
$n\geq r$. According to Theorem 4.1 from \cite{Higham}, if $A\in
\R^{n\times r}$ is a full rank matrix, there exists a unique closest
point (with respect to the norm of the ambient $\R^{nr}$, i.e., the
Frobenius norm) to $A$ from $\St(n,r)$, which is simply the factor
$S\in \St(n,r)$ in the polar decomposition $A=SP$ (Recall that each
full rank matrix $A\in \R^{n\times r}$, $n\geq r$, has a unique
decomposition $A=SP$ into a matrix $S\in \St(n,r)$ and a symmetric
positive definite matrix $P\in \R^{r\times r}$). The size of a uniform
tubular neighborhood can also be quantified. According to
\cite[Theorem 7.41]{Wendland-17-2}, the distance of a matrix $S$ with non-zero
singular values $\sigma_1\geq\sigma_2\geq\dots,\geq \sigma_r$ to the
closest matrix of rank $k<r$ measured in the Frobenius norm is given
by $\left(\sum_{j=k+1}^r \sigma_j^2\right)^{1/2}$.  As a Stiefel matrix $S\in \St(n,r)$
has all singular values equal to $1$, it follows that the closest
 matrix $W\in \R^{n\times r}$ to $S$
 which does not have full rank, fulfills $\|S-W\|_F=\sigma_r =1.$
 %As the
 %Frobenius norm is always bigger than or equal to the $2$-norm,
 Thus, there
exists a uniform tubular neighborhood of $\St(n,r)$ of size
$\hat{\delta}=1$. Overall, for all matrices $A\in \R^{n\times r}$
which have a Frobenius distance less than $1$ to $\St(n,r)$, we can
compute a unique closest point from $\St(n,r)$ with the help of the
polar decomposition. 

This is used in our second  example, where we interpolate a function
$g$ defined on $S^1\subseteq\R^2$, which takes values in
$\St(4,2)$. To construct an interesting example, we use that the
tangent space at $A\in \St(n,r)$ can be characterized as 
$$T_A\St(n,r)=\{\Delta = AV+A_{\perp}K: V\in \text{skew}(r), K\in \R^{(n-r)\times r}\}\subseteq\R^{n\times r},$$
where $A_{\perp}\in \R^{n\times(n-r)}$ is an orthogonal matrix whose columns span the orthogonal complement of the range of $A$. The corresponding Riemannian exponential $\exp_A:T_A\St(n,r)\to \St(n,r)$ is given by 
$$\exp_A(\Delta) = (A,A_{\perp})\exp_m\left(\begin{pmatrix}2V& -K^T\\
    K & 0\end{pmatrix}\right)I_{n\times r}\exp_m(-V),$$
    where $I_{n\times r}:=\begin{pmatrix} I_r\\
    0_{(n-r)\times r}
    \end{pmatrix}\in \R^{n\times r}$ and $\exp_m:\R^{n\times n}\to \R^{n\times n}$ denotes the matrix exponential.
Therefore, for the function
 $$g:S^1\to \St(4,2),\quad x\mapsto \exp_A\left(Af_1(x)\cdot \begin{pmatrix}
     0 & 1\\
     -1 & 0
 \end{pmatrix}+A_{\perp}f_2(x)\cdot \begin{pmatrix}
     1 & 0\\
     0 & 1
 \end{pmatrix}\right)$$
 with $f_1=f_2:S^1\to \R,\ x\mapsto \|(x,y)-(1,0)\|_2^{\varepsilon}$
 and $A=I_{n\times r}$, we have $g\in H^{4}(S^1, \St(4,2))$ for
 $\varepsilon>3.5$. We repeat our procedure with $\varepsilon = 3.51$,
 sample sets of size $N\in\{5\cdot 2^{i}: 0\leq i\leq 8\}$ and a
 validation set with $20000$ points, both created as before. This time
 we use the Wendland kernel $\Phi_{2,3}$, whose restricted native
 space corresponds to functions in $H^4(S^1,\St(4,2))$. According to
 Theorem \ref{Final_estimate} we expect convergence order
 $\cO(h_{X,\Omega}^{3.5})$ for the pointwise error. Corresponding
 results are displayed in Figure \ref{fig:ex2errors} and Table
 \ref{tab:filldistance2}. Again, the results match our theoretical
 expectations.

 \begin{figure}[h!]
\centering
\begin{subfigure}{0.48\textwidth}
\centering
\includegraphics[width=\linewidth]{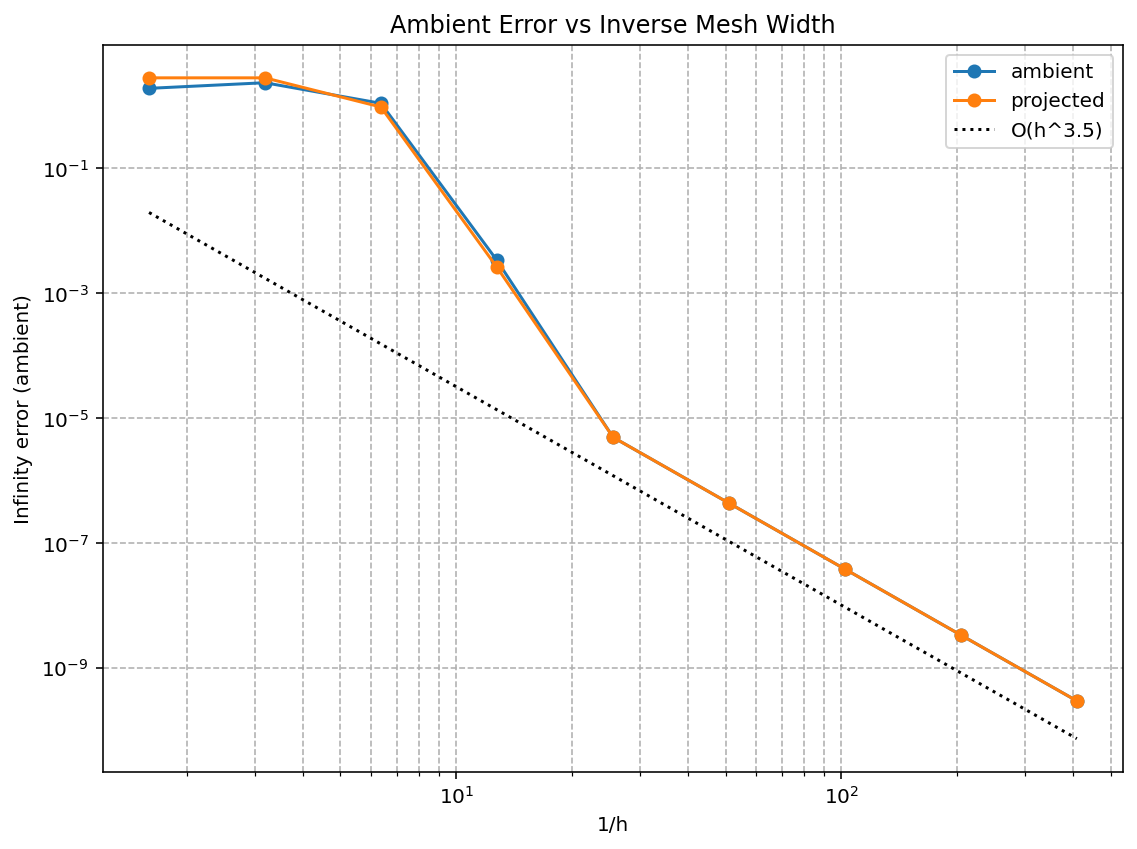}
\caption{$L_{\infty}(S^1,\St(4,2))$ error of the ambient interpolant $I_{X,\R^{\ell}}g$ and the projected interpolant $I_{X,\cN}g$ plotted against the estimated fill distance.}
\label{fig:ex2errors}
\end{subfigure}
\hfill
\begin{subfigure}{0.48\textwidth}
\centering
\begin{tabular}{c c c}
\hline
$N$ & $h_{X,\Omega}$ & order \\
\hline
5     & $0.62831853$ & \\
    10    & $0.31415927$ & $0.00000007$\\
    20    & $0.15707963$ & $1.55245434$  \\
    40    & $0.07853982$ & $8.51857713$ \\
    80    & $0.03916519$ & $9.03296693$ \\
    160   & $0.01958259$ & $3.50802392$ \\
    320   & $0.00979131$  & $3.50982539$ \\
    640   & $0.00489565$ & $3.50989715$\\
    1280  & $0.00244782$ & $3.50380197$ \\
\hline
\end{tabular}
\caption{Numbers of samples, estimated fill distances and estimated convergence orders for interpolating $g$ by $I_{X,\cN}g$.}
\label{tab:filldistance2}
\end{subfigure}

\end{figure}

\subsection{Wave velocities in crystalline materials}
We conclude our numerical section with an example from
crystallography, which originally stems from \cite[Section
  4.1]{HielscherLippert}. In seismology, geologic properties of the
earth can be investigated by analyzing the propagation of earthquake
waves. Their propagation velocity and polarization direction depend on
the propagation direction, i.e., the direction in which the wave
travels, relative to the crystal lattice of the minerals in the mantle
of the earth. This directional dependence is commonly described by the
Christoffel equation from linear elasticity, an eigenvalue problem
involving the Christoffel tensor, which itself is derived from the
effective stiffness tensor of the polycrystalline aggregate. The
effective stiffness tensor is obtained by rotating and averaging the
single-crystal stiffness tensors (in our case, those of the silicate
mineral olivine) according to a prescribed orientation distribution
function. 

Solving this eigenvalue problem for propagation directions in $S^2$
(for example conveniently with the help of the open-source MATLAB
toolbox MTEX), one ends up with a function $f : S^2 \to \R P^2,$ 
which maps each propagation direction to the corresponding
polarization direction. Here, $\R P^2$ denotes the real
projective plane, that is, the space of all lines through the origin
in $\R^3$. In \cite[Section 4.1]{HielscherLippert}, this
function is reconstructed for given material properties from a finite
number of measurements. To this end, $\R P^2$ is identified
with $S^2/_ {\sim}$ via the equivalence relation $x\sim -x$, and
afterwards embedded into the space of real symmetric matrices
$\text{sym}(3)\subseteq\R^{3\times 3}$ via the embedding $\iota:
S^2/_{\sim}\to \text{sym}(3),\ \iota(x)=xx^T.$ It is shown in
\cite[Lemma 4.1]{HielscherLippert} that  
\[
U=E(\{(p,n)\in N(\iota(S^2/_{\sim})): \|n\|_F<\hat{\delta}\})
\]
with $\hat{\delta}=\frac{1}{\sqrt{2}}$ is a uniform tubular
neighborhood of $\iota(S^2/_{\sim})$. Furthermore, the closest point
projection $\cP_{\cN}: U\subseteq\text{sym}(3) \to
\cN=\iota(S^2/_{\sim})$ with respect to the Frobenius norm is given by 
\[
\cP_{\cN}(B)=uu^T,\quad B\in \sym(3),
\]
where $u\in \R^3$ is a normalized eigenvector to the eigenvalue of $B$
which has the largest absolute value. Calculating an ambient
interpolant in $\text{sym(3)}$ via spherical harmonics and using the
closest point projection $\cP_{\cN}$, the authors reconstruct $f$ from
its values at 144 Chebyshev quadrature nodes. We repeat this
calculation with our interpolation method, using a Wendland kernel
$\Phi_{3,1}$ without further scaling. In Figure \ref{fig:Vergleich},
we show the pointwise error (measured in the Frobenius norm) of the
approximation method from \cite{HielscherLippert} and of our
interpolation $I_{X,\cN}f$, and compare the difference between both
approximations for directions in the upper hemisphere of $S^2$. For
computing the approximation via spherical harmonics we used the
implementation made available in \cite{HielscherLippert}. The results
provide cautious indications that, despite its simplicity, our
approximation technique can keep pace with, or even surpass, spherical
harmonic interpolation. However, a more definitive assessment will
require extensive further experiments. 

\begin{figure}[ht]
    \centering
    \begin{subfigure}[t]{0.32\textwidth}
        \centering
        \includegraphics[width=\textwidth]{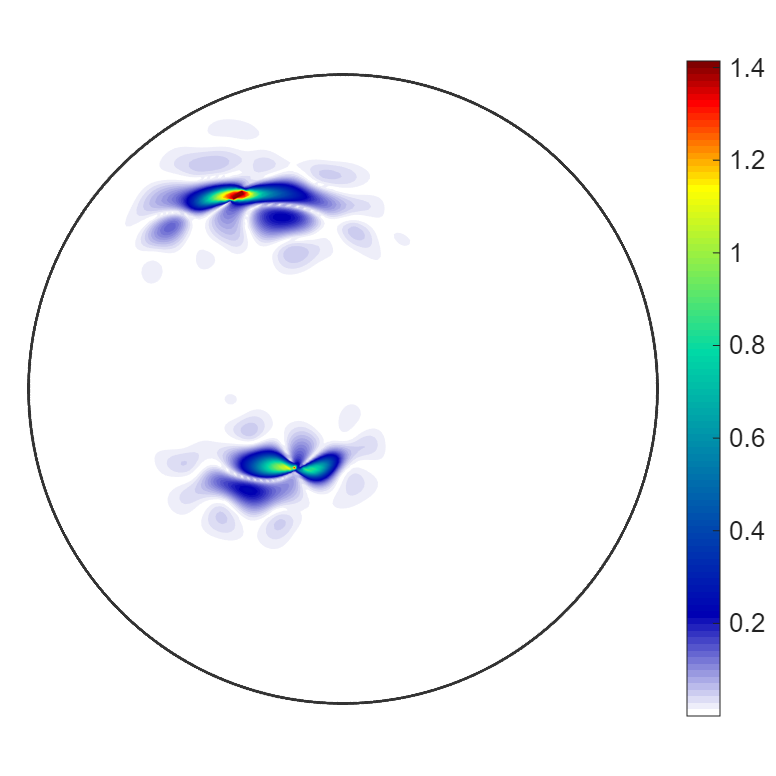}
        \caption{Pointwise error in Frobenius norm of interpolant $I_{X,\cN}f$ for propagation directions in the upper hemisphere.}
    \end{subfigure}
    \hfill
    \begin{subfigure}[t]{0.32\textwidth}
        \centering
        \includegraphics[width=\textwidth]{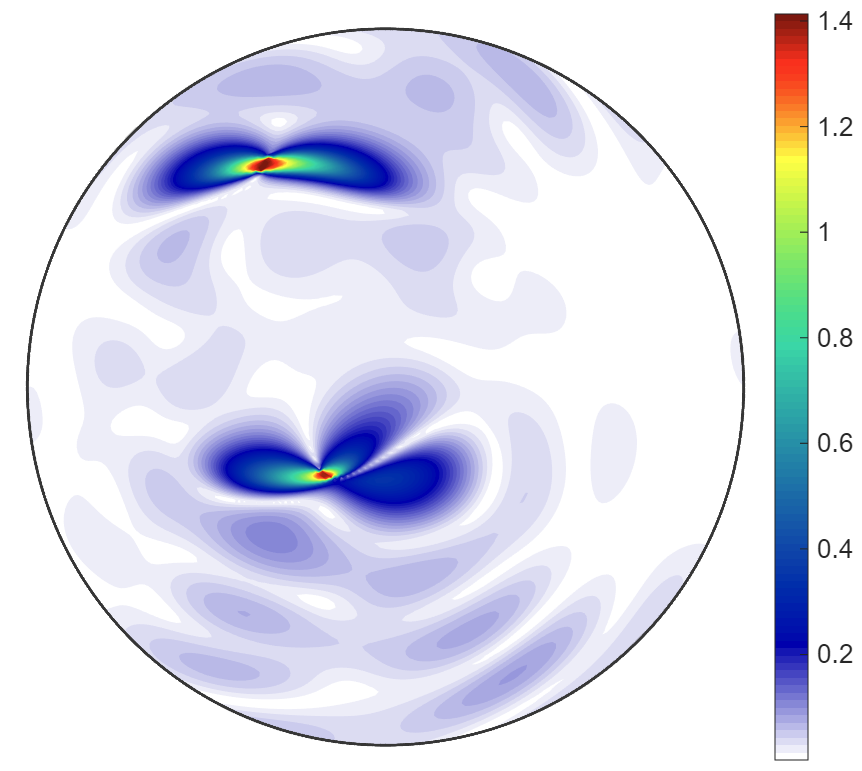}
        \caption{Pointwise error in Frobenius norm of reconstruction of $f$ via spherical harmonics from \cite{HielscherLippert} for propagation directions in the upper hemisphere.}
    \end{subfigure}
    \hfill
    \begin{subfigure}[t]{0.32\textwidth}
        \centering
        \includegraphics[width=\textwidth]{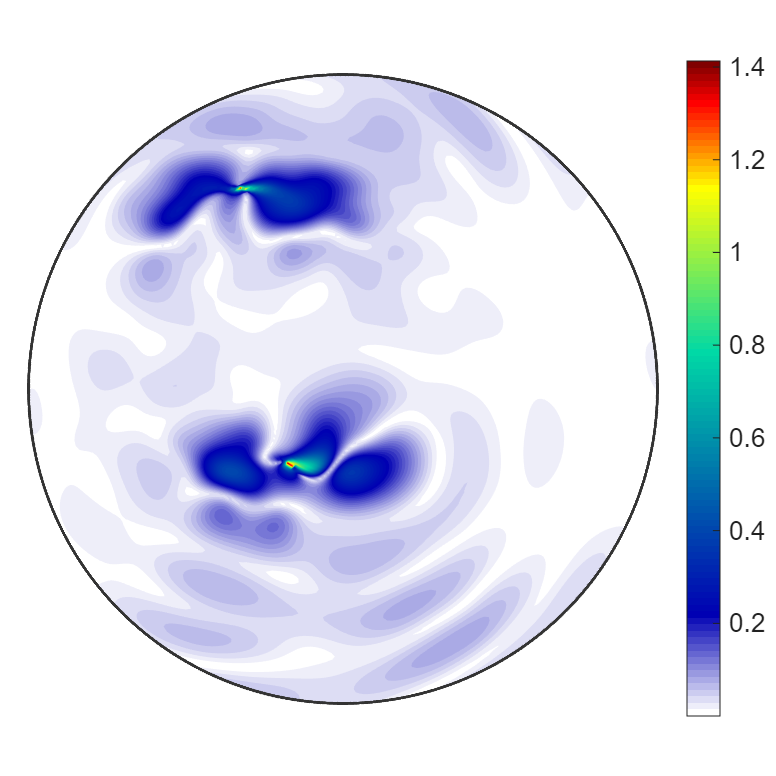}
        \caption{Pointwise Frobenius distance between both approximation methods for propagation directions in the upper hemisphere.}
    \end{subfigure}

    \caption{Reconstructing a function $f:S^2\to \R P^2$ from the analysis of crystalline materials.}
    \label{fig:Vergleich}
\end{figure}

%\appendix

\end{document}